\newtheorem{dummy}{dummy}[section]
\newtheorem{lemma}[dummy]{Lemma}
\newtheorem{theorem}[dummy]{Theorem}
\newtheorem{conjecture}[dummy]{Conjecture}
\newtheorem{corollary}[dummy]{Corollary}
\theoremstyle{definition}
\newtheorem{definition}[dummy]{Definition}
\newtheorem{example}[dummy]{Example}
\newtheorem{remark}[dummy]{Remark}
\newtheorem{caution}[dummy]{Caution}
\newtheorem*{acknowledgements}{Acknowledgements}
\numberwithin{equation}{section}
\newcommand{\pref}{\prettyref}
\newcommand{\DF}{\operatorname{DF}}
\newcommand{\Fun}{\operatorname{Fun}}
\newcommand{\id}{\operatorname{id}}
\newcommand{\Pos}{\operatorname{Pos}}
\newcommand{\Pro}{\operatorname{Pro}}
\newcommand{\Tw}{\operatorname{Tw}}
\newcommand{\cA}{\mathcal{A}}
\newcommand{\cB}{\mathcal{B}}
\newcommand{\cC}{\mathcal{C}}
\newcommand{\cD}{\mathcal{D}}
\newcommand{\cF}{\mathcal{F}}
\newcommand{\cL}{\mathcal{L}}
\newcommand{\cO}{\mathcal{O}}
\newcommand{\cS}{\mathcal{S}}
\newcommand{\cW}{\mathcal{W}}
\newcommand{\bZ}{\mathbb{Z}}
\newcommand{\bfw}{\mathbf{w}}
\newcommand{\bfD}{\mathbf{D}}
\newcommand{\bfE}{\mathbf{E}}
\newcommand{\bfK}{\mathbf{K}}
\newcommand{\bfQ}{\mathbf{Q}}
\newcommand{\frakf}{\mathfrak{f}}
\newcommand{\frakS}{\mathfrak{S}}
\begin{document}

\title{On the abstract wrapped Floer setups}
\author[H.~Morimura]{Hayato Morimura}
\address{Kavli Institute for the Physics and Mathematics of the Universe (WPI),
University of Tokyo,
5-1-5 Kashiwanoha,
Kashiwa,
Chiba,
277-8583,
Japan.}
\email{hayato.morimura@ipmu.jp}


\pagestyle{plain}

\begin{abstract}
The wrapped Fukaya category of a Liouville sector is defined via an axiomatic construction from the associated abstract wrapped Floer setup. In this paper, we propose a modified axiomatic construction, removing the irrelevant choices and the factorization axiom from the abstract wrapped Floer setup. Based on our modification, we reformulate and then prove a conjecture which essentially claims that the wrapped Fukaya category is obtained as the $\infty$-categorical localization along continuation maps. This amounts to compare the two axiomatic constructions.
\end{abstract}

\maketitle

\section{Introduction}
In their seminal works
\cite{GPS1, GPS2},
Ganatra--Pardon--Shende proposed a covariantly functorial construction of the (partially) wrapped Fukaya category of a (stopped) Liouville sector.
Based on the covariant functoriality,
they developed various
interesting
and
useful
theories.
One example is sectorial descent,
which provides us with a powerful local-to-global method to compute the wrapped Fukaya category of a Weinstein sector.
It also has applications in proving homological mirror symmetry
\cite{GJ, GL, Mor}.
See
\cite{GS1, GS2}
for the corresponding gluing method via
\cite{GPS3}
in microlocal sheaf theory.
Another is Viterbo restriction,
with respect to which the wrapped Fukaya category of a Weinstein sector exhibits Zariski-type descent
\cite{PS1, PS2, PS3, MSZ},
playing a key role in the proof of homological mirror symmetry.
See
\cite{Kuw}
for the corresponding gluing method via
\cite{GPS3}
in microlocal sheaf theory.

Their construction of the wrapped Fukaya category is twofold.
First,
to a stopped Liouville sector
$(X, \frakf)$,
one associates a collection of data
$(\cS_{(X, \frakf)}, H \cF^{\text{env}}_{\cS_{(X, \frakf)}}, \tilde{C}_{(X, \frakf)}, \{ \tilde{R}^{(X, \frakf)}_L \}_L )$,
called an abstract wrapped Floer setup,
so that
the assignments
\begin{align*}
(X, \frakf) \mapsto (\cS_{(X, \frakf)}, H \cF^{\text{env}}_{\cS_{(X, \frakf)}}, \tilde{C}_{(X, \frakf)}, \{ \tilde{R}^{(X, \frakf)}_L \}_L )
\end{align*}
are covariantly functorial with respect to suitable morphisms.
Next,
to an abstract wrapped Floer setup
$(\cS, H \cF^{\text{env}}_\cS, \tilde{C}, \{ \tilde{R}_L \}_L )$,
one associates a cofibrant strictly unital $A_\infty$-category
$\cW_{P_{\text{univ}}(\cS), \tilde{C}}$
so that
the assignments
\begin{align*}
(\cS, H \cF^{\text{env}}_\cS, \tilde{C}, \{ \tilde{R}_L \}_L ) \mapsto \cW_{P_{\text{univ}}(\cS), \tilde{C}}
\end{align*}
are covariantly functorial with respect to suitable morphisms.
The former step is geometric
while the latter is axiomatic.
In
\cite[Rem. 2.12]{GPS2},
Ganatra--Pardon--Shende conjectured the existence of a weaker axiomatic framework.
More specifically,
they expected that
one can remove from the latter step
the choices of
envelope
$H \cF^{\text{env}}_\cS$
for
$H \cF^{\text{pre}}_{\cS}$
and
wrapping categories
$\tilde{R}_L$
for all objects
$L$
of the $A_\infty$-pre-category
$\cF^{\text{pre}}_{\cS}$
associated with
$\cS$.

One of the two goals of this paper is to give an affirmative answer to the above conjecture.
Careful tracing of the original construction naturally leads us to a minimal collection of data
$(w \cS, C)$,
which we call a
\emph{weak abstract wrapped Floer setup}.
After refining the axiomatic composability
\cite[Def. 2.10(ii)]{GPS2} to obtain a modification
$w \cS$
of 
$\cS$,
this amounts to couple
$w \cS$
with a certain subset
$C \subset H^0 \cF_{w \cS}$
of the morphisms in canonical envelope
$H \cF_{w \cS}$
for
$H \cF^{\text{pre}}_{w \cS}$.
Note that
by definition
we have
$H \cF_{w \cS} \subset H \cF^{\text{env}}_{\cS}$
as $\bZ$-linear graded categories.
The weak abstract wrapped Floer setup
$(w \cS, C)$
is free from not only the irrelevant choices but also the factorization axiom
\cite[Def. 2.11(iv)]{GPS2}.
As desired,
also for weak abstract wrapped Floer setups there is a covariantly functorial construction of a cofibrant strictly unital $A_\infty$-category
whose cohomology recovers the wrapped Donaldson--Fukaya-type category.

\begin{theorem}\label{thm:main}
There exist assignments
\begin{align} \label{eq:assignF}
w \cS \mapsto \cF^{\text{univ}}_{w \cS}
\end{align}
to cofibrant strictly unital $A_\infty$-categories with the following properties.
\begin{itemize}
\item[(i)]
When coupled with the set
$C \subset H^0 \cF_{w \cS}$
of continuation maps,
the assignments
\pref{eq:assignF}
extend to
\begin{align} \label{eq:assignW}
(w \cS, C) \mapsto \cF^{\text{univ}}_{w \cS}[C^{-1}_{\text{univ}}]
\end{align}
for some set
$C_{\text{univ}} \subset H^0 \cF^{\text{univ}}_{w \cS}$
uniquely determined by
$C$
up to unique isomorphism.
\item[(ii)]
The assignments
\pref{eq:assignW}
define an $\infty$-functor
\begin{align} \label{eq:functor}
N(\operatorname{wAWFS}) \to \cC at^s_{A_\infty}, \
(w \cS, C) \mapsto \cW_{w \cS, C} = \cF^{\text{univ}}_{w \cS}[C^{-1}_{\text{univ}}].
\end{align}
Here,
$N(\operatorname{wAWFS})$
denotes the nerve of the category of weak abstract wrapped Floer setups
and
$\cC at^s_{A_\infty}$
denotes the $\infty$-category of strictly unital $A_\infty$-categories.
\item[(iii)]
There exist canonical equivalences
\begin{align*}
H \cW_{w \cS, C} \simeq H \cW_{\DF, w \cS, C}
\end{align*}
to the weak wrapped Donaldson--Fukaya categories in the sense of Section
$4.3$,
which are compatible with the $\infty$-functor
\pref{eq:functor}.
\end{itemize}
\end{theorem}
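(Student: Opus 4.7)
The overall strategy is to mimic the axiomatic Ganatra--Pardon--Shende construction with the weaker input $(w\cS, C)$ in place of $(\cS, H\cF^{\text{env}}_{\cS}, \tilde{C}, \{\tilde{R}_L\}_L)$, isolating at each step the place where the pre-chosen envelope or the factorization axiom was used, and supplying a canonical substitute extracted from $w\cS$ alone. The output of \pref{eq:assignF} will be a cofibrant strictly unital $A_\infty$-category $\cF^{\text{univ}}_{w\cS}$ constructed as a homotopy colimit of $A_\infty$-pre-categories over a universal poset $P_{\text{univ}}(w\cS)$ indexing coherent wrappings of finite tuples of Lagrangians in $w\cS$.

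The first step is to verify that the refined composability axiom in the definition of $w\cS$ produces enough transverse tuples in $\cF^{\text{pre}}_{w\cS}$ to run the pre-category-to-category machine, so that one obtains a well-defined $A_\infty$-category whose morphisms on cohomology reproduce the canonical envelope $H\cF_{w\cS}\subset H\cF^{\text{env}}_{\cS}$. For (i), I would define $C_{\text{univ}}\subset H^{0}\cF^{\text{univ}}_{w\cS}$ as the preimage of $C$ under the tautological cohomology-level comparison map; uniqueness up to unique isomorphism then follows because any two cofibrant models of $\cF^{\text{univ}}_{w\cS}$ are related by a quasi-equivalence in a contractible space of choices. The Dwyer--Kan localization $\cF^{\text{univ}}_{w\cS}[C_{\text{univ}}^{-1}]$ is subsequently formed inside $\cC at^{s}_{A_\infty}$.

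For (ii), a morphism $(w\cS,C)\to(w\cS',C')$ of weak abstract wrapped Floer setups induces, by naturality of every construction above, a strictly unital $A_\infty$-functor between the universal categories that carries $C$ into $C'$ and hence descends to the localizations. Coherence on higher simplices in $N(\operatorname{wAWFS})$ is then established via the standard coherent-nerve formalism, yielding the $\infty$-functor \pref{eq:functor}. For (iii), the equivalence $H\cW_{w\cS,C}\simeq H\cW_{\DF,w\cS,C}$ reduces to the fact that passing to $H^{0}$ commutes with Dwyer--Kan localization along continuation maps, together with the tautological identification of $H\cF^{\text{univ}}_{w\cS}$ with $H\cF_{w\cS}$ on morphism spaces, after which compatibility with \pref{eq:functor} is automatic from the naturality established in (ii).

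The main obstacle I anticipate is the construction of $\cF^{\text{univ}}_{w\cS}$ itself. Without the factorization axiom, the coherent composition data that Ganatra--Pardon--Shende extracted from their chosen envelope must be rebuilt inductively up the filtration of $P_{\text{univ}}(w\cS)$, showing at each stage that the relevant higher $A_\infty$ operations in the canonical envelope $H\cF_{w\cS}$ are uniquely determined by transverse-intersection data already present in $w\cS$. A careful auditing of which uses of the factorization axiom in the original argument are essential rather than merely convenient, and the provision of an explicit replacement using only the refined composability of $w\cS$, is therefore the heart of the argument; once this is in hand the localization, functoriality, and Donaldson--Fukaya comparison proceed along the same lines as in the strong setting.
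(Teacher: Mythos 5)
Your plan defers exactly the step the paper has to supply, and the route you sketch for that step is the one the weakened axioms obstruct. You propose to build $\cF^{\text{univ}}_{w \cS}$ over a universal decorated poset and to rebuild the coherent composition data ``inductively up the filtration,'' but in the original machine the poset model computes the wrapped Donaldson--Fukaya category only through sufficiently wrapped posets, and the proof that every countable cofinite decorated poset embeds into a sufficiently wrapped one (\pref{lem:2.18}) uses the factorization axiom in an essential way: it is what lets one interleave a cofinal wrapping sequence with transversality against the finitely many tuples already present. Since $(w \cS, C)$ has no factorization axiom, that induction cannot be run, and you offer no substitute beyond saying that a ``careful auditing'' is needed. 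The paper's actual construction avoids posets altogether: conditions (i)--(iv) of Definition \pref{dfn:wrapped} make $C$ a right multiplicative system of $H \cF_{w \cS}$, which by \pref{lem:wrapping} yields the wrapping categories $R_L$ with the right locality property without any factorization input; the building blocks are the canonical envelopes $\cF_{w \cS, \delta}$ (whose objects are the Lagrangians themselves, for a fixed compatible collection $\delta$ of Floer data), and the dependence on $\delta$ is removed by taking the colimit over the category $\bfE_{w \cS}$ of entanglements, where conditions (v)--(vi) enter to show the comparison edges are quasi-equivalences (\pref{lem:bridge1}, \pref{lem:canonical}).

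Two further claims you rely on are not true as stated. First, part (iii) does not ``reduce to the fact that passing to $H^0$ commutes with Dwyer--Kan localization along continuation maps'': the cohomology category of an $A_\infty$-localization is not in general the naive localization of the cohomology category, and this is precisely where the work lies. The paper's \pref{lem:cohomology} proves $H \cW_{w \cS, C_\delta} \simeq H \cW_{\DF, w \cS, C}$ by choosing a cofinal sequence in $R_L$ (condition (vi)), using the right locality property, and applying \pref{thm:5.14} to identify the colimits of morphism complexes before and after localization. Second, there is no ``tautological identification of $H \cF^{\text{univ}}_{w \cS}$ with $H \cF_{w \cS}$'': any universal source object necessarily duplicates each Lagrangian over the choices of Floer data, and one needs condition (v) of Definition \pref{dfn:wrapped} to see that the duplicates become isomorphic after localization, which is the content of \pref{lem:bridge1}. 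As written, the proposal leaves the central object unconstructed and rests (i)--(iii) on a commutation claim that fails in general, so it has genuine gaps.
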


\begin{remark}
As we will explain in Section
$4.3$,
essentially we do not add anything to the original definition.
Hence,
even after our modification,
usual geometric data still give sufficient input to define the wrapped Fukaya categories.
\end{remark}

The other goal is to give an affirmative answer to the following conjecture.

\begin{conjecture}[{\cite[Conj. 2.23]{GPS2}}] \label{conj:2.23}
For
any $A_\infty$-category
$\cC$
and
suitably defined
$\Fun(\cF^{\text{pre}}_\cS, \cC)$,
the restriction functor
\begin{align*}
\Fun(\cW_{P_{\text{univ}}(\cS), \tilde{C}}, \cC) \to \Fun(\cF^{\text{pre}}_\cS, \cC) 
\end{align*}
is fully faithful
and
its essential image consists of those functors
which send continuation maps to isomorphisms in
$\cC$.
\end{conjecture}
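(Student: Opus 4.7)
The plan is to deduce the conjecture from \pref{thm:main}, which already presents $\cW_{w\cS, C} = \cF^{\text{univ}}_{w\cS}[C_{\text{univ}}^{-1}]$ as an explicit $\infty$-categorical localization of a cofibrant strictly unital $A_\infty$-category. As a preliminary step, I would verify that the $A_\infty$-category $\cW_{P_{\text{univ}}(\cS), \tilde{C}}$ produced by the Ganatra--Pardon--Shende construction is canonically quasi-equivalent to $\cW_{w\cS, C}$, where $w\cS$ is the weak setup naturally extracted from $\cS$ and $C$ is the image of $\tilde{C}$ in $H^0 \cF_{w\cS}$. This comparison, functorial in morphisms of setups, is the bridge between the two axiomatic frameworks that the paper sets out to cross; once in hand it transports the conjecture into an equivalent statement about restriction along the canonical composition $\cF^{\text{pre}}_\cS \to \cF^{\text{univ}}_{w\cS} \to \cF^{\text{univ}}_{w\cS}[C_{\text{univ}}^{-1}]$.

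With this identification granted, the conjecture splits into two assertions. The first is the universal property of the $\infty$-categorical localization: the pullback
\begin{align*}
\Fun(\cF^{\text{univ}}_{w\cS}[C_{\text{univ}}^{-1}], \cC) \longrightarrow \Fun(\cF^{\text{univ}}_{w\cS}, \cC)
\end{align*}
is fully faithful with essential image the $C_{\text{univ}}$-inverting functors. This is formal from the defining property of the $\infty$-localization in $\cC at^s_{A_\infty}$ supplied by \pref{thm:main}(i), together with the cofibrancy of $\cF^{\text{univ}}_{w\cS}$. The second is that pullback along $\cF^{\text{pre}}_\cS \hookrightarrow \cF^{\text{univ}}_{w\cS}$ induces an equivalence onto the appropriately defined $\Fun(\cF^{\text{pre}}_\cS, \cC)$, under which inverting $C_{\text{univ}}$ matches inverting continuation maps; this latter matching is tautological from the construction of $C_{\text{univ}}$ in \pref{thm:main}(i).

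I expect the second assertion to be the main obstacle, since it forces a commitment to a precise definition of $A_\infty$-functor out of a pre-category. The natural choice, dictated by the universal closure construction of $\cF^{\text{univ}}_{w\cS}$, is to define $\Fun(\cF^{\text{pre}}_\cS, \cC)$ as the homotopy limit, indexed by admissible tuples of objects, of the ordinary $A_\infty$-functor categories from the associated honest $A_\infty$-subcategories into $\cC$. With this definition one then inductively identifies each stage of the free extension building $\cF^{\text{univ}}_{w\cS}$ from $\cF^{\text{pre}}_\cS$ as a homotopy pushout along a generating cofibration, and checks that strict unitality is preserved at every stage. The bulk of the work lies in coherently bookkeeping the bar-type resolutions involved and in matching the inductive extension with the diagram indexing the homotopy limit. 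Once this equivalence is set up, functoriality in $(w\cS, C)$ supplied by \pref{thm:main}(ii) promotes it to the $\infty$-functorial form of the conjecture.
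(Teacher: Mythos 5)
There is a genuine gap: the step you label as a ``preliminary'' verification --- that $\cW_{P_{\text{univ}}(\cS), \tilde{C}}$ is canonically quasi-equivalent to $\cW_{w \cS, C}$ compatibly with the universal properties of the two localizations --- is not a preliminary at all; it is the entire substance of the paper's resolution of the conjecture. The paper reformulates \pref{conj:2.23} precisely as \pref{thm:comparison}, and its proof occupies Section 5: one must relate the GPS side, built from the universal decorated poset $P_{\text{univ}}(\cS)$ and the localization $\cO_{P_{\text{univ}}(\cS)}[I^{-1}_{P_{\text{univ}}(\cS), \tilde{C}}]$, to the entanglement side, built from $\cF^{\text{univ}}_{w \cS}$ and $C_{\text{univ}}$. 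This is done by constructing, for each compatible collection $\delta$ of Floer data, the functor $\iota_{P_\delta} \colon \cO_{P_\delta} \to \cF_{E_\delta}$ over all sufficiently wrapped $P_\delta$, and proving that the induced $\tau_\delta \colon \cW_{P_{\text{univ}, \delta}(\cS), \tilde{C}} \to \cW_{E_\delta, C}$ is a quasi-equivalence (\pref{lem:bridge2}); this uses $P$-wrapping sequences, the right locality property, \pref{lem:2.16}, \pref{lem:2.17}, \pref{lem:2.18}, and the colimit computation of morphisms in localizations (\pref{thm:5.14}), then \pref{lem:lift} and \pref{lem:3.46} to transport the universal property. None of this appears in your proposal, and your first ``assertion'' (the universal property of $\cF^{\text{univ}}_{w \cS}[C^{-1}_{\text{univ}}]$) is indeed formal --- so the proposal assumes exactly the hard part and proves only the easy one.

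Your second assertion also goes down a route the paper deliberately avoids, and as described it would not go through. There is no inclusion $\cF^{\text{pre}}_\cS \hookrightarrow \cF^{\text{univ}}_{w \cS}$ to restrict along: $\cF^{\text{univ}}_{w \cS}$ is a colimit over entanglements, in which each Lagrangian appears in many duplicate copies indexed by Floer-data collections $\delta$ and copies of $0$-entanglements, and it is not obtained from $\cF^{\text{pre}}_\cS$ by attaching generating cofibrations; the whole point of the phrase ``suitably defined $\Fun(\cF^{\text{pre}}_\cS, \cC)$'' is that functors out of a pre-category have no intrinsic meaning. The paper resolves this by fiat, setting $\Fun(\cF^{\text{pre}}_\cS, \cC) \coloneqq \Fun^u_{A_\infty}(\cF^{\text{univ}}_{w \cS}, \cC)$ with $\cC$ unital, rather than by your homotopy-limit-over-admissible-tuples definition followed by an inductive cell-attachment comparison; the latter is both unsubstantiated (the claimed identification of each stage as a homotopy pushout is not established, nor is the agreement of the homotopy limit with $\Fun^u_{A_\infty}(\cF^{\text{univ}}_{w \cS}, \cC)$) and, even if carried out, would still leave untouched the comparison with the GPS category $\cW_{P_{\text{univ}}(\cS), \tilde{C}}$ that the conjecture is actually about.
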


Suppose that
$\cF^{\text{pre}}_\cS$
was a cofibrant unital $A_\infty$-category
and
$\cW_{P_{\text{univ}}(\cS), \tilde{C}}$
was its $\infty$-categorical localization
$\cF^{\text{pre}}_\cS[\tilde{C}^{-1}_\circ]$
along the set
$\tilde{C}_\circ \subset H^0 \cF^{\text{pre}}_\cS$
of continuation maps.
Then,
due to
\cite{OT25},
for any unital $A_\infty$-category
$\cD$
the pullback would give a unital $A_\infty$-functor
\begin{align*}
\Fun^u_{A_\infty}(\cW_{P_{\text{univ}}(\cS), \tilde{C}}, \cD) \to \Fun^u_{A_\infty}(\cF^{\text{pre}}_\cS, \cD) 
\end{align*}
between unital $A_\infty$-categories of unital $A_\infty$-functors with universal property:
it is fully faithful
and
its essential image consists of those functors
which send all representatives of any element in
$\tilde{C}_\circ$
to isomorphisms in
$\cD$.
As
$\cW_{P_{\text{univ}}(\cS), \tilde{C}}$
is the $\infty$-categorical localization
$\cO_{P_{\text{univ}}(\cS)}[I^{-1}_{P_{\text{univ}}(\cS), \tilde{C}}]$,
Conjecture
\pref{conj:2.23}
would be trivial
if we set
\begin{align*}
\Fun(\cF^{\text{pre}}_\cS, \cC) \coloneqq \Fun^u_{A_\infty}(\cO_{P_{\text{univ}}(\cS)}, \cC)
\end{align*}
and
assume
$\cC$
to be unital.
Here,
$I_{P_{\text{univ}}(\cS), \tilde{C}} \subset H^0 \cO_{P_{\text{univ}}(\cS)}$
is obtained by
somehow universally duplicating
$\tilde{C}$.
In other words,
$\cW_{P_{\text{univ}}(\cS), \tilde{C}}$
would be the $\infty$-categorical localization along universal duplicates of continuation maps.

Now,
one can consider the same thing as in the above paragraph replacing
$\cF^{\text{pre}}_\cS, \cW_{P_{\text{univ}}(\cS), \tilde{C}}$
with
$\cF^{\text{univ}}_{w \cS}, \cW_{w \cS, C}$,
where here
$\cW_{w \cS, C}$
is the $\infty$-categorical localization of the cofibrant (strictly) unital $A_\infty$-category
$\cF^{\text{univ}}_{w \cS}$
along
$C_{\text{univ}}$.
Hence it remains to compare the two axiomatic constructions.
If we
set
$\Fun(\cF^{\text{pre}}_\cS, \cC) \coloneqq \Fun^u_{A_\infty}(\cF^{\text{univ}}_{w \cS}, \cC)$
and
assume
$\cC$
to be unital,
then Conjecture
\pref{conj:2.23}
could be reformulated as

\begin{theorem} \label{thm:comparison}
Let
$(w \cS, C)$
be
a weak abstract wrapped Floer setup
and 
$(\cS, H \cF^{\text{env}}_\cS, \tilde{C}, \{ \tilde{R}_L \}_L)$
an abstract wrapped Floer setup with
$\tilde{C} |_{H^0 \cF_{w \cS}} = C$.
Then there exists a quasi-equivalence
\begin{align*}
\bar{\tau} \colon \cW_{P_{\text{univ}}(\cS), \tilde{C}} \xrightarrow{qeq} \cW_{w \cS, C}
\end{align*}
such that
for any unital $A_\infty$-category
$\cD$
the pullback induces a zigzag of unital $A_\infty$-functors
\begin{align*}
\Fun^u_{A_\infty}(\cW_{P_{\text{univ}}(\cS), \tilde{C}}, \cD)
\xleftarrow{qeq}
\Fun^u_{A_\infty}(\cW_{w \cS, C}, \cD)
\hookrightarrow
\Fun^u_{A_\infty}(\cF^{\text{univ}}_{w \cS}, \cD)
\end{align*}
with universal property:
it is fully faithful
and
its essential image consists of those functors
which send all representatives of any element in
$C$
to isomorphisms in
$\cD$.
\end{theorem}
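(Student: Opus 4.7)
The strategy is to reduce the statement to the $\infty$-categorical localization theorem of \cite{OT25}. Since $\cW_{w\cS, C} = \cF^{\text{univ}}_{w\cS}[C^{-1}_{\text{univ}}]$ is by construction the $\infty$-categorical localization of the cofibrant strictly unital $A_\infty$-category $\cF^{\text{univ}}_{w\cS}$ along $C_{\text{univ}}$, the right-hand embedding
\begin{align*}
\Fun^u_{A_\infty}(\cW_{w\cS, C}, \cD) \hookrightarrow \Fun^u_{A_\infty}(\cF^{\text{univ}}_{w\cS}, \cD)
\end{align*}
is exactly pullback along this localization, so its full faithfulness and the characterization of its essential image are immediate from \cite{OT25}. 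The theorem therefore reduces to producing the quasi-equivalence $\bar\tau$: once $\bar\tau$ is a quasi-equivalence between cofibrant unital $A_\infty$-categories, pullback along it induces a quasi-equivalence on the corresponding unital functor categories, giving the left arrow of the zigzag.

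To construct $\bar\tau$, I would first lift it to a comparison $A_\infty$-functor $\tau : \cF^{\text{univ}}_{w\cS} \to \cO_{P_{\text{univ}}(\cS)}$ between the two pre-localization models and then descend to the localizations. The hypothesis $\tilde{C}|_{H^0 \cF_{w\cS}} = C$, combined with the tautological inclusion $H\cF_{w\cS} \subset H\cF^{\text{env}}_\cS$ of $\bZ$-linear graded categories, is exactly what makes $\tau$ natural: both universal constructions are telescopes over a diagram of wrapping data, and on the common objects and continuation morphisms the two diagrams are canonically identified. Concretely, I would match the universal poset $P_{\text{univ}}(\cS)$ of the original construction with the diagram implicit in $\cF^{\text{univ}}_{w\cS}$, observing that the removed data --- the envelope $H\cF^{\text{env}}_\cS$ and the wrapping categories $\tilde{R}_L$ --- only encodes coherent bookkeeping that becomes redundant after inverting continuation maps. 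The functor $\tau$ is then defined object-wise by sending a generic transversal tuple to the corresponding tuple in $\cO_{P_{\text{univ}}(\cS)}$ and extending on morphism complexes via the canonical inclusion, with $A_\infty$-coherence guaranteed by cofibrancy on both sides.

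The induced map $\bar\tau : \cW_{P_{\text{univ}}(\cS), \tilde{C}} \to \cW_{w\cS, C}$ is then shown to be a quasi-equivalence on cohomology. By Theorem \ref{thm:main}(iii) the target has $H^0$ equal to $H\cW_{\DF, w\cS, C}$, while by the original \cite{GPS2} construction the source has $H^0$ equal to the wrapped Donaldson--Fukaya category of $\cS$ determined by $\tilde{C}$; these agree under $\tilde{C}|_{H^0\cF_{w\cS}} = C$ because in both settings Donaldson--Fukaya morphism groups are computed as colimits over a common cofinal system of continuation maps. Verifying that $H\bar\tau$ realizes this identification and is full on higher degrees is the technical core of the argument. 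The main obstacle is precisely this cohomology-level identification: one must show at the chain level that discarding the envelope, the wrapping categories $\tilde{R}_L$, and the factorization axiom --- the data removed when passing from $\cS$ to $w\cS$ --- produces no new morphism classes after localization, so that the universal duplicates $I_{P_{\text{univ}}(\cS), \tilde{C}}$ and $C_{\text{univ}}$ encode equivalent information. Organizing this comparison cleanly, possibly by factoring $\tau$ through an intermediate model built from $P_{\text{univ}}(\cS)$ but using only the canonical envelope $H\cF_{w\cS}$, is where most of the bookkeeping is concentrated.
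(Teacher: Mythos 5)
Your overall skeleton (get the right-hand embedding from the universal property of the localization $\cF^{\text{univ}}_{w \cS} \to \cF^{\text{univ}}_{w \cS}[C^{-1}_{\text{univ}}]$ in the sense of \cite{OT25}, then reduce everything to producing the quasi-equivalence $\bar{\tau}$ and pull back along it via cofibrancy) matches the paper, but the construction of the comparison functor has a genuine gap, and the step you yourself flag as ``the technical core'' is exactly the part the paper has to prove and you do not. First, the direction of your chain-level comparison is wrong: you propose a strict functor $\tau \colon \cF^{\text{univ}}_{w \cS} \to \cO_{P_{\text{univ}}(\cS)}$ defined ``via the canonical inclusion'' of morphism complexes, but no such inclusion exists. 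In $\cO_{P}$ the morphism complex between objects $p, q$ vanishes unless $p > q$ in the poset, while in $\cF_{E}$ (hence in $\cF^{\text{univ}}_{w \cS}$) one has $CF(L_p, L_q)$ for every composable pair, symmetrically by the axioms of $w \cS$; moreover the objects of $\cO_{P_{\text{univ}}(\cS)}$ are isomorphism classes of decorated posets, not Lagrangians, so ``sending a tuple to the corresponding tuple'' is not well defined. The paper's naive strictly unital functor goes the other way, $\iota_{P_\delta} \colon \cO_{P_\delta} \to \cF_{E_\delta}$, and it is explicitly remarked there that in general no $A_\infty$-functor exists in your direction. Your direction is also inconsistent with the $\bar{\tau}$ you then claim it induces: a functor $\cF^{\text{univ}}_{w \cS} \to \cO_{P_{\text{univ}}(\cS)}$ would descend (if at all) to $\cW_{w \cS, C} \to \cW_{P_{\text{univ}}(\cS), \tilde{C}}$, not to the stated $\bar{\tau}$. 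Finally, ``$A_\infty$-coherence guaranteed by cofibrancy'' is not an argument: cofibrancy lets one lift an equivalence of cohomology categories to a unital $A_\infty$-functor (Lemma \ref{lem:lift}), but only after that cohomological equivalence has been established, which is precisely what is at stake.

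Second, the quasi-equivalence itself is asserted rather than proved. The paper restricts to the sufficiently wrapped posets $P_\delta$ whose Floer data come from a fixed compatible collection $\delta$, forms $P_{\text{univ}, \delta}(\cS)$, and proves (Lemma \ref{lem:bridge2}) that $\tau_\delta \colon \cW_{P_{\text{univ}, \delta}(\cS), \tilde{C}} \to \cW_{E_\delta, C}$ is a quasi-equivalence by combining $P_\delta$-wrapping sequences with Lemma \ref{lem:2.16}, Lemma \ref{lem:2.17} and the colimit formula of Theorem \ref{thm:5.14}, plus a separate essential surjectivity argument; it then bridges $P_{\text{univ}, \delta}(\cS)$ with the full $P_{\text{univ}}(\cS)$ via the canonical quasi-equivalence coming from Lemma \ref{lem:2.21}, and obtains the functor out of $\cF^{\text{univ}}_{w \cS}$ in the needed direction by lifting the cohomological inverse of $\bar{\tau}_\delta$ through Lemma \ref{lem:lift} before invoking Lemma \ref{lem:3.46} and Theorem \ref{thm:5.9} for the universal property. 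None of these ingredients (the $\delta$-restricted universal poset, the wrapping-sequence computation, the treatment of essential surjectivity, the passage from $C_{\text{univ}}$ back to $C$ in the essential-image statement) appear in your proposal, so as written it does not constitute a proof.
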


\begin{remark}
The existence of some noncanonical fully faithful functor
$\Fun^u_{A_\infty}(\cW_{P_{\text{univ}}(\cS), \tilde{C}}, \cD)
\hookrightarrow
\Fun^u_{A_\infty}(\cF^{\text{univ}}_{w \cS}, \cD)$
easily follows from
\cite[Prop. 3.47]{Tan}
once we find an equivalence
$H \cW_{\DF, \cS, \tilde{C}} \simeq H \cW_{\DF, w \cS, C}$.
A little more nontrivial part is to guarantee the property of its essential image.
\end{remark}

Passing to the $\infty$-category
$\cC at^u_{A_\infty}$
of unital $A_\infty$-categories,
which by
\cite[Thm. A]{COS}
is canonically equivalent to $\cC at^s_{A_\infty}$,
we obtain the following refinement.

\begin{corollary} \label{cor:comparison}
In the same setting as in
\pref{thm:comparison},
there exists a canonical equivalence
\begin{align*}
\bar{\tau} \colon \cW_{P_{\text{univ}}(\cS), \tilde{C}} \xrightarrow{\sim} \cW_{w \cS, C}
\end{align*}
in
$\cC at^u_{A_\infty}$
such that
for any
$\cD \in \cC at^u_{A_\infty}$
the pullback induces a canonical $\infty$-functor
\begin{align*}
\hom_{\cC at^u_{A_\infty}}(\cW_{P_{\text{univ}}(\cS), \tilde{C}}, \cD)
\xrightarrow{\sim}
\hom_{\cC at^u_{A_\infty}}(\cW_{w \cS, C}, \cD)
\hookrightarrow
\hom_{\cC at^u_{A_\infty}}(\cF^{\text{univ}}_{w \cS}, \cD)
\end{align*}
with universal property:
it is fully faithful
and
its essential image consists of those functors
which send all representatives of any element in
$C$
to equivalence in
$\cD$.
\end{corollary}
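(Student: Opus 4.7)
The plan is to deduce the corollary from Theorem~\ref{thm:comparison} by transporting its conclusions across the equivalence $\cC at^u_{A_\infty} \simeq \cC at^s_{A_\infty}$ of \cite[Thm. A]{COS}. First, I would invoke Theorem~\ref{thm:comparison} to obtain the quasi-equivalence $\bar{\tau}\colon \cW_{P_{\text{univ}}(\cS), \tilde{C}} \to \cW_{w \cS, C}$. Quasi-equivalences of unital $A_\infty$-categories are precisely the weak equivalences inverted in forming $\cC at^u_{A_\infty}$, so $\bar{\tau}$ represents a canonical equivalence there. Canonicality is inherited from its construction, which depends only on the matching $\tilde{C}|_{H^0 \cF_{w \cS}} = C$ and not on any further auxiliary choice.

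Next, I would identify the mapping $\infty$-groupoid $\hom_{\cC at^u_{A_\infty}}(\cA, \cB)$ with (the maximal Kan complex inside) the nerve of the unital $A_\infty$-functor category $\Fun^u_{A_\infty}(\cA, \cB)$, which is the standard model under the equivalence $\cC at^u_{A_\infty} \simeq \cC at^s_{A_\infty}$. Applying $\hom_{\cC at^u_{A_\infty}}(-, \cD)$ to the zigzag of Theorem~\ref{thm:comparison} then yields the desired canonical composition: the first arrow becomes an equivalence of $\infty$-groupoids because pullback along the quasi-equivalence $\bar{\tau}$ is itself a quasi-equivalence between the corresponding unital $A_\infty$-functor categories; the second arrow remains a fully faithful embedding because fully faithfulness of a unital $A_\infty$-functor descends to fully faithfulness of the induced map of $\infty$-groupoids. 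The essential image description transports verbatim, since an $A_\infty$-isomorphism in $\cD$ is precisely an equivalence in $\cD$ regarded as an object of $\cC at^u_{A_\infty}$, and the property of sending representatives of elements of $C$ to such equivalences is stable under the passage to mapping $\infty$-groupoids.

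The main obstacle I anticipate is not conceptual but model-theoretic bookkeeping: one must carefully verify that $\Fun^u_{A_\infty}$ genuinely computes mapping $\infty$-groupoids in $\cC at^u_{A_\infty}$ with the expected compatibilities with fully faithfulness, essential image, and the composability of the zigzag into a single $\infty$-functor. Once this dictionary is in place, Corollary~\ref{cor:comparison} follows as a direct $\infty$-categorical shadow of Theorem~\ref{thm:comparison}.
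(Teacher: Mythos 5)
Your overall strategy—transport the zigzag of Theorem~\ref{thm:comparison} across the equivalence $\cC at^s_{A_\infty} \simeq \cC at^u_{A_\infty}$ of \cite[Thm.~A]{COS}, using the identification of mapping spaces with (Kan replacements of) nerves of $\Fun^u_{A_\infty}$—is the right one and matches the paper's intent. The preservation of fully faithfulness and of the essential image description under this transport is also handled correctly.

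However, there is a genuine gap in your justification of \emph{canonicality}. You assert that the construction of $\bar{\tau}$ ``depends only on the matching $\tilde{C}|_{H^0 \cF_{w \cS}} = C$ and not on any further auxiliary choice.'' That is false. Tracing the proof of Theorem~\ref{thm:comparison} in Section 5, the strict representative of $\bar{\tau}$ is built from several auxiliary choices: a compatible collection $\delta$ of Floer data, an inclusion $E_\delta \subset E$ into some entanglement, a strict quasi-equivalence representing $\cF^{\text{univ}}_{w \cS}[C^{-1}_{\text{univ}}] \to \cW_{w \cS, C}$ as in \pref{lem:canonical}, the lift through \pref{lem:lift}, and the chosen inverse $\text{GPS}^{-1}$. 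None of these are determined by the data $(w\cS,C)$ and $(\cS, H\cF^{\text{env}}_\cS, \tilde{C}, \{\tilde{R}_L\})$ alone. The correct argument—and the one the paper gives in its closing remark—is that although such choices must be made to produce a strict zigzag, any two choices yield the same edge in $\cC at^u_{A_\infty}$ up to homotopy, by \pref{lem:canonical}, \pref{lem:bridge1}, and the basic contractibility of choice spaces in the $\infty$-categorical localization. It is this washing-out of choices, not their absence, that yields canonicality in $\cC at^u_{A_\infty}$. Your proposal skips this verification, which is the actual content of the corollary beyond Theorem~\ref{thm:comparison}.
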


Finally,
the sense becomes clear in
which the wrapped Fukaya category is obtained as the localization along continuation maps.
It might be an interesting problem to
construct the $B$-side counterparts of
$\cF^{\text{univ}}_{w \cS}$
and
continuation maps to lift homological mirror symmetry. 

\begin{acknowledgements}
The author was supported by
JSPS KAKENHI Grant Number
JP23KJ0341.
He would like to thank John Pardon for explaining about
the universal decorated poset
and
the proof of
\cite[Prop. 2.21]{GPS2}.
He is grateful to
Denis Auroux for a helpful suggestion
and
an anonymous referee for careful reading.
\end{acknowledgements}

\section{$\infty$-categorical quotients and localizations of $A_\infty$-categories}
In this section,
we review the construction of
$\infty$-categorical
quotients
and
localizations
of $A_\infty$-categories.
We denote by
$A_\infty Cat$
the ordinary category of small $\bZ$-linear
$A_\infty$-categories
and
$A_\infty$-functors.
For the rest of the paper,
we will assume that
all
$A_\infty$-categories
and
$A_\infty$-functors
belong to
$A_\infty Cat$.

\subsection{Unitalities}
One advantage to work within the $\infty$-category of $A_\infty$-categories is the equivalence of various unitalities.
Recall that
an $A_\infty$-category
$\cA$
is
\emph{strictly unital}
if it admits a closed degree
$0$
element
$\id_X \in \cA(X, X)$
such that
the chain maps
\begin{align*}
\mu^2_\cA(\id_X, -) \colon \cA(Y, X) \to \cA(Y, X), \
\mu^2_\cA(-, \id_X) \colon \cA(X, Y) \to \cA(X, Y)
\end{align*}
are the identities
for all
$X, Y \in \cA$
and
\begin{align*}
\mu^k_\cA(\id_X \otimes \cdots)
=
\cdots
=
\mu^k_\cA(\cdots \otimes \id_X \otimes \cdots)
=
\cdots
=
\mu^k_\cA(\cdots \otimes \id_X)
=
0
\end{align*}
whenever
$k > 1$
for all
$X \in \cA$.
We call such an element
\emph{strict unit}.
An $A_\infty$-functor
$F \colon \cA \to \cB$
is
\emph{strictly unital}
if
$\cA, \cB$
are strictly unital
and
we have
\begin{align*}
F^1(\id_X) = \id_{F^0(X)}, \
F^k(\id_X \otimes \cdots)
=
\cdots
=
F^k(\cdots \otimes \id_X \otimes \cdots)
=
\cdots
=
F^k(\cdots \otimes \id_X)
=
0
\end{align*}
for all
$X \in \cA$
and
$k > 1$.

Recall that
an $A_\infty$-category
$\cA$
is
\emph{cohomologically unital}
if its cohomology category
$H \cA$
is unital.
We call a closed degree
$0$
element
$e_X \in \cA(X, X)$
representing the unit in
$H \cA(X, X)$
a
\emph{cohomological unit}
of
$X$.
An $A_\infty$-functor
$F \colon \cA \to \cB$
is
\emph{cohomologically unital}
if
$\cA, \cB$
are cohomologically unital
and
$HF$
sends units to units.
Cohomologically unital
$A_\infty$-categories
and
$A_\infty$-functors
have the following refinements.

\begin{definition}[{\cite[Def. 7.3, 8.1, Prop. 7.4, 8.2]{Lyu03}}]
An $A_\infty$-category
$\cA$
is
\emph{unital}
if
it is cohomologically unital
and
the chain maps
\begin{align*}
\mu^2_\cA(e_X, -) \colon \cA(Y, X) \to \cA(Y, X), \
\mu^2_\cA(-, e_X) \colon \cA(X, Y) \to \cA(X, Y)
\end{align*}
are homotopic to the identities for all
$X, Y \in \cA$.
An $A_\infty$-functor
$F \colon \cA \to \cB$
is
\emph{unital}
if
$\cA, \cB$
are unital
and
$HF$
sends units to units.
\end{definition}

In fact,
over a filed,
an $A_\infty$-category
$\cA$
is unital
if and only if
it is cohomologically unital
\cite[Rem. 1.11]{COS}.
We denote by
$A_\infty Cat^s, A_\infty Cat^c$
and
$A_\infty Cat^u$
the subcategories of
$A_\infty Cat$
formed by
$A_\infty$-categories
and
$A_\infty$-functors
which are
strictly unital,
cohomologically unital
and
unital
respectively.
There are canonical faithful inclusions
\begin{align} \label{eq:inclusions}
A_\infty Cat^s
\to
A_\infty Cat^u
\hookrightarrow
A_\infty Cat^c
\end{align}
where the last one is also full. 
Let
$\bfw_s, \bfw_u$
and
$\bfw_c$
be the subcategories of
$A_\infty Cat^s, A_\infty Cat^u$
and
$A_\infty Cat^c$
generated by quasi-equivalences.
For
$* = s, u, c$
we denote by
$\cC at^*_{A_\infty}$
the $\infty$-categorical localizations
$N(A_\infty Cat^*)[N(\bfw_*)^{-1}]$.
They are the homotopy pushouts of the diagrams
$|N(\bfw_*)| \leftarrow N(\bfw_*) \to N(A_\infty Cat^*)$
where
$N(-)$
and
$|-|$
mean taking the
nerve
and
Kan completion
respectively.
Due to the following result,
we may call any of
$\cC at^*_{A_\infty}$
the
\emph{$\infty$-category of $A_\infty$-categories}.

\begin{theorem}[{\cite[Thm. A]{COS}}]
The canonical functors
\pref{eq:inclusions}
induce equivalences of $\infty$-categories
\begin{align} \label{eq:A}
\cC at^s_{A_\infty}
\xrightarrow{\sim}
\cC at^u_{A_\infty}
\xrightarrow{\sim}
\cC at^c_{A_\infty}.
\end{align}
\end{theorem}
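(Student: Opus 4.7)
The plan is to treat the two canonical inclusions $A_\infty Cat^s \to A_\infty Cat^u$ and $A_\infty Cat^u \hookrightarrow A_\infty Cat^c$ separately, and to show in each case that the inclusion of relative categories is a Dwyer--Kan equivalence, hence descends to an equivalence of the associated $\infty$-categorical localizations.

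For the second inclusion $A_\infty Cat^u \hookrightarrow A_\infty Cat^c$, the key input, already recorded just above in \cite[Rem. 1.11]{COS}, is that over a field any cohomologically unital $A_\infty$-category is automatically unital. Consequently this is a full subcategory inclusion which is essentially surjective on objects and both preserves and reflects quasi-equivalences. Standard principles for Dwyer--Kan localizations of relative categories then immediately yield that the induced functor $\cC at^u_{A_\infty} \to \cC at^c_{A_\infty}$ is an equivalence of $\infty$-categories.

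For the first inclusion $A_\infty Cat^s \to A_\infty Cat^u$, the plan is to construct a homotopical inverse via strictification. To every unital $A_\infty$-category $\cA$ one associates a strictly unital $A_\infty$-category $\cA^{\operatorname{str}}$ together with a unital quasi-equivalence $\cA^{\operatorname{str}} \to \cA$, obtained by absorbing the homotopies witnessing the identity axioms into strict units through a modification of the higher $A_\infty$-operations, along the lines of \cite{Lyu03}. For a unital $A_\infty$-functor $F \colon \cA \to \cB$, after passing to a cofibrant replacement of the source one lifts $F$ to a strictly unital $A_\infty$-functor $\cA^{\operatorname{str}} \to \cB^{\operatorname{str}}$ by further modifying its higher components so as to kill the non-strict unit terms.

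The main obstacle will be coherence of this strictification at the level of morphism complexes: to obtain an equivalence of $\infty$-categories rather than merely of homotopy categories, one must show that for any strictly unital $\cA, \cB$ the inclusion of the complex of strictly unital $A_\infty$-functors into the complex of unital $A_\infty$-functors $\cA \to \cB$ is a quasi-isomorphism. The technical device I expect to need is a cofibrant replacement of the source ensuring that any unital functor, together with any unital homotopy between such, admits a strictly unital rectification unique up to contractible choice. Combined with the object-level strictification, this realizes the desired homotopical inverse functor and completes the proof.
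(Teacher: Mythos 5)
This statement is not proved in the paper at all: it is imported verbatim from Canonaco--Ornaghi--Stellari as \cite[Thm.~A]{COS}, so there is no internal argument to compare your proposal against. Judged on its own merits, your sketch has a genuine gap in its treatment of the second inclusion. The paper works with $\bZ$-linear $A_\infty$-categories, and \cite[Thm.~A]{COS} is a statement over an arbitrary (commutative) ring; the input you invoke, namely that cohomological unitality implies unitality, is exactly the \emph{field-coefficient} remark \cite[Rem.~1.11]{COS} and fails over $\bZ$. Over a field the inclusion $A_\infty Cat^u \hookrightarrow A_\infty Cat^c$ is essentially the identity and your Dwyer--Kan argument is vacuous; over $\bZ$ the inclusion is full but far from essentially surjective on the nose, so to run your argument you would have to produce, for every cohomologically unital $A_\infty$-category, a unital replacement connected to it by a zigzag of quasi-equivalences, in a way coherent enough to invert the functor $\cC at^u_{A_\infty} \to \cC at^c_{A_\infty}$. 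That replacement is precisely the nontrivial content of the cited theorem, and your proposal does not supply it.

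The first half has a related problem: the step you yourself flag as ``the main obstacle'' --- that for suitable (cofibrant) sources the complex of strictly unital $A_\infty$-functors includes quasi-isomorphically into the complex of unital ones, with rectification unique up to contractible choice --- is the actual technical core of the comparison $\cC at^s_{A_\infty} \xrightarrow{\sim} \cC at^u_{A_\infty}$ (this is what \cite{Tan} and \cite{COS} prove), not a routine verification one may defer. Moreover, since not every unital $A_\infty$-category has $K$-projective morphism complexes, the strictification and rectification must be interleaved with cofibrant replacement in a homotopy-coherent way before they yield an inverse at the level of $\infty$-categorical localizations; asserting the existence of $\cA^{\operatorname{str}} \to \cA$ ``along the lines of \cite{Lyu03}'' and a functor-level lift ``by modifying higher components'' does not yet produce the required equivalence. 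As it stands the proposal is an outline of a plausible strategy, with the two decisive steps (the ring-coefficient replacement of cohomologically unital by unital categories, and the strict-unitalization of functor spaces) left unproved.
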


\begin{remark}
The first equivalence in
\pref{eq:A}
was also proved in
\cite{Tan}. 
Let
$dgCat$
be the ordinary category of
dg categories 
and
dg functors
and
$\cC at_{dg}$
the $\infty$-category of dg categories defined in the same way as
$\cC at^*_{A_\infty}$.
Then the canonical fully faithful inclusion
$dgCat \hookrightarrow A_\infty Cat$
induces an equivalence
$\cC at_{dg} \to \cC at_{A_\infty}$
of $\infty$-categories
\cite{COS, Pas, Tan}.
\end{remark}

\subsection{Cofibrancy}
Cofibrant $A_\infty$-categories behave well in
$\cC at^*_{A_\infty}$.
An $A_\infty$-category
$\cA$
is
\emph{cofibrant}
if for all
$X, Y \in \cA$
the morphism complexes
$\cA(X, Y)$
are $K$-projective
\cite[Def. 1.2]{OT25}.
Note that
there is no model structure on
$A_\infty Cat$.
Due to
\cite[Prop. 5.8]{Spa},
every $K$-projective cochain complex is $K$-flat.
By
\cite[Lem. 3.4]{GPS1}
it is cofibrant in the sense of
\cite[Def. 3.2]{GPS1}.
On the other hand,
every cofibrant cochain complex in the projective model structure is $K$-projective
\cite[Lem. 2.3.8]{Hov}.
As explained in
\cite[Rem. 1.6]{OT25},
one can always replace any morphism
$F \colon \cA \to \cB$
in
$\cC at^u_{A_\infty}$
with that
$\tilde{F} \colon \tilde{\cA} \to \tilde{\cB}$
of cofibrant unital $A_\infty$-categories giving a commutative diagram
$\Delta^1 \times \Delta^1 \to \cC at^u_{A_\infty}$
of the form
\begin{align*}
\begin{gathered}
\xymatrix{
\tilde{\cA} \ar^{\tilde{F}}[r] \ar[d] & \tilde{\cB} \ar[d] \\
\cA \ar^{F}[r] & \cB,
}
\end{gathered}
\end{align*}
where the vertical arrows are equivalences in
$\cC at^u_{A_\infty}$
induced by some quasi-equivalences.
We call such a morphism as
$\tilde{F}$
a
\emph{cofibrant replacement}
of
$F$.
By
\cite[Prop. 3.8]{Tan}
every quasi-isomorphism of cofibrant cochain complexes admits an inverse up to homotopy.
Moreover,
every quasi-equivalence of cofibrant $A_\infty$-categories admits an inverse up to natural equivalence
\cite[Prop. 3.45]{Tan}.
Restricting further to cofibrant unital $A_\infty$-categories,
one has the following useful results.

\begin{lemma}[{\cite[Prop. 3.46]{Tan}}] \label{lem:3.46}
For any unital quasi-equivalence
$\cA \to \cA^\prime$
of cofibrant $A_\infty$-categories, 
the pullback
$\Fun^u_{A_\infty}(\cA^\prime, \cB)
\to
\Fun^u_{A_\infty}(\cA, \cB)$
between unital $A_\infty$-categories of unital $A_\infty$-functors induces an equivalence in
$\cC at^u_{A_\infty}$.
\end{lemma}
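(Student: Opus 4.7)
\medskip

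The plan is to produce an explicit homotopy inverse to $F^*$ using a quasi-inverse of $F$ granted by cofibrancy, verify that the two compositions are naturally equivalent to the identities via whiskering, and then observe that a unital quasi-equivalence descends to an equivalence in $\cC at^u_{A_\infty}$.

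First, I would invoke \cite[Prop. 3.45]{Tan} to obtain a unital $A_\infty$-functor $G \colon \cA^\prime \to \cA$ together with unital natural equivalences $\eta \colon G \circ F \Rightarrow \id_\cA$ in $\Fun^u_{A_\infty}(\cA, \cA)$ and $\epsilon \colon F \circ G \Rightarrow \id_{\cA^\prime}$ in $\Fun^u_{A_\infty}(\cA^\prime, \cA^\prime)$. This is the step in which cofibrancy of $\cA$ and $\cA^\prime$ is used essentially. Precomposition with $G$ then defines the candidate inverse $G^* \colon \Fun^u_{A_\infty}(\cA, \cB) \to \Fun^u_{A_\infty}(\cA^\prime, \cB)$, $H \mapsto H \circ G$, which is manifestly unital.

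Next, for each $H \in \Fun^u_{A_\infty}(\cA^\prime, \cB)$ I would whisker $\epsilon$ with $H$, using the higher components of $H$, to obtain a natural equivalence $H \cdot \epsilon \colon F^* G^* H = H \circ F \circ G \Rightarrow H$ in $\Fun^u_{A_\infty}(\cA^\prime, \cB)$; the analogous whiskering of $\eta$ gives $K \cdot \eta \colon G^* F^* K \Rightarrow K$ for each $K \in \Fun^u_{A_\infty}(\cA, \cB)$. Since $A_\infty$-functors preserve natural equivalences, both are equivalences pointwise. I would then organize these pointwise data, as $H$ (resp.\ $K$) varies, into pre-natural transformations $F^* G^* \Rightarrow \id$ and $G^* F^* \Rightarrow \id$ at the level of the functor $A_\infty$-categories. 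Their pointwise components being equivalences implies they are themselves natural equivalences, so $F^*$ is a quasi-equivalence of unital $A_\infty$-categories.

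Finally, by the very construction of $\cC at^u_{A_\infty} = N(A_\infty Cat^u)[N(\bfw_u)^{-1}]$, every element of $N(\bfw_u)$, i.e.\ every unital quasi-equivalence, becomes an equivalence; applying this to $F^*$ yields the claim. The main obstacle is the middle step: promoting the families of pointwise whiskered equivalences into coherent pre-natural transformations of unital $A_\infty$-functors between the functor $A_\infty$-categories, and verifying the full $A_\infty$-equations for all higher components. This is where $K$-projectivity of the hom complexes of $\cA$ and $\cA^\prime$ is repeatedly used to lift obstructions and supply the needed higher homotopies, consistent with the role of cofibrancy emphasized in the discussion preceding the lemma.
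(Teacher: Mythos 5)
The paper does not actually prove this lemma: it is imported verbatim from \cite[Prop. 3.46]{Tan}, so there is no internal argument to compare against. Your reconstruction is the natural derivation from \cite[Prop. 3.45]{Tan} (quoted just before the lemma in the paper) and is correct in outline: cofibrancy supplies a quasi-inverse $G\colon\cA^\prime\to\cA$ with natural equivalences $\eta\colon G\circ F\Rightarrow\id_{\cA}$ and $\epsilon\colon F\circ G\Rightarrow\id_{\cA^\prime}$, whiskering shows $F^*$ and $G^*$ are mutually inverse up to natural equivalence, hence $F^*$ is a unital quasi-equivalence, and any morphism of $N(\bfw_u)$ becomes an equivalence in $\cC at^u_{A_\infty}=N(A_\infty Cat^u)[N(\bfw_u)^{-1}]$ by construction. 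Two corrections to the way you present it. First, the step you single out as the main obstacle is lighter than you suggest: composition of $A_\infty$-functors is strictly associative, so $G^*\circ F^*=(F\circ G)^*$ and $F^*\circ G^*=(G\circ F)^*$ on the nose, and a closed natural equivalence such as $\epsilon$ induces, via the standard composition bifunctor on functor $A_\infty$-categories, a natural equivalence $(F\circ G)^*\Rightarrow\id$ whose component at $H$ is exactly your left whiskering $H\cdot\epsilon$; no obstruction-theoretic lifting is required there, and in particular $K$-projectivity is not used at that stage --- cofibrancy is consumed entirely in producing $G$, $\eta$, $\epsilon$ via \cite[Prop. 3.45]{Tan}. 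Second, two minor points: you should record that $G$ is automatically unital (its cohomology functor is inverse to $HF$ up to natural isomorphism, hence preserves units), so that $G^*$ indeed lands in $\Fun^u_{A_\infty}(\cA^\prime,\cB)$; and your expression $F^*G^*H$ for $H\in\Fun^u_{A_\infty}(\cA^\prime,\cB)$ should read $G^*F^*H$ in the usual composition order (a harmless notational slip). With these adjustments the argument is a faithful, self-contained proof of the cited statement.
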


\begin{lemma}[{\cite[Prop. 3.47]{Tan}}] \label{lem:lift}
Suppose that
there is an equivalence
$H \cA \simeq H \cA^\prime$
of cohomology categories for cofibrant unital $A_\infty$-categories
$\cA, \cA^\prime$.
Then it lifts to a unital $A_\infty$-functor
$\cA \to \cA^\prime$
inducing an equivalence in
$\cC at^u_{A_\infty}$.
\end{lemma}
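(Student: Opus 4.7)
My plan is to construct the desired unital $A_\infty$-functor $F \colon \cA \to \cA^\prime$ by obstruction theory, exploiting the $K$-projectivity of hom complexes that cofibrancy provides. First I would fix a cohomological equivalence $G \colon H \cA \xrightarrow{\sim} H \cA^\prime$ and choose $F^0 \colon \Ob(\cA) \to \Ob(\cA^\prime)$ so that $F^0 X$ represents $G(X)$ for every $X \in \cA$. To build $F^1$, observe that $G$ supplies a quasi-isomorphism class $H \cA(X, Y) \xrightarrow{\sim} H \cA^\prime(F^0 X, F^0 Y)$; since $\cA(X, Y)$ is $K$-projective and $\cA^\prime(F^0 X, F^0 Y)$ has the prescribed cohomology, the standard derived-category lifting argument (in the spirit of \cite[Prop. 3.8]{Tan}) produces an honest chain map $F^1_{X,Y} \colon \cA(X,Y) \to \cA^\prime(F^0 X, F^0 Y)$ whose induced map on cohomology equals $G$.

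The higher components $\{F^k\}_{k \geq 2}$ are then constructed by induction on $k$. At stage $k$ the $A_\infty$-functor relation to be solved has the schematic form
\begin{align*}
\mu^1_{\cA^\prime} \circ F^k \pm F^k \circ (\text{terms of arity } < k) = \operatorname{Obs}_k(F^{<k}, \mu_\cA, \mu_{\cA^\prime}),
\end{align*}
where $\operatorname{Obs}_k$ is a universal polynomial in the already-constructed lower components and the structure maps of $\cA, \cA^\prime$. The standard argument using the $A_\infty$-relations and the equations verified at orders below $k$ shows that $\operatorname{Obs}_k$ is a $\mu^1_{\cA^\prime}$-cocycle; moreover, its cohomology class vanishes because $HF$ is by construction already a strict associative functor of cohomology categories, leaving no room for a class above $F^1$ at cohomology level. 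Using $K$-projectivity of the iterated tensor products $\cA(X_0, X_1) \otimes \cdots \otimes \cA(X_{k-1}, X_k)$, which follows from $K$-projectivity of each factor via the $K$-flatness recalled in the preceding subsection (\cite[Prop. 5.8]{Spa}), I can lift a chosen nullhomotopy of $\operatorname{Obs}_k$ to an actual chain map $F^k$ solving the $A_\infty$-functor equation at this order.

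Unitality must be maintained alongside the recursion: I would arrange $F^1$ to send a fixed cohomological unit $e_X$ to a cohomological unit of $F^0 X$ (possible since $G$ preserves units by hypothesis), and at each later stage modify the freshly built $F^k$ by an exact term so that $F^k(\ldots \otimes e_X \otimes \ldots)$ takes the form required of a unital $A_\infty$-functor. The obstructions to such modifications again live in $\operatorname{Ext}$-groups that vanish by the cohomology-level compatibility with units and by $K$-projectivity. Once $F$ is assembled, $F^1$ is by construction a quasi-isomorphism on every hom complex and $F^0$ is essentially surjective, so $F$ is a unital quasi-equivalence of cofibrant unital $A_\infty$-categories. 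By the very construction of $\cC at^u_{A_\infty} = N(A_\infty Cat^u)[N(\bfw_u)^{-1}]$ as an $\infty$-categorical localization at quasi-equivalences, $F$ then descends to the asserted equivalence in $\cC at^u_{A_\infty}$.

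The hard part will be the simultaneous bookkeeping for the higher obstruction theory and unitality — the latter being exactly what keeps us inside $A_\infty Cat^u$ rather than merely $A_\infty Cat$ — and checking that the choices of $F^{<k}$ at each stage do not close off later lifts. Cofibrancy is used crucially both to produce the initial lift $F^1$ from the cohomology equivalence and to close the recursion by turning each cocycle-level vanishing into an honest nullhomotopy; without $K$-projectivity, the relevant obstruction classes in higher $\operatorname{Ext}$ could be nontrivial and the induction could stall.
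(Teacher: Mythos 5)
First, note that the paper does not prove this lemma at all: it is imported verbatim as \cite[Prop.~3.47]{Tan}, so there is no internal argument to compare yours against; what you have written is a from-scratch attempt, and it contains a genuine gap at its central step. The gap is the claim that the obstruction cocycle $\operatorname{Obs}_k$ is nullcohomologous ``because $HF$ is by construction already a strict associative functor of cohomology categories.'' Associativity of $HF$ only disposes of the arity-$2$ stage (it lets you choose $F^2$ cobounding the multiplicativity defect of $F^1$). For $k \geq 3$ the class of $\operatorname{Obs}_k$ lives in a Hochschild-type group of degree-$(2-k)$ multilinear maps $H\cA(X_0,X_1)\otimes\cdots\otimes H\cA(X_{k-1},X_k)\to H\cA^\prime(F^0X_0,F^0X_k)$, and it is built from the higher operations $\mu^{\geq 3}$ of both sides together with the already-chosen $F^{<k}$; nothing about the cohomology-level equivalence forces it to vanish. $K$-projectivity does exactly what you say at the end --- it converts a vanishing cohomology class into an honest nullhomotopy --- but it never produces the vanishing, so the induction stalls precisely where you wave it through. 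A sanity check makes the failure vivid: over a field every cochain complex is $K$-projective, so every unital $A_\infty$-category is cofibrant in the sense of this paper, and your argument would then show that any two unital $A_\infty$-categories with equivalent cohomology categories are quasi-equivalent; this is contradicted by standard non-formality examples (dg algebras with isomorphic cohomology rings distinguished by Massey products). So a proof along your lines cannot close without additional input beyond the bare equivalence $H\cA\simeq H\cA^\prime$.

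This also tells you something about how the lemma must actually be functioning. Whatever the precise hypotheses and proof of \cite[Prop.~3.47]{Tan} are, they have to carry more information than the cohomology-level equivalence alone, and in the places where the present paper invokes the lemma (e.g.\ comparing $\cF_{w \cS, \delta_1}$ and $\cF_{w \cS, \delta_2}$) the equivalence of cohomology categories is not an abstract one: it is induced by the explicit chain-level continuation data $\alpha, \beta, \gamma$ from data (vii)--(ix) of the abstract Floer setup, which is exactly the kind of coherence your obstruction theory would need in order to trivialize the higher classes. Your secondary concern --- the simultaneous bookkeeping of unitality --- is indeed delicate but is not the main issue; the argument already fails before unitality enters, at the unjustified vanishing of $[\operatorname{Obs}_k]$ for $k\geq 3$.
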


\begin{remark}[{\cite[Rmk. 1.5]{OT25}, \cite[Rmk 1.6]{Tan}}]
Functor $A_\infty$-categories are not necessarily invariant under quasi-equivalences in the domain variable.
Specifically,
if either of
$\cA$
or
$\cA^\prime$
is not cofibrant,
then in general the pullback does not induce a bijection on homotopy classes of $A_\infty$-functors. 
\end{remark}

\subsection{Quotients and localizations}
Another advantage to work within
$\cC at^*_{A_\infty}$
is the rigorous universal property of
quotients
and
localizations of $A_\infty$-categories.
Let
$\cB \hookrightarrow \cA$
be a fully faithful inclusion of unital $A_\infty$-categories.
We denote by
$\mathbf{0}$
the zero category,
i.e.
the category with
a single object
and
the zero morphism.
Then the
\emph{quotient $A_\infty$-category}
$\cA / \cB$
is defined in
\cite[Def. 4.1]{OT25}
to be the pushout in
$\cC at^u_{A_\infty}$
of the diagram
$\mathbf{0} \leftarrow \cB \hookrightarrow \cA$.
Passing via the canonical equivalence to
$\cC at_{dg}$,
one sees that
$\cC at^u_{A_\infty}$
is presentable.
In particular,
$\cC at^u_{A_\infty}$
admits all colimits.
Hence
$\cA / \cB$
always exists.
Moreover,
by
\cite[Prop. 4.5]{OT25}
it exhibits universal property
when
$\cA$
is in addition cofibrant.
Namely,
for any unital $A_\infty$-category
$\cD$
the pullback
\begin{align*}
\hom_{\cC at^u_{A_\infty}}(\cA / \cB, \cD) \to \hom_{\cC at^u_{A_\infty}}(\cA, \cD)
\end{align*}
is fully faithful
and
its essential image consists of those functors
which are contractible along
$\cB$
\cite[Def. 4.2, Rmk. 4.3]{OT25}.
Due to the following result,
for any unital $A_\infty$-functor
$i \colon \cB \to \cA$
one may define the
\emph{quotient $A_\infty$-category}
$\cA / \cB$
along
$i$
as
$\cA / \cB^\#$,
where
$\cB^\# \subset \cA$
denotes the full subcategory spanned by the essential image of
$i$.

\begin{lemma}[{\cite[Prop. 4.7]{OT25}}]
For any unital $A_\infty$-functor
$i \colon \cB \to \cA$,
the pushout in
$\cC at_{A_\infty}$
of the diagram
$\bf0 \leftarrow \cB \xrightarrow{i} \cA$
is equivalent to
$\cA / \cB^\#$.
\end{lemma}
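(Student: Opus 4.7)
The plan is to reduce the pushout along a general unital $A_\infty$-functor to the pushout along a fully faithful inclusion, via a canonical factorization of $i$ and the pasting law for pushouts. Factor $i \colon \cB \to \cA$ as
\[
\cB \xrightarrow{i'} \cB^\# \hookrightarrow \cA,
\]
where the second arrow is the full inclusion $\cB^\# \subset \cA$ and $i'$ is the induced essentially surjective unital $A_\infty$-functor. Because $\cC at^u_{A_\infty}$ is presentable, it admits all pushouts, and the usual pasting law applies: in the rectangle
\[
\xymatrix{
\cB \ar[r]^{i'} \ar[d] & \cB^\# \ar[r] \ar[d] & \cA \ar[d] \\
\mathbf{0} \ar[r] & P \ar[r] & Q,
}
\]
if both inner squares are pushouts then so is the outer rectangle, which by construction computes the pushout of $\mathbf{0} \leftarrow \cB \to \cA$ along $i$. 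It therefore suffices to prove the key reduction $P = \cB^\# \cup_\cB \mathbf{0} \simeq \mathbf{0}$, for then the right-hand square identifies $Q$ with $\cA \cup_{\cB^\#} \mathbf{0} = \cA/\cB^\#$ by definition.

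To establish the reduction I would test against mapping $\infty$-groupoids. For any unital $A_\infty$-category $\cD$, the pushout property yields
\[
\hom_{\cC at^u_{A_\infty}}(P, \cD) \simeq \hom_{\cC at^u_{A_\infty}}(\cB^\#, \cD) \times_{\hom_{\cC at^u_{A_\infty}}(\cB, \cD)} \hom_{\cC at^u_{A_\infty}}(\mathbf{0}, \cD).
\]
A unital functor from $\mathbf{0}$ picks out a zero object of $\cD$, since the condition $\id = 0$ in a unital category forces every morphism in or out of the object to vanish in cohomology. A point of the fibre product is therefore a unital $A_\infty$-functor $F \colon \cB^\# \to \cD$ together with a coherent equivalence $F \circ i' \simeq \underline{z(\ast)}$ to the constant functor at a zero object $z(\ast)$. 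Essential surjectivity of $i'$ then forces $F(x)$ to be equivalent to $z(\ast)$ for every $x \in \cB^\#$, so after replacing $\cB^\#$ by a cofibrant model (invoking Lemma \ref{lem:3.46} to see that mapping spaces and unital functor categories are preserved) one sees that the projection onto the $\mathbf{0}$-coordinate is both essentially surjective and fully faithful, hence an equivalence.

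The main obstacle will be precisely this last coherence argument: although every $F(x)$ must be a zero object, $F$ does not literally land in a single chosen zero object, so one must coherently contract $F$ onto $\underline{z(\ast)}$ to exhibit the inverse equivalence. I expect this to follow from the fact that the subspace of zero objects of a unital $A_\infty$-category is either empty or contractible (any two zero objects are canonically equivalent), so on a cofibrant $\cB^\#$ the datum ``$F$ sends every object to a zero object, compatibly with $z$ on $i'(\cB)$'' is contractible once $z(\ast)$ is fixed. With that coherence dealt with, the pasting diagram above immediately concludes $\cA \cup_\cB \mathbf{0} \simeq \cA/\cB^\#$, proving the lemma.
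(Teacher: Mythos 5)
You should first note that the paper does not prove this statement at all: it is imported verbatim from \cite[Prop.~4.7]{OT25} and used as a black box, so there is no in-paper proof to compare against. Judged on its own merits, your argument is a legitimate and essentially complete sketch, and its structure is sound: factoring $i$ as $\cB \to \cB^{\#} \hookrightarrow \cA$, pasting pushouts, and reducing everything to the claim $\mathbf{0} \cup_{\cB} \cB^{\#} \simeq \mathbf{0}$ is exactly the right reduction, and since $\mathbf{0}$ is terminal in $\cC at^u_{A_\infty}$ (the unital functor category $\Fun^u_{A_\infty}(\cX,\mathbf{0})$ has a single object with zero morphism complex), the identification of the right-hand pushout with $\cA/\cB^{\#}$ is automatic. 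Be aware, however, that this key sub-claim is itself the special case $\cA = \cB^{\#}$ of the lemma, so your mapping-space argument for it must be (and is) independent: a point of $\hom_{\cC at^u_{A_\infty}}(\mathbf{0},\cD)$ is a zero object of $\cD$, any $F$ with $F \circ i' \simeq \underline{z}$ lands in the full subcategory $\cZ \subset \cD$ of zero objects because $\id_{F(x)}$ factors in cohomology through $z(\ast)$ whose identity vanishes, and then the homotopy fibre is a fibre of a map between the contractible spaces $\hom(\cB^{\#},\cZ)$ and $\hom(\cB,\cZ)$, since $\cZ \to \mathbf{0}$ is a quasi-equivalence whenever $\cZ \neq \emptyset$ (and both sides are empty otherwise). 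The two points you should make explicit rather than wave at are: (i) that functors landing in a full subcategory closed under equivalences form a union of path components of the mapping space equivalent to $\hom(-,\cZ)$ --- this follows because $\Fun^u_{A_\infty}(\cX,\cZ)$ is a \emph{full} $A_\infty$-subcategory of $\Fun^u_{A_\infty}(\cX,\cD)$, the natural-transformation complexes only involving morphisms between objects of $\cZ$; and (ii) your appeal to the coherence of ``contracting $F$ onto $\underline{z(\ast)}$'' is subsumed by exactly this fibre computation, so no separate contractibility-of-zero-objects statement is needed (your citation of the quasi-equivalence invariance lemma is not really the relevant tool here; what you need is that mapping spaces in $\cC at^u_{A_\infty}$ are computed by unital functor categories out of a cofibrant replacement). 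With those details filled in, your proof stands as a self-contained alternative to citing \cite{OT25}.
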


One can construct localizations of $A_\infty$-categories via quotients.
Let
$\cA$
be a unital $A_\infty$-category
and
fix a subset
$W \subset H^0 \cA$
of cohomology classes of some morphisms in
$\cA$.
In
\cite[Def. 5.2]{OT25}
a
\emph{localization}
of
$\cA$
along
$W$
is defined to be a unital $A_\infty$-functor
$\iota \colon \cA \to \cL$
if any of its cofibrant replacements
$\tilde{\iota} \colon \tilde{\cA} \to \tilde{\cL}$
satisfies the following universal property in the sense of
\cite[Prop. 5.1(a)]{OT25}:
for every unital $A_\infty$-category
$\cD$,
the pullback
\begin{align*}
\tilde{\iota}^*
\colon
\hom_{\cC at^u_{A_\infty}}(\tilde{\cL}, \cD)
\to
\hom_{\cC at^u_{A_\infty}}(\tilde{\cA}, \cD)
\end{align*}
is fully faithful
and
its essential image consists of those functors
which send all representatives of any element in
$W$
to equivalences in
$\cD$.
We denote by
$\cA[W^{-1}]$
the codomain of such a morphism as
$\iota$,
which is also called the
\emph{localization}
of
$\cA$
along
$W$.
Let
$\cB_W \subset \cA$
be the full subcategory spanned by objects
which arise as mapping cones of representatives of elements in
$W$.
The quotient $A_\infty$-category
$(\Tw \cA) / \cB_W$
receives the canonical functor
$\cA \hookrightarrow \Tw \cA \to (\Tw \cA) / \cB_W$.
We denote by
$\cL_W$
the full subcategory spanned by its essential image.
Due to the following result,
it is a localization of
$\cA$
along
$W$
when
$\cA$
is in addition cofibrant.

\begin{theorem}[{\cite[Thm. 5.9]{OT25}}] \label{thm:5.9}
When
$\cA$
is cofibrant,
the unital $A_\infty$-functor
$\cA \to \cL_W$
satisfies the universal property in the sense of
\cite[Prop. 5.1(a)]{OT25}.
\end{theorem}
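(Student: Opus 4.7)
The plan is to chain together three universal properties: the twisted-complex extension, the quotient universal property of \cite[Prop. 4.5]{OT25}, and the fully faithful inclusion $\cL_W \hookrightarrow (\Tw\cA)/\cB_W$. First I would verify that $\Tw\cA$ inherits cofibrancy from $\cA$, since its morphism complexes are iterated matrix constructions on $K$-projective complexes and hence remain $K$-projective; this makes the quotient $\cQ := (\Tw\cA)/\cB_W$ accessible to the cited proposition and permits choosing a cofibrant model for $\cL_W$.

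Applying the quotient universal property to $\cQ$ yields that the pullback
\begin{align*}
\hom_{\cC at^u_{A_\infty}}(\cQ, \cD) \hookrightarrow \hom_{\cC at^u_{A_\infty}}(\Tw\cA, \cD)
\end{align*}
is fully faithful with essential image the functors contractible along $\cB_W$, i.e.\ those sending each cone of a representative of $w \in W$ to a zero object of $\cD$. Next I would link $\hom(\Tw\cA, \cD)$ with $\hom(\cA, \cD)$ via the canonical extension $F \mapsto \Tw F \colon \Tw\cA \to \Tw\cD$, using the observation that $F$ inverts a representative $f$ of $w \in W$ in $\cD$ if and only if $\Tw F$ sends the cone $\Cone(f) \in \cB_W$ to a contractible object of $\Tw\cD$. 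Combined with the previous paragraph, this produces a canonical factorization of $\Tw F$ through $\cQ \to \Tw\cD$ precisely when $F$ inverts all representatives of $W$.

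For the third step, I would restrict from $\cQ$ to the full subcategory $\cL_W$ on the essential image of $\cA$. Since $\Tw F$ maps objects of $\cA$ into $\cD \subset \Tw\cD$, the restriction lands in $\cD$, yielding a canonical lift $\cL_W \to \cD$ of $F$. Fullness of $\cL_W \hookrightarrow \cQ$ ensures that morphism complexes compute correctly, and assembling the three steps via \pref{lem:3.46} and \pref{lem:lift} produces a fully faithful pullback
\begin{align*}
\hom_{\cC at^u_{A_\infty}}(\cL_W, \cD) \hookrightarrow \hom_{\cC at^u_{A_\infty}}(\cA, \cD)
\end{align*}
with essential image exactly the functors inverting all representatives of $W$, which is the required universal property.

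The hard part will be the middle step: showing that the extension--restriction procedure $F \leftrightarrow \Tw F$ induces a genuine equivalence of functor $\infty$-categories rather than only a bijection on homotopy classes of objects. This requires a careful comparison of natural-transformation complexes, relying on cofibrancy of $\Tw\cA$ and on the universal property of the quotient at the level of morphism complexes rather than merely at the level of objects.
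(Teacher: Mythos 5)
First, note that the paper does not actually prove this statement: it is quoted verbatim from \cite[Thm. 5.9]{OT25}, so there is no in-paper argument for you to match, and any proof has to reconstruct the Oh--Tanaka argument. Your outline does follow the expected quotient-to-localization strategy underlying that result, and several individual points are fine: $\Tw \cA$ is indeed cofibrant when $\cA$ is (the twisted differentials are strictly triangular, so the morphism complexes carry finite filtrations with $K$-projective associated graded pieces), the cone criterion ``$F$ inverts a representative $f$ of $w \in W$ iff $\Tw F$ sends $\Cone(f)$ to a contractible object'' is correct for unital $\cD$, and the quotient universal property of \cite[Prop. 4.5]{OT25} does give an object-level factorization of $\Tw F$ through $(\Tw \cA)/\cB_W$ when $F$ inverts all representatives of $W$.

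The genuine gap is exactly the step you defer to the end: nothing in your chain establishes that the pullback $\hom_{\cC at^u_{A_\infty}}(\cL_W, \cD) \to \hom_{\cC at^u_{A_\infty}}(\cA, \cD)$ is \emph{fully faithful}, which is the substance of the universal property, not an afterthought. The tools you invoke do not reach it: \pref{lem:3.46} and \pref{lem:lift} apply to quasi-equivalences of cofibrant categories, whereas $\cA \to \cL_W$ is not a quasi-equivalence; the quotient universal property controls functors out of $(\Tw\cA)/\cB_W$, but you then restrict along the full, non-essentially-surjective inclusion $\cL_W \subset (\Tw\cA)/\cB_W$ and along $\cA \to \Tw\cA$, and pass from $\Tw\cD$-valued functors back to $\cD$-valued ones; none of these operations is known a priori to induce fully faithful (or even well-controlled) maps on natural-transformation complexes, and the factorization through the quotient only lands in $\cD \subset \Tw\cD$ up to equivalence, so even the object-level lift needs a replacement of $\cD$ by its essential closure in $\Tw\cD$. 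What is actually required is a comparison of morphism complexes in the functor categories — e.g.\ a bar-resolution/Kan-extension argument exploiting that every object of $(\Tw\cA)/\cB_W$ is a twisted complex built from objects of $\cL_W$, together with cofibrancy — and this is precisely the content of the proof in \cite{OT25} that your proposal names as ``the hard part'' but does not supply. As it stands the proposal yields at best essential surjectivity onto the expected image, so it does not prove the theorem.
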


\subsection{Models}
There are various models for the quotient $A_\infty$-categories.
Here,
we pick two
which will be relevant for us.
Let
$\cB \hookrightarrow \cA$
be a fully faithful inclusion of cofibrant unital $A_\infty$-categories.
We denote by
$\bfQ(\cA | \cB)$
the Lyubashenko--Manzyuk model from
\cite{LM}.
It exhibits universal property in the classical sense.

\begin{theorem}[{\cite[Thm. 1.3]{LM}}]
There is a unital $A_\infty$-functor
$LM \colon \cA \to \bfQ(\cA | \cB)$
satisfying the following property:
for any unital $A_\infty$-category
$\cD$,
the pullback
\begin{align*}
LM^*
\colon
\Fun^u_{A_\infty}(\bfQ(\cA | \cB), \cD)
\to
\Fun^u_{A_\infty}(\cA, \cD)
\end{align*}
is fully faithful
and
its essential image consists of those functors
which are contractible along
$\cB$
\cite[Def. 4.2, Rmk. 4.3]{OT25}.
\end{theorem}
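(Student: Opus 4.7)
The plan is to follow the strategy of Lyubashenko--Manzyuk and verify the universal property directly for their explicit model of the quotient. First, I would recall that $\bfQ(\cA|\cB)$ is constructed as an $A_\infty$-category with the same set of objects as $\cA$ whose morphism complexes are obtained from those of $\cA$ by freely adjoining, for each object $B \in \cB$, a degree $-1$ element $h_B \in \bfQ(\cA|\cB)(B,B)$ subject to $\mu^1(h_B) = e_B$ together with all higher $A_\infty$-compatibilities these generators must satisfy. The morphism $LM \colon \cA \to \bfQ(\cA|\cB)$ is then the tautological unital $A_\infty$-inclusion on morphism complexes, which is strictly unital by construction.

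Next, given a unital $A_\infty$-functor $G \colon \cA \to \cD$ which is contractible along $\cB$ in the sense of \cite[Def. 4.2]{OT25}, I would extend $G$ to a unital functor $\tilde{G} \colon \bfQ(\cA|\cB) \to \cD$ by sending each generator $h_B$ to a chosen contracting null-homotopy of $e_{G(B)}$ in $\cD$, and by inductively defining the higher components $\tilde{G}^k$ on tensor products involving the $h_B$'s using the coherent contractibility data. The fully faithfulness of $LM^*$ then follows by applying the same extension principle to $A_\infty$-natural transformations, viewed as degree zero cocycles in an appropriate pre-bimodule complex: a transformation between $LM^*F$ and $LM^*G$ lifts uniquely because every component involving the $h_B$'s is forced by the $A_\infty$-relations once its value on $\cA$-generators is fixed.

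To pin down the essential image, I would match the obstructions to constructing $\tilde{G}$ step by step against the contractibility data. The first obstruction is the existence of a null-homotopy of each $e_{G(B)}$; the $n$-th obstruction coincides with the coherent higher contraction datum at level $n$. Thus an extension exists if and only if $G$ is contractible along $\cB$ in the precise sense of \cite[Def. 4.2]{OT25}, and Remark 4.3 of \cite{OT25} lets us match the two notions.

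The main obstacle is the combinatorial and homological bookkeeping required to show that these inductive extensions actually satisfy the full $A_\infty$-relations and are unique up to coherent homotopy. Since $A_\infty Cat$ carries no model structure, one cannot appeal to abstract lifting; instead one must solve the obstructions explicitly at each stage via the tree-indexed combinatorics of \cite{LM}, crucially invoking cofibrancy (i.e.\ $K$-projectivity, cf.\ \cite[Def. 1.2]{OT25}) of $\cA$ so that the relevant cocycle equations can be solved in the morphism complexes of $\cD$, and thereby reducing the proof to \cite[Thm. 1.3]{LM}.
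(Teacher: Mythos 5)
This theorem is quoted in the paper as a cited external result from \cite[Thm.\ 1.3]{LM}; the paper gives no proof of it and treats the Lyubashenko--Manzyuk quotient and its classical universal property as a black box. Your proposal therefore attempts something the paper never does, namely to reconstruct the argument from the construction of $\bfQ(\cA|\cB)$, and the resulting sketch cannot be checked against anything in the paper.

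Two concrete concerns with the reconstruction itself. First, the model you describe --- freely adjoining for each $B \in \cB$ a degree $-1$ endomorphism $h_B$ with $\mu^1(h_B) = e_B$ and then closing under the $A_\infty$-relations --- is the Drinfeld-style quotient, which in the paper's notation is associated with the Lyubashenko--Ovsienko model $\bfD(\cA|\cB)$, not with $\bfQ(\cA|\cB)$. The paper explicitly treats $\bfQ$ and $\bfD$ as distinct constructions related only by a canonical comparison functor (stated afterwards via \cite[Thm.\ 7.4]{LM}); the Lyubashenko--Manzyuk $\bfQ$ is built rather differently, and the inductive ``send $h_B$ to the contraction datum'' extension scheme you outline would prove the universal property for $\bfD$, not for $\bfQ$. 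Second, your appeal to cofibrancy ($K$-projectivity) of $\cA$ ``so that the relevant cocycle equations can be solved'' is out of place for this statement: the cited theorem expresses the \emph{classical} universal property of $\bfQ(\cA|\cB)$, and in the paper's organization cofibrancy is only invoked later, in \pref{thm:4.13}, to identify $\bfQ(\cA|\cB)$ with the $\infty$-categorical quotient $\cA/\cB$. Conflating these two roles suggests a gap in understanding of how the paper layers the classical and the $\infty$-categorical universal properties.
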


We denote by
$\bfD(\cA | \cB)$
the Lyubashenko--Ovsienko model from
\cite{LO}.
As an object of
$\cC at^u_{A_\infty}$,
it coincides with
$\bfQ(\cA | \cB)$.

\begin{theorem}[{\cite[Thm. 7.4]{LM}}]
There is a unital $A_\infty$-functor
$LO \colon \cA \to \bfD(\cA | \cB)$
satisfying the following property:
it is contractible along
$\cB$
\cite[Def. 4.2, Rmk. 4.3]{OT25}
and
the canonical unital $A_\infty$-functor from
$\bfQ(\cA | \cB)$
induces an equivalence in
$\cC at^u_{A_\infty}$.
\end{theorem}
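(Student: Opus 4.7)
The plan is to import the explicit constructions of Lyubashenko--Ovsienko and Lyubashenko--Manzyuk and to appeal to \cite[Thm. 7.4]{LM} for the comparison. First, I would recall that $\bfD(\cA|\cB)$ has the same underlying set of objects as $\cA$, while its morphism complexes are a bar-type construction that freely adjoins, for each object $B$ of $\cB$, a generator of degree $-1$ realizing a contracting homotopy for $\id_B$. The functor $LO \colon \cA \to \bfD(\cA|\cB)$ is the canonical inclusion picking out the lowest-order summands, namely the tensor factors that do not pass through any object of $\cB$.

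Second, I would verify unitality and contractibility along $\cB$. Unitality of $\bfD(\cA|\cB)$ and of $LO$ is inherited from $\cA$, since the adjoined generators do not destroy the units already present. The contractibility of $LO$ along $\cB$ in the sense of \cite[Def. 4.2, Rmk. 4.3]{OT25} is built into the construction: by design, for every $B \in \cB$ the generator adjoined is a bounding cochain witnessing that $\id_{LO(B)}$ is cohomologous to zero, which is exactly the property required.

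Third, I would compare with the Lyubashenko--Manzyuk model $\bfQ(\cA|\cB)$. Both models have the same underlying objects and admit natural increasing filtrations by the number of intermediate $\cB$-factors appearing in the bar complexes, and the canonical unital $A_\infty$-functor $\bfQ(\cA|\cB) \to \bfD(\cA|\cB)$ is strictly filtered. A spectral sequence argument, or equivalently an explicit homotopy constructed from the universally adjoined contracting generators, shows that this canonical functor is a quasi-equivalence. Since quasi-equivalences are equivalences in $\cC at^u_{A_\infty}$, we obtain the desired equivalence.

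The main obstacle is the filtered comparison in the third step: one must keep track of the sign conventions and the higher $A_\infty$-operations that enter the differentials on the two bar complexes, and verify that the map induced on the first page of the spectral sequence is an isomorphism. This combinatorial bookkeeping is carried out carefully in \cite{LM}, and I would invoke the results of that paper as a black box rather than redo the computation from scratch.
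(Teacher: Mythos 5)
Your proposal is correct and takes essentially the same route as the paper: the paper offers no proof of its own here, stating the result as an imported theorem under the subsection's standing assumption that $\cB \hookrightarrow \cA$ is a fully faithful inclusion of cofibrant unital $A_\infty$-categories and citing \cite[Thm. 7.4]{LM} (with the contractibility notion from \cite{OT25}), which is exactly the black-box appeal you make. Your sketch of the Drinfeld-type description of $\bfD(\cA | \cB)$, the contractibility of $LO$ along $\cB$, and the filtered comparison with $\bfQ(\cA | \cB)$ is a faithful gloss on the cited construction and introduces no error.
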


Due to the following result,
one may adopt both
$\bfQ(\cA | \cB)$
and
$\bfD(\cA | \cB)$
as a model for
$\cA / \cB$,
when
$\cA$
is in addition cofibrant.

\begin{theorem}[{\cite[Thm. 4.13]{OT25}}] \label{thm:4.13}
When
$\cA$
is cofibrant,
$LM$
and
$LO$
respectively exhibit
$\bfQ(\cA | \cB)$
and
$\bfD(\cA | \cB)$
as the quotient $A_\infty$-category
$\cA / \cB$.
\end{theorem}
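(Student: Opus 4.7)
The plan is to verify that $\bfQ(\cA | \cB)$ satisfies the universal property characterizing the pushout $\cA / \cB$ in $\cC at^u_{A_\infty}$ given by \cite[Prop. 4.5]{OT25}, and then to transfer the conclusion to $\bfD(\cA | \cB)$ via \cite[Thm. 7.4]{LM}. Applying the classical universal property \cite[Thm. 1.3]{LM} to the zero functor $\cA \to \mathbf{0}$ shows that $LM \circ (\cB \hookrightarrow \cA)$ is equivalent to the zero functor $\cB \to \bfQ(\cA | \cB)$, hence null in $\cC at^u_{A_\infty}$. The universal property of the pushout then produces a canonical morphism
\begin{align*}
\Phi \colon \cA / \cB \to \bfQ(\cA | \cB)
\end{align*}
in $\cC at^u_{A_\infty}$ whose composition with $\cA \to \cA / \cB$ is equivalent to $LM$.

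To show $\Phi$ is an equivalence, by the Yoneda lemma it suffices to prove that for every unital $A_\infty$-category $\cD$ the pullback $\Phi^*$ induces an equivalence on mapping spaces. Since $\cA$ is cofibrant and the Lyubashenko--Manzyuk construction enlarges $\cA$ freely by cofibrant polynomial generators for the contracting homotopies, $\bfQ(\cA | \cB)$ is again cofibrant. By \pref{lem:3.46}, for a cofibrant source $\cE$ the mapping space $\hom_{\cC at^u_{A_\infty}}(\cE, \cD)$ is modeled by the classifying space of the subcategory of quasi-equivalences inside $\Fun^u_{A_\infty}(\cE, \cD)$; in particular no cofibrant replacement of $\cE$ is needed. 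Under this identification, the classical fully faithful inclusion $LM^* \colon \Fun^u_{A_\infty}(\bfQ(\cA | \cB), \cD) \hookrightarrow \Fun^u_{A_\infty}(\cA, \cD)$ of \cite[Thm. 1.3]{LM} descends to a fully faithful inclusion of mapping spaces, and preservation of its essential image under passage to the classifying space of quasi-equivalences is immediate from the definition of contractibility along $\cB$ \cite[Def. 4.2, Rmk. 4.3]{OT25}.

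On the other hand, \cite[Prop. 4.5]{OT25} identifies $\hom_{\cC at^u_{A_\infty}}(\cA / \cB, \cD) \hookrightarrow \hom_{\cC at^u_{A_\infty}}(\cA, \cD)$ as exactly the same sub-Kan complex of contractible-along-$\cB$ functors. Hence $\Phi^*$ is an equivalence for all $\cD$, so $\Phi$ is an equivalence in $\cC at^u_{A_\infty}$, realized by $LM$. For the Lyubashenko--Ovsienko model, \cite[Thm. 7.4]{LM} supplies a canonical equivalence $\bfQ(\cA | \cB) \xrightarrow{\sim} \bfD(\cA | \cB)$ in $\cC at^u_{A_\infty}$ that intertwines $LM$ and $LO$; composing with $\Phi$ yields the analogous identification $\cA / \cB \xrightarrow{\sim} \bfD(\cA | \cB)$ realized by $LO$.

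The main obstacle is the descent from the classical universal property stated at the level of unital $A_\infty$-functor categories to its $\infty$-categorical counterpart stated at the level of mapping spaces in $\cC at^u_{A_\infty}$. Concretely one must verify cofibrancy of $\bfQ(\cA | \cB)$ (which requires inspecting the Lyubashenko--Manzyuk construction of \cite{LM}) and check that the classical notion of contractibility along $\cB$, formulated as a natural equivalence inside $\Fun^u_{A_\infty}(\cA, \cD)$, matches the $\infty$-categorical one of \cite[Def. 4.2, Rmk. 4.3]{OT25} after passing to the Kan completion. Once these two technical points are settled, the result follows by a diagram chase in $\cC at^u_{A_\infty}$.
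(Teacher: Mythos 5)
First, note that the paper does not prove this statement at all: it is quoted verbatim from \cite[Thm. 4.13]{OT25} as part of the background on models for quotients, so there is no internal proof to compare yours against. Judged on its own terms, your outline has the right shape (construct $\Phi \colon \cA/\cB \to \bfQ(\cA|\cB)$ from the pushout property, check it induces equivalences on mapping spaces into every $\cD$, transfer to $\bfD(\cA|\cB)$ via \cite[Thm. 7.4]{LM}), but the two points you defer as ``technical'' are precisely the mathematical content of the theorem, and one of them rests on a misattribution. \pref{lem:3.46} (\cite[Prop. 3.46]{Tan}) only says that a unital quasi-equivalence of \emph{cofibrant} sources induces an equivalence $\Fun^u_{A_\infty}(\cA^\prime,\cB) \to \Fun^u_{A_\infty}(\cA,\cB)$; it does not say that $\hom_{\cC at^u_{A_\infty}}(\cE,\cD)$ is modeled by (the quasi-equivalences inside) $\Fun^u_{A_\infty}(\cE,\cD)$ for cofibrant $\cE$. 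That comparison between the strict functor category and the mapping space in the localization $N(A_\infty Cat^u)[N(\bfw_u)^{-1}]$ is exactly the descent step you need to pass from the classical universal property of $LM$ in \cite[Thm. 1.3]{LM} to the $\infty$-categorical one of \cite[Prop. 4.5]{OT25}, and without citing or proving such a result your ``Yoneda'' step does not go through. Likewise, cofibrancy of $\bfQ(\cA|\cB)$ is asserted by appeal to the shape of the Lyubashenko--Manzyuk construction but not verified (over a general base ring one must check that the relevant morphism complexes, built from shifts, sums and tensor products of $K$-projective complexes of $\cA$, remain $K$-projective).

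A smaller but real defect is the construction of $\Phi$ itself: ``applying \cite[Thm. 1.3]{LM} to the zero functor $\cA \to \mathbf{0}$'' does not parse, since that theorem concerns functors \emph{out of} $\bfQ(\cA|\cB)$. What you actually need is that $LM$ is contractible along $\cB$ (obtained by taking the identity of $\bfQ(\cA|\cB)$ in the essential-image description of $LM^*$, or directly from the analogous statement for $LO$ in \cite[Thm. 7.4]{LM}), and then that this classical contractibility datum supplies a commutative square over $\mathbf{0} \leftarrow \cB \hookrightarrow \cA$ in the $\infty$-category $\cC at^u_{A_\infty}$, i.e.\ a cone inducing $\Phi$ out of the pushout. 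This again uses the same unproven bridge between strict functor-level data and edges/homotopies in $\cC at^u_{A_\infty}$. So the proposal is a reasonable plan, but as written it assumes the key comparison results it would need to prove.
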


Let
$\cA$
be a unital $A_\infty$-category
and
fix a subset
$W \subset H^0 \cA$
of cohomology classes of some morphisms in
$\cA$.
Combining
\pref{thm:5.9}
and
\pref{thm:4.13},
one obtains

\begin{corollary}[{\cite[Thm. 5.13]{OT25}}] \label{cor:5.13}
The localization
$\cA[W^{-1}]$
is equivalent to the full subcategory of
$\bfD(\cA | \cB_W)$
spanned by objects in the essential image of the the canonical functor
$\cA \hookrightarrow \Tw \cA \to (\Tw \cA) / \cB_W \simeq \bfD(\cA | \cB_W)$.
\end{corollary}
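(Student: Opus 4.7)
The plan is to combine Theorem~\ref{thm:5.9} with Theorem~\ref{thm:4.13} applied to the pair $\cB_W \hookrightarrow \Tw \cA$. By the definition recalled just before Theorem~\ref{thm:5.9}, $\cL_W$ is the full subcategory of the quotient $(\Tw \cA)/\cB_W$ spanned by the essential image of the canonical functor $\cA \hookrightarrow \Tw \cA \to (\Tw \cA)/\cB_W$, and Theorem~\ref{thm:5.9} already identifies $\cA \to \cL_W$ with a localization of $\cA$ along $W$ whenever $\cA$ is cofibrant. Thus it only remains to replace $(\Tw \cA)/\cB_W$ by its Lyubashenko--Ovsienko model $\bfD(\cA | \cB_W)$.

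To apply Theorem~\ref{thm:4.13} to the pair $(\Tw \cA, \cB_W)$, I would first verify that $\Tw \cA$ inherits cofibrancy from $\cA$: its morphism complexes are built as finite direct sums of shifts of morphism complexes in $\cA$, and the twisted differential preserves the underlying graded module, so $K$-projectivity transfers from $\cA$ to $\Tw \cA$. Theorem~\ref{thm:4.13} then supplies a canonical equivalence
\begin{align*}
(\Tw \cA)/\cB_W \xrightarrow{\sim} \bfD(\cA | \cB_W)
\end{align*}
in $\cC at^u_{A_\infty}$, realized by the Lyubashenko--Ovsienko functor $LO$; this is precisely the comparison already recorded inside the statement of the corollary.

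Since any equivalence in $\cC at^u_{A_\infty}$ is essentially surjective on equivalence classes of objects, the full subcategory of $(\Tw \cA)/\cB_W$ spanned by the essential image of $\cA$ is transported to the full subcategory of $\bfD(\cA | \cB_W)$ spanned by the essential image of the composite $\cA \hookrightarrow \Tw \cA \to \bfD(\cA | \cB_W)$. Combining this with Theorem~\ref{thm:5.9} yields the desired equivalence $\cA[W^{-1}] \simeq \cL_W$ identified with the full subcategory described in the statement.

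The only non-formal ingredient beyond quoting Theorems~\ref{thm:5.9} and~\ref{thm:4.13} is the cofibrancy of $\Tw \cA$, together with the transport of essential images under an equivalence in $\cC at^u_{A_\infty}$; both are routine, so I anticipate no genuine obstacle. The main conceptual content of the corollary lies entirely in the two theorems being combined.
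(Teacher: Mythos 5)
Your proposal matches the paper's own treatment: the paper obtains Corollary~\ref{cor:5.13} (quoted from [OT25, Thm.~5.13]) precisely by combining \pref{thm:5.9} with \pref{thm:4.13} applied to $\cB_W \hookrightarrow \Tw \cA$, which is exactly your route. The extra details you supply (cofibrancy of $\Tw \cA$, best justified via the finite filtration by the triangular twisted differential rather than merely the preservation of the underlying graded module, and the transport of the essential image along the equivalence $(\Tw \cA)/\cB_W \simeq \bfD(\cA \mid \cB_W)$) are routine verifications that the paper leaves implicit.
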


As an application of Corollary
\pref{cor:5.13},
one obtains the following result,
which allows us to compute the morphism complexes in
$\cA[W^{-1}]$
as that in
$\bfD(\cA | \cB_W)$.
See also
\cite[Lem. 3.16]{GPS1}.

\begin{theorem}[{\cite[Thm. 5.14]{OT25}}] \label{thm:5.14}
Let
$\cA$
be a cofibrant unital $A_\infty$-category.
Fix a sequence of objects
$\cdots \to X_1 \to X_0$
in
$\cA$
defining a functor
$\bZ_{\geq 0} \to N(\cA)$
from the partially ordered set of nonnegative integers to the $A_\infty$-nerve of
$\cA$.
Suppose that
for all representatives
$K \to K^\prime$
of any element of
$W$
the induced map
\begin{align*}
\varinjlim_i \cA(X_i, K) \to \varinjlim_i \cA(X_i, K^\prime)
\end{align*}
is a quasi-isomorphism.
Then for every
$Y \in \cA$
so is the induced map
\begin{align*}
\varinjlim_i \hom_\cA(X_i, Y) \to \varinjlim_i \hom_{\cA[W^{-1}]}(X_i, Y).
\end{align*}
\end{theorem}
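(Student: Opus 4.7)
The plan is to compute the localization morphism complexes via the Lyubashenko--Ovsienko model and then exploit exactness of filtered colimits.

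First, Corollary~\pref{cor:5.13} identifies $\cA[W^{-1}]$ with the full subcategory of $\bfD(\Tw\cA \mid \cB_W)$ spanned by the image of $\cA \hookrightarrow \Tw\cA$, so $\hom_{\cA[W^{-1}]}(X_i, Y)$ is computed inside $\bfD$. I would then invoke the explicit bar-type description of $\hom_{\bfD}$: as a graded space it decomposes as $\hom_{\Tw\cA}(X, Y)$ direct-summed with
\begin{equation*}
\bigoplus_{n \geq 1}\ \bigoplus_{B_1, \ldots, B_n \in \cB_W} \hom_{\Tw\cA}(X, B_1) \otimes s\hom_{\Tw\cA}(B_1, B_2) \otimes \cdots \otimes s\hom_{\Tw\cA}(B_n, Y),
\end{equation*}
equipped with a differential preserving the increasing filtration $F_p$ by bar-degree $n \leq p$. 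For $X, Y \in \cA$ the $F_0$-piece is $\hom_\cA(X, Y)$, and the comparison map in the theorem is its inclusion into $\hom_{\bfD}(X_i, Y)$.

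Second, I would establish the key acyclicity $\varinjlim_i \hom_{\Tw\cA}(X_i, B) \simeq 0$ for every $B \in \cB_W$. By construction, $B$ is the twisted-complex cone of some representative $K \to K^\prime$ of an element of $W$, producing an exact triangle
\begin{equation*}
\hom_\cA(X_i, K) \to \hom_\cA(X_i, K^\prime) \to \hom_{\Tw\cA}(X_i, B)
\end{equation*}
of cochain complexes, functorial in $i$. Exactness of filtered colimits of complexes, combined with the hypothesis that the first arrow becomes a quasi-isomorphism after applying $\varinjlim_i$, forces the third term to become acyclic in the colimit. Since tensoring an acyclic $K$-projective complex with anything remains acyclic over a field, each $n \geq 1$ bar summand vanishes in colimit at the associated-graded level, while the $n = 0$ piece yields $\varinjlim_i \hom_\cA(X_i, Y)$ unchanged.

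The main obstacle is upgrading this vanishing of the associated graded to a quasi-isomorphism of total complexes, since the bar differential couples different bar-degrees. I would handle this via the spectral sequence associated to $F_p$ after applying $\varinjlim_i$: its $E_1$-page vanishes in positive bar-degrees and equals $\varinjlim_i \hom_\cA(X_i, Y)$ in bar-degree zero. The genuinely delicate point is strong convergence, because the bar complex is unbounded in $n$ in general; here I would use cofibrancy of $\cA$ and $K$-projectivity of its morphism complexes, together with exactness of filtered colimits, to secure convergence of the colimited filtration. Once convergence is in hand, $E_1$-degeneration identifies $\varinjlim_i \hom_{\bfD}(X_i, Y)$ with $\varinjlim_i \hom_\cA(X_i, Y)$, giving the theorem.
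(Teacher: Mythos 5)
The paper does not prove this statement at all: it is imported verbatim from \cite{OT25} (Thm.~5.14), with only the pointer ``see also \cite{GPS1}, Lem.~3.16'' whose proof is exactly the bar-filtration argument you outline. So your route is the expected one, and in outline it is correct: Corollary~\pref{cor:5.13} lets you compute $\hom_{\cA[W^{-1}]}(X_i,Y)$ inside the quotient model, the comparison map is the inclusion of the bar-degree-zero piece, and the cone triangle plus exactness of filtered colimits gives $\varinjlim_i \hom_{\Tw\cA}(X_i,B)$ acyclic for every $B\in\cB_W$. Two points need repair, though. First, your tensor step is justified ``over a field'', but the paper works with $\bZ$-linear $A_\infty$-categories, so acyclicity of one tensor factor does not by itself kill the positive bar-degree graded pieces. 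The fix lies inside the paper's hypotheses: only the leftmost factor $\hom_{\Tw\cA}(X_i,B_1)$ depends on $i$; cofibrancy makes the hom complexes $K$-projective, hence $K$-flat (Spaltenstein, as quoted in Section~2.2), $K$-flatness passes to filtered colimits, and an acyclic $K$-flat complex tensored with anything is acyclic. Second, the point you single out as ``genuinely delicate'' --- strong convergence --- is not delicate and is not what cofibrancy is for: the bar filtration is increasing, bounded below and exhaustive, and it remains so after applying $\varinjlim_i$; since cohomology commutes with filtered colimits, the quotient of $\varinjlim_i\hom_{\bfD(\Tw\cA|\cB_W)}(X_i,Y)$ by its bar-degree-zero part is the colimit of its finite filtration stages, each acyclic by induction on the length once the graded pieces vanish, so no completeness or $K$-projectivity input is needed there. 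Cofibrancy is instead what legitimizes using the quotient model in the first place (Theorem~\pref{thm:5.9}, Theorem~\pref{thm:4.13}, Corollary~\pref{cor:5.13}) and the $K$-flatness step above. With those two adjustments your argument is a faithful reconstruction of the proof the paper delegates to \cite{OT25}.
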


\section{The original axiomatic construction}
In this section,
we review the axiomatic construction of the wrapped Fukaya categories developed by Ganatra--Pardon--Shende in
\cite[Sec. 2.3]{GPS2}.
For convenience of the reader,
we include the proof of
\cite[Lem. 2.18]{GPS2}.
What we newly add is
Caution
\pref{ctn:1}
and
Caution
\pref{ctn:2}.

\subsection{Abstract Floer setups}
\begin{definition}[{\cite[Def. 2.9]{GPS2}}] \label{dfn:cS}
An
\emph{abstract Floer setup}
$\cS$
consists of the following data.
\begin{itemize}
\item[(i)]
A set
$\cL$
of \emph{Lagrangians}.
\item[(ii)]
Subsets
$\cL_k \subset \cL^{k+1}$
of
\emph{composable tuples}
for integers
$k \geq 1$
which are closed under
passing to subsequences:
if
$(L_0, \ldots, L_k) \in \cL_k$
then
$(L_{i_0}, \ldots, L_{i_l}) \in \cL_l$
for every
$0 \leq i_0 < \cdots <i_l \leq k$.
\item[(iii)]
Graded modules
$CF(L, K)$
for all
$(L, K) \in \cL_1$.
\item[(iv)]
A contravariant functor
$D$
defined by specifying
sets
$D(L_0, \ldots, L_k)$
of
\emph{Floer data}
for all
$(L_0, \ldots, L_k) \in \cL_k$
and
restriction maps
$D(L_0, \ldots, L_k) \to D(L_{i_0}, \ldots, L_{i_l})$
along inclusions of subsequences for every
$0 \leq i_0 < \cdots <i_l \leq k$
with
$l \geq 1$.
\item[(v)]
Contractibility of Floer data:
the map from
$D(L_0, \ldots, L_k)$
to the limit of
$D$
applied to all proper subsequences of
$(L_0, \ldots, L_k)$
must be surjective.
In particular,
$D(L_0, L_1)$
is nonempty
as
$D(L)$
consists of a single point for every
$L \in \cL$.
\item[(vi)]
For every element
$\delta(L_0, \ldots, L_k) \in D(L_0, \ldots, L_k)$
a linear map
\begin{align*}
\mu^k_{\delta(L_0, \ldots, L_k)} \colon CF(L_0, L_1) \otimes \cdots \otimes CF(L_{k-1},L_k) \to CF(L_0, L_k)[2-k]
\end{align*}
satisfying the
$A_\infty$-relations with respect to the restriction maps on
$D$.
In particular,
$(CF(L_0, L_1), \mu^1_{\delta(L, K)})$
is a cochain complex for every
$(L, K) \in \cL_1$
and
$\delta(L, K) \in D(L, K)$.
We require each such complex to be cofibrant in the sense of
\cite[Def. 3.2]{GPS1}.
\item[(vii)]
Sets
$D^\prime(L, K), D^{\prime \prime }(L, K)$
and
$D^{\prime \prime \prime}_i(L_0, L_1, L_2)$
for
$i = 0, 1, 2$
together with surjective maps respectively to
$D(L, K) \times D(L, K), D^\prime(L, K) \times_{D(L, K) \times D(L, K)} (D^\prime(L, K) \times_{D(L, K)} D^\prime(L, K))$
and
\begin{align*}
\begin{gathered}
D(L_0, L_1, L_2) \times_{D(L_0, L_1) \times D(L_1, L_2) \times D(L_0, L_2)} (D(L_0, L_1, L_2) \times_{D(L_0, L_1)} D^\prime(L_0, L_1)), \\
D(L_0, L_1, L_2) \times_{D(L_0, L_1) \times D(L_1, L_2) \times D(L_0, L_2)} (D(L_0, L_1, L_2) \times_{D(L_1, L_2)} D^\prime(L_1, L_2)), \\
D(L_0, L_1, L_2) \times_{D(L_0, L_1) \times D(L_1, L_2) \times D(L_0, L_2)} (D(L_0, L_1, L_2) \times_{D(L_0, L_2)} D^\prime(L_0, L_2)).
\end{gathered}
\end{align*}
\item[(viii)]
For every
$\delta^\prime(L, K) \in D^\prime(L, K)$
a chain map
\begin{align*}
\alpha_{\delta^\prime(L, K)} \colon CF(L, K) \to CF(L, K)
\end{align*}
where the
domain
and
codomain
are equipped with the differentials associated with the image of
$\delta^\prime(L, K)$
under the above surjective map from
$D^\prime(L, K)$.
For every
$\delta^{\prime \prime}(L, K) \in D^{\prime \prime}(L, K)$
a chain homotopy
\begin{align*}
\beta_{\delta^{\prime \prime}(L, K)} \colon CF(L, K) \to CF(L, K)[-1]
\end{align*}
between an
$\alpha$
map
and
the composition of two
$\alpha$
maps associated with the image of
$\delta^{\prime \prime}(L, K)$
under the above surjective map from
$D^{\prime \prime}(L, K)$.
For every
$\delta^{\prime \prime \prime}(L_0, L_1, L_2) \in D^{\prime \prime \prime}_i(L_0, L_1, L_2)$
a chain homotopy
\begin{align*}
\gamma_{\delta^{\prime \prime \prime}(L_0, L_1, L_2)} \colon CF(L_0, L_1) \otimes CF(L_1, L_2) \to CF(L_0, L_2)[-1]
\end{align*}
between a
$\mu^2$
map
and
the composition of an
$\alpha$
with a
$\mu^2$
map associated with the image of
$\delta^{\prime \prime \prime}(L_0, L_1, L_2)$
under the above surjective map from
$D^{\prime \prime \prime}(L_0, L_1, L_2)$.
\item[(ix)]
A map
$f \colon D(L, K) \to D^{\prime}(L, K)$
whose composition with the above surjective map from
$D^\prime(L, K)$
is the diagonal
and
such that
every map
$\alpha_{f(\delta(L, K))}$
is the identity.
\end{itemize}
\end{definition}

We denote by
$\cF^{\text{pre}}_\cS$
the $A_\infty$-pre-category specified by data
(i)-(iv)
and
(vi).
As mentioned in
\cite[Rem. 2.7]{GPS2},
data
(i), (ii), (iv)
correspond to the semisimplicial set from
\cite[Def. 2.6]{GPS2}.
Note that
datum
(v)
allows one to construct a compatible collection of Floer data inductively.
The other data force all possibilities of such collection to define a single cohomology pre-category
$H \cF^{\text{pre}}_\cS$
up to unique equivalence.

\begin{definition}[{\cite[Def. 2.10]{GPS2}}] \label{dfn:2.10}
The
\emph{Donaldson--Fukaya pre-category}
$h \cF^{\text{pre}}_{\DF, \cS}$
of an abstract Floer setup
$\cS$
consists of the following data.
\begin{itemize}
\item[(i)]
The set
$\cL$
of Lagrangians from
$\cS$.
\item[(ii)]
The subsets
$\cL_k \subset \cL^{k+1}$
of
composable tuples from
$\cS$.
\item[(iii)]
The homotopy classes
$hF(L, K)$
of
$CF(L, K)$
from
$\cS$
for all
$(L, K) \in \cL_1$.
\item[(iv)]
For every
$(L_0, L_1, L_2) \in \cL_2$
a composition map
$hF(L_0, L_1) \otimes hF(L_1, L_2) \to hF(L_0, L_2)$.
\item[(v)]
For every
$(L_0, L_1, L_2, L_3) \in \cL_3$
the two maps from
$hF(L_0, L_1) \otimes hF(L_1, L_2) \otimes hF(L_2, L_3)$
to
$hF(L_0, L_3)$
obtained by composing in either order agree.
\end{itemize}
These are constructed from the data of
$\cS$
as follows.
For
$(L, K) \in \cL_1$
every
$\delta(L, K) \in D(L, K)$
determines a differential
$\mu^1_{\delta(L, K)}$
on
$CF(L, K)$.
If
$\delta_1(L, K), \delta_2(L, K) \in D(L, K)$
then there are
$\delta^\prime_{12}(L, K), \delta^\prime_{21}(L, K) \in D^\prime(L, K)$
which determine chain maps
\begin{align*}
\begin{gathered}
\alpha_{\delta^\prime_{12}(L, K)} \colon (CF(L, K), \mu^1_{\delta_1(L, K)})  \to (CF(L, K), \mu^1_{\delta_2(L, K)}), \\
\alpha_{\delta^\prime_{21}(L, K)} \colon (CF(L, K), \mu^1_{\delta_2(L, K)})  \to (CF(L, K), \mu^1_{\delta_1(L, K)}).
\end{gathered}
\end{align*}
By data
(vii), (viii)
in Definition
\pref{dfn:cS}
there are
$\delta^{\prime \prime}_{12}(L, K), \delta^{\prime \prime}_{21}(L, K) \in D^{\prime \prime}(L, K)$
such that
$\beta_{\delta^{\prime \prime}_{12}(L, K)}, \beta_{\delta^{\prime \prime}_{21}(L, K)}$
give chain homotopies between
$\alpha_{\delta^\prime_{21}(L, K)} \circ \alpha_{\delta^\prime_{12}(L, K)}, \alpha_{\delta^\prime_{12}(L, K)} \circ \alpha_{\delta^\prime_{21}(L, K)}$
and
the identities
$\alpha_{f_1(\delta_1(L, K))}, \alpha_{f_2(\delta_2(L, K))}$
for the maps
$f_1, f_2 \colon D(L, K) \to D(L, K)$
from datum
(ix).
This defines a cofibrant object
$hF(L, K)$
in the homotopy category of complexes up to unique equivalence.
For
$(L_0, L_1, L_2) \in \cL_2$
every
$\delta_{012}(L_0, L_1, L_2) \in D(L_0, L_1, L_2)$
determines a composition map
\begin{align*}
\mu^2_{\delta_{012}(L_0, L_1, L_2)} \colon CF(L_0, L_1) \otimes CF(L_1, L_2) \to CF(L_0, L_2).
\end{align*}
If
$\delta^\prime_{01}(L_0, L_1) \in D^{\prime}(L_0, L_1), \delta^\prime_{12}(L_1, L_2) \in D^\prime(L_1, L_2)$
and
$\delta^\prime_{02}(L_0, L_2) \in D^\prime(L_0, L_2)$
then there exist
$\delta^{\prime \prime \prime}_{01}(L_0, L_1, L_2) \in D^{\prime \prime \prime}_0(L_0, L_1, L_2), \delta^{\prime \prime \prime}_{12}(L_0, L_1, L_2) \in D^{\prime \prime \prime}_1(L_0, L_1, L_2)$
and
$\delta^{\prime \prime \prime}_{02}(L_0, L_1, L_2) \in D^{\prime \prime \prime}_2(L_0, L_1, L_2)$
which determine chain homotopies
$\gamma_{\delta^{\prime \prime \prime}_{01}(L_0, L_1, L_2)},
\gamma_{\delta^{\prime \prime \prime}_{12}(L_0, L_1, L_2)},
\gamma_{\delta^{\prime \prime \prime}_{02}(L_0, L_1, L_2)}$
between
$\mu^2_{\delta_{012}(L_0, L_1, L_2)}$
and
\begin{align*}
\mu^2_{\delta_{012,0}(L_0, L_1, L_2)} \circ (\alpha_{\delta^\prime_{01}(L_0, L_1)} \otimes \id), \
\mu^2_{\delta_{012,1}(L_0, L_1, L_2)} \circ (\id \otimes \alpha_{\delta^\prime_{12}(L_1, L_2)}), \
\alpha_{\delta^\prime_{02}(L_0, L_2)} \circ \mu^2_{\delta_{012,2}(L_0, L_1, L_2)}
\end{align*}
respectively for some
$\delta_{012,0}(L_0, L_1, L_2), \delta_{012,1}(L_0, L_1, L_2), \delta_{012,2}(L_0, L_1, L_2) \in D(L_0, L_1, L_2)$.
Hence a composition map descends to that of homotopy classes.
The associativity follows from datum
(vi)
in Definition
\pref{dfn:cS}.
\end{definition}

\subsection{Abstract wrapped Floer setups}
\begin{definition}[{\cite[Def. 2.11]{GPS2}}] \label{dfn:2.11}
An
\emph{abstract wrapped Floer setup}
$(\cS, H \cF^{\text{env}}_\cS, \tilde{C}, \{ \tilde{R}_L \}_L)$
is an abstract Floer setup
$\cS$
together with the following.
\begin{itemize}
\item[(i)]
An
\emph{envelope}
$H \cF^{\text{env}}_\cS$
for the cohomology pre-category
$H \cF^{\text{pre}}_\cS$:
a small category with object set
$\cL$
from
$\cS$
enriched over graded modules
$H \cF^{\text{env}}_\cS(L, K)$
for all
$L, K \in \cL$,
which coincide with
$H \cF^{\text{pre}}_\cS(L, K)$
if
$(L, K) \in \cL_1$.
\item[(ii)]
A set
$\tilde{C}$
of
\emph{continuation maps},
morphisms in
$H^0 \cF^{\text{env}}_\cS$
closed under composition.
\item[(iii)]
For all
$L \in \cL$
the
\emph{wrapping category}
$\tilde{R}_L$
of
$L$,
a filtered category of countable cofinality together with a functor
$\tilde{R}_L \to ((H^0 \cF^{\text{env}}_\cS)_{/_{\tilde{C}} L})^{op}$.
Here,
$(H^0 \cF^{\text{env}}_\cS)_{/_{\tilde{C}} L}$
is the
\emph{continuation slice category}
of
$L$,
i.e.
the full subcategory of the slice category
$(H^0 \cF^{\text{env}}_\cS)_{/ L}$
spanned by continuation maps
$L^w \to L$.
We define
\begin{align*}
HW_{\cS, \tilde{C}} (L, K) = \varinjlim_{L^w \in \tilde{R}_L} H \cF^{\text{env}}_\cS(L^w, K)
\end{align*}
and
require it to satisfy the
\emph{right locality property}:
the map
$HW_{\cS, \tilde{C}}(L, K) \to HW_{\cS, \tilde{C}}(L, K^\prime)$
given by multiplying on the right by any continuation map
$K \to K^\prime$
is an isomorphism.
\item[(iv)]
Every morphism
$A \leadsto B$
in
$\tilde{R}_L$
must satisfy the
\emph{factorization property}:
for all finite sets
$\bfK = \{ (K^j_1, \ldots, K^j_{a_j}) \}_j$
of composable tuples,
there exists a factorization
$A \leadsto H \leadsto B$
in
$\tilde{R}_L$
such that
each
$(H, K^j_1, \ldots, K^j_{a_j})$
with
$H$
regarded as the image in
$\cL$
belongs to
$\cL_{a_j}$.
Moreover,
we require that
there always are uncountably many possibilities for the Lagrangian in
$\cL$
corresponding to
$H$.
\end{itemize}
\end{definition}

\begin{definition}[{\cite[Def. 2.13]{GPS2}}] \label{dfn:2.13}
The
\emph{wrapped Donaldson--Fukaya category}
$H \cW_{\DF, \cS, \tilde{C}}$
of an abstract wrapped Floer setup
$(\cS, H \cF^{\text{env}}_\cS, \tilde{C}, \{ \tilde{R}_L \}_L)$
is the full subcategory of
$\Pro H \cF^{\text{env}}_\cS$
spanned by
$\tilde{R}_L$
for all
$L \in \cL$.
In other words,
the objects of
$H \cW_{\DF, \cS, \tilde{C}}$
are Lagrangians in
$\cL$
from
$\cS$
and
for every
$L, K \in \cL$
the morphisms are
\begin{align*}
H \cW_{\DF, \cS, \tilde{C}}(L, K)
=
\varprojlim_{K^w \in \tilde{R}_K} \varinjlim_{L^w \in \tilde{R}_L} H\cF^{\text{env}}_\cS(L^w, K^w)
=
HW_{\cS, \tilde{C}}(L, K)    
\end{align*}
with the evident notion of composition.
\end{definition}

There is a canonical functor
$H \cF^{\text{env}}_\cS \to H \cW_{\DF, \cS, \tilde{C}}$
induced by the maps
\begin{align*}
H \cF^{\text{env}}_\cS(L, K) \to H \cF^{\text{env}}_\cS(L^w, K) \to HW_{\cS, \tilde{C}}(L, K)
\end{align*}
for
all
$L, K \in \cL$
and
any
$L^w \in \tilde{R}_L$.
Since
$HW_{\cS, \tilde{C}}(L, K)$
satisfy the right locality property,
the maps are compatible with composition.

\begin{lemma}[{\cite[Lem. 2.14]{GPS2}}] \label{lem:2.14}
For any category
$\cC$,
the pullback along the canonical functor
\begin{align*}
\Fun(H \cW_{\DF, \cS, \tilde{C}}, \cC) \to \Fun(H \cF^{\text{env}}_\cS, \cC)
\end{align*}
between ordinary functor categories
is fully faithful
and
its essential image consists of those functors
which send continuation maps to isomorphisms.
\end{lemma}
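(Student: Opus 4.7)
The plan is to identify $H \cW_{\DF, \cS, \tilde{C}}$ as the ordinary categorical localization of $H \cF^{\text{env}}_\cS$ at the continuation maps $\tilde{C}$. Since the canonical functor $\pi \colon H \cF^{\text{env}}_\cS \to H \cW_{\DF, \cS, \tilde{C}}$ is the identity on objects, I would first verify that $\pi$ sends every $c \colon L' \to L$ in $\tilde{C}$ to an isomorphism. Its image $\pi(c) \in HW_{\cS, \tilde{C}}(L', L)$ is represented by any composition $L'^w \to L' \xrightarrow{c} L$ for $L'^w \in \tilde{R}_{L'}$, and the right locality property combined with filteredness of the wrapping categories produces a two-sided inverse in $HW_{\cS, \tilde{C}}(L, L')$.

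For the essential image, given $F \colon H \cF^{\text{env}}_\cS \to \cC$ sending every continuation map to an isomorphism, I would construct a factorization $\tilde{F} \colon H \cW_{\DF, \cS, \tilde{C}} \to \cC$ by setting $\tilde{F}(L) = F(L)$ on objects and $\tilde{F}(\alpha) = F(f) \circ F(w_L)^{-1}$ on a morphism $\alpha \in HW_{\cS, \tilde{C}}(L, K)$ represented by some $f \colon L^w \to K$, where $w_L \colon L^w \to L$ is the continuation map attached to $L^w \in \tilde{R}_L$. Well-definedness reduces to filteredness of $\tilde{R}_L$: any two representatives $f_1 \colon L^{w_1} \to K$ and $f_2 \colon L^{w_2} \to K$ can be pulled back along continuation maps to a common refinement $L^{w_3}$ on which they agree in $H \cF^{\text{env}}_\cS(L^{w_3}, K)$, and applying $F$ converts this into the required equality after inverting the images of the continuations.

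The main technical step is verifying functoriality. Given $\alpha \colon L \to K$ and $\beta \colon K \to M$ represented by $f \colon L^w \to K$ and $g \colon K^w \to M$, I would use the right locality isomorphism $HW_{\cS, \tilde{C}}(L, K^w) \xrightarrow{\sim} HW_{\cS, \tilde{C}}(L, K)$ to lift $\alpha$ to some $f' \colon L^{w'} \to K^w$ with $(K^w \to K) \circ f'$ representing the same class as $f$, so that $\beta \circ \alpha$ is represented by $g \circ f' \colon L^{w'} \to M$. Pulling $(K^w \to K) \circ f'$ and $f$ back to a common refinement in $\tilde{R}_L$, applying $F$, and using that the structure map $\tilde{R}_L \to ((H^0 \cF^{\text{env}}_\cS)_{/_{\tilde{C}} L})^{op}$ is a functor so compositions of the relevant continuations are coherent, one obtains the identity $F(K^w \to K) \circ F(f') \circ F(w_{L^{w'}})^{-1} = F(f) \circ F(w_L)^{-1}$, which combined with the definition of $\tilde{F}(\beta)$ yields $\tilde{F}(\beta \circ \alpha) = \tilde{F}(\beta) \circ \tilde{F}(\alpha)$.

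For full faithfulness of the pullback, since $\pi$ is the identity on objects, any two natural transformations $G_1 \Rightarrow G_2$ with the same restriction to $H \cF^{\text{env}}_\cS$ must coincide pointwise, giving faithfulness; for fullness, a natural transformation $\eta \colon G_1 \circ \pi \Rightarrow G_2 \circ \pi$ extends uniquely by $\tilde{\eta}_L = \eta_L$, with naturality against any $\alpha \in HW_{\cS, \tilde{C}}(L, K)$ reducing via the representation $\alpha = \pi(f) \circ \pi(w_L)^{-1}$ to naturality of $\eta$ for $f$ and $w_L$ together with invertibility of $G_i(\pi(w_L))$. The main obstacle throughout will be keeping track of how filteredness of $\tilde{R}_L$ interacts with the right locality property to guarantee well-definedness on the colimits defining $HW_{\cS, \tilde{C}}$, especially in the functoriality check where one has to simultaneously refine representatives in both the source and the target wrapping directions.
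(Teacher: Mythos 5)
Your proposal is correct and follows essentially the intended route: the paper itself gives no proof of this lemma (it is quoted from \cite[Lem. 2.14]{GPS2}, and its weak analogue is attributed to the calculus of right fractions via \cite[Prop. 1.3(iv)]{GZ}), and your argument --- writing every morphism of $H \cW_{\DF, \cS, \tilde{C}}$ as a right fraction $\pi(f) \circ \pi(w_L)^{-1}$, checking well-definedness and functoriality of the induced functor via filteredness of $\tilde{R}_L$ together with the right locality property, and deducing full faithfulness of the pullback from $\pi$ being bijective on objects --- is precisely the explicit form of that fraction calculus, matching the proof in \cite{GPS2}. One small point to keep straight: the transition maps of $\tilde{R}_L$ map to morphisms of the continuation slice category and hence need not themselves lie in $\tilde{C}$, but your computations only ever invert the legs $w_L \in \tilde{C}$ of the commuting triangles over $L$, so nothing breaks.
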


\subsection{Universal decorated posets}
\begin{definition}[{\cite[Def. 2.8]{GPS2}}]
A
\emph{decorated poset}
$\eta \colon P \to \cF^{\text{pre}}_\cS$
for an abstract Floer setup
$\cS$
is a poset
$P$
together with a map
$\eta$
from the nerve of
$P$
to the semisimplicial set associated with
$\cF^{\text{pre}}_\cS$.
The map
$\eta$
sends each totally ordered tuple
$p_k > \cdots > p_0$
in
$P$
to a pair of
a composable tuple
$(L_{p_k}, \ldots, L_{p_0}) \in \cL_k$
and
an element
$\eta(p_k, \cdots, p_0) \in D(L_{p_k}, \ldots, L_{p_0})$.
\end{definition}

In particular,
$\eta$
sends each element
$p \in P$
to a Lagrangian
$L_p \in \cL$.
We denote by
$\cO_P$
the cofibrant strictly unital $A_\infty$-category 
whose objects are elements
$p \in P$
and
for every
$p, q \in P$
whose morphisms are
\begin{align*}
\cO_P(p, q) =
\begin{cases}
CF(L_p, L_q)  & p > q, \\
\bZ \langle 1_p \rangle & p = q, \\
0 & \text{else}.
\end{cases}
\end{align*}
Here,
the $A_\infty$-operations for totally ordered tuples
$p_k > \cdots > p_0$
are
$\mu^k_{\eta(p_k, \ldots, p_0)}$
and
we require all
$1_p$
to be a strict unit.
Let
$I_{P, \tilde{C}}$
be the set of morphisms in
$H^0 \cO_P$
determined by
$P$
and
the set
$\tilde{C} \subset H^0 \cF^{\text{env}}_{\cS}$
of continuation maps.
We denote by
$\cW_{P, \tilde{C}}$
the localization
$\cO_P[I^{-1}_{P, \tilde{C}}]$.

\begin{definition}
A decorated poset
$P$
has
\emph{no duplicates}
if
$P^{\leq p}, P^{\leq q}$
are not isomorphic as decorated posets
whenever
$p \neq q$.
An
\emph{inclusion}
$P \subset P^\prime$
of decorated posets is an inclusion of underlying posets
such that
the decoration
$\eta$
on
$P$
is the restriction of that
$\eta^\prime$
on
$P^\prime$.
It is
\emph{downward closed}
if in addition
$P^{\prime \leq p} = P^{\leq p}$
for all
$p \in P$.
\end{definition}

Given any two decorated posets with no duplicates
$P, P^\prime$,
there is at most one downward closed inclusion
$P \subset P^\prime$.
Recall that
a poset
$P$
is
\emph{cofinite}
if
$P^{\leq p}$
is finite for all
$p \in P$.
Let
$\Pos_\cS$
be the ordinary category
whose objects are countable cofinite decorated posets for
$\cS$
with no duplicates
and
whose morphisms are downward closed inclusions.
In
\cite[Lem. 3.42]{GPS1}
Ganatra--Pardon--Shende introduced the universal decorated poset
\begin{align*}
\eta_{\text{univ}}(\cS)
\colon
P_{\text{univ}}(\cS)
\to
\cF^{\text{pre}}_\cS
\end{align*}
which corresponds to the full subcategory of
$\Pos_\cS$
spanned by objects of the form
$P^{\leq p}$.
By construction for every
$p \in P_{\text{univ}}(\cS)$
there exists some
$P^{\leq p} \in \Pos_\cS$
with
$P_{\text{univ}}(\cS)^{\leq p} = P^{\leq p}$.
According to
\cite[Sec. 2.3]{GPS2},
each
$p \in P_{\text{univ}}(\cS)$
corresponds to an isomorphism class
$[P^{\leq p}]$
of countable cofinite decorated posets with no duplicates.
Moreover,
every
$P \in \Pos_\cS$
admits a unique downward closed inclusion
$P \subset P_{\text{univ}}(\cS)$.

\begin{remark} \label{rmk:ss2}
Here,
the isomorphism class
$[P^{\leq p}]$
must mean that
any two representatives have isomorphic underlying posets each pair of
whose corresponding nerves map to the same image.
Otherwise,
the image of
$\eta_{\text{univ}}(\cS)$
would not cover
distinct Lagrangians
$L, K \in \cL$
nor a composable tuple
$(L_0, \cdots, L_k) \in \cL_k$
with distinct Floer data
$\delta(L_0, \ldots, L_k), \delta^\prime(L_0, \ldots, L_k) \in D(L_0, \ldots, L_k)$.
Note that
two distinctive decorated posets with a common underlying poset are not isomorphic in
$\Pos_\cS$,
as by definition of inclusions of decorated posets the restriction of the decoration on one must coincide with that on the other.
\end{remark}

\begin{caution} \label{ctn:1}
As mentioned above,
for any
$p \in P_{\text{univ}}(\cS)$
we have
$P^{\leq p}_{\text{univ}}(\cS) = P^{\leq p}$
for some
$P^{\leq p} \in \Pos_\cS$.
Let
$P^{\leq p}_\circ \subset P^{\leq p}$
be a decorated subposet obtained by removing one of the minimum elements,
which makes sense
as
$P_{\text{univ}}(\cS)$
is cofinite.
While the canonical inclusion
$P^{\leq p} \subset P^{\leq p}_{\text{univ}}(\cS)$
is downward closed,
the one
$P^{\leq p}_\circ \subset P^{\leq p}_{\text{univ}}(\cS)$
is not
since
$P^{\leq p}_{\text{univ}}(\cS)$
contains the removed element.
\end{caution}

\begin{definition}[{\cite[Def. 2.20]{GPS2}}]
The
\emph{wrapped Fukaya category}
$\cW_{\cS, \tilde{C}}$
of an abstract wrapped Floer setup
$(\cS, H \cF^{\text{env}}_\cS, \tilde{C}, \{ \tilde{R}_L \}_L)$
is
$\cW_{P_{\text{univ}}(\cS), \tilde{C}}$.
\end{definition}

\subsection{Wrapping sequences}
\begin{definition}[{\cite[Def. 2.15]{GPS2}}]
Let
$\eta \colon P \to \cF^{\text{pre}}_\cS$
be a decorated poset for
$\cS$.
For an abstract wrapped Floer setup
$(\cS, H \cF^{\text{env}}_\cS, \tilde{C}, \{ \tilde{R}_L \}_L)$
a
\emph{$P$-wrapping sequence}
is a cofinal sequence
$p_0 < p_1 < \cdots$
in
$P$
along with elements of
$I_{P, \tilde{C}}$
in
$H^0 \cO_P(p_{i+1}, p_i) = H^0 \cF^{\text{env}}_\cS(L_{p_{i+1}}, L_{p_i})$
such that
the canonical map
\begin{align*}
\varinjlim_i H \cF^{\text{env}}_\cS(L_{p_i}, K)
\to
\varinjlim_i HW_{\cS, \tilde{C}}(L_{p_i}, K)
=
HW_{\cS, \tilde{C}}(L_{p_0}, K)
\end{align*}
is an isomorphism for every
$K \in \cL$.
A decorated poset
$P$
for
$\cS$
is
\emph{sufficiently wrapped}
with respect to
$(\cS, H \cF^{\text{env}}_\cS, \tilde{C}, \{ \tilde{R}_L \}_L)$
if every
$p \in P$
belongs to a $P$-wrapping sequence for
$(\cS, H \cF^{\text{env}}_\cS, \tilde{C}, \{ \tilde{R}_L \}_L)$.
\end{definition}

Given any isomorphism
$L_p \xrightarrow{\sim} L_q$
in
$H \cW_{\DF, \cS, \tilde{C}}$
and
a
$P$-wrapping sequence
$p = p_0 < p_1 < \cdots$,
there is an element of
$I_{P, \tilde{C}}$
in
$H^0 \cO_P(p_i, q)$
for some
$p_i \in P$
which is sent to the isomorphism.
Hence
$L_p \cong L_q$
in
$H \cW_{\DF, \cS, \tilde{C}}$
implies that
$p$
belongs to a $P$-wrapping sequence for
$(\cS, H \cF^{\text{env}}_\cS, \tilde{C}, \{ \tilde{R}_L \}_L)$
if and only if
so does
$q$.

\begin{lemma}[{\cite[Lem. 2.16]{GPS2}}] \label{lem:2.16}
Let
$p_0 < p_1 < \cdots$
be any $P$-wrapping sequence for
$(\cS, H \cF^{\text{env}}_\cS, \tilde{C}, \{ \tilde{R}_L \}_L)$.
Then the canonical map
\begin{align*}
\varinjlim_i H \cO_P(p_i, q) \to \varinjlim_i H \cW_{P, \tilde{C}}(p_i, q)
\end{align*}
is an isomorphism.
\end{lemma}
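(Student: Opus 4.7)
The plan is to deduce the lemma directly from Theorem~\pref{thm:5.14}, applied to the cofibrant strictly unital $A_\infty$-category $\cA = \cO_P$, to $W = I_{P, \tilde{C}}$, to $Y = q$, and to the sequence $\cdots \to p_1 \to p_0$ assembled from the $P$-wrapping sequence data. Here the structure map $p_{i+1} \to p_i$ in $\cO_P$ is a chosen cochain-level representative of the prescribed element of $I_{P, \tilde{C}} \subset H^0 \cO_P(p_{i+1}, p_i)$. Since by construction $\cW_{P, \tilde{C}} = \cO_P[I^{-1}_{P, \tilde{C}}]$, once the hypothesis of Theorem~\pref{thm:5.14} is verified, taking cohomology of its conclusion produces the desired isomorphism
\begin{align*}
\varinjlim_i H \cO_P(p_i, q) \xrightarrow{\sim} \varinjlim_i H \cW_{P, \tilde{C}}(p_i, q).
\end{align*}

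The core of the argument is to check that for every representative $K \to K^\prime$ of every element of $I_{P, \tilde{C}}$, the induced map
\begin{align*}
\varinjlim_i \cO_P(p_i, K) \to \varinjlim_i \cO_P(p_i, K^\prime)
\end{align*}
is a quasi-isomorphism. By definition of $I_{P, \tilde{C}}$, any such element comes from a pair $p^\prime > q^\prime$ in $P$ together with a cohomology class $[c]$ in the subset $\tilde{C} \subset H^0 \cF^{\text{env}}_\cS(L_{p^\prime}, L_{q^\prime}) = H^0 \cO_P(p^\prime, q^\prime)$, and a representative is the chain map $\mu^2(-, c)$ for a cocycle $c$ lifting $[c]$. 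For all $i$ large enough that $p_i > p^\prime$, this reads $CF(L_{p_i}, L_{p^\prime}) \to CF(L_{p_i}, L_{q^\prime})$; passing to cohomology and filtered colimits and invoking the defining property of a $P$-wrapping sequence, the source and target become $HW_{\cS, \tilde{C}}(L_{p_0}, L_{p^\prime})$ and $HW_{\cS, \tilde{C}}(L_{p_0}, L_{q^\prime})$ respectively, under which the induced map is right multiplication by the continuation map $[c]$. The right locality property of Definition~\pref{dfn:2.11}(iii) then forces this to be an isomorphism, so the underlying chain map on direct limits is a quasi-isomorphism.

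The main obstacle lies precisely in this verification, in which three different formulations of essentially the same data must be reconciled: the chain-level input required by Theorem~\pref{thm:5.14}, the cohomological language of $H \cF^{\text{env}}_\cS$ appearing in the $P$-wrapping sequence hypothesis, and the localizing set $I_{P, \tilde{C}} \subset H^0 \cO_P$ specified through $\tilde{C}$. Once the canonical identification $H^0 \cO_P(p, q) = H^0 \cF^{\text{env}}_\cS(L_p, L_q)$ for $p > q$ is invoked, and combined with the homotopy-invariance of the quasi-isomorphism property under change of representative, the right locality of $HW_{\cS, \tilde{C}}$ closes the argument. Beyond this reconciliation, no further obstruction arises; the proof is essentially formal given Theorem~\pref{thm:5.14}.
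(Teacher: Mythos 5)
Your proposal is correct and takes essentially the same route as the source: the paper imports this lemma from \cite{GPS2} (whose proof runs through the precursor of Theorem~\pref{thm:5.14}, cf. \cite[Lem. 3.16]{GPS1}), and your argument --- apply \pref{thm:5.14} to $\cO_P$ with $W = I_{P, \tilde{C}}$ and the sequence $\cdots \to p_1 \to p_0$, verifying the hypothesis by identifying $\varinjlim_i H \cO_P(p_i, K)$ with $HW_{\cS, \tilde{C}}(L_{p_0}, K)$ via the wrapping-sequence property and then invoking right locality --- is exactly the pattern the paper itself uses for its analogue, \pref{lem:cohomology}. No gap to report.
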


\begin{lemma}[{\cite[Prop. 2.17]{GPS2}}] \label{lem:2.17}
For every inclusion
$P \subset P^\prime$
of sufficiently wrapped decorated posets
$P$
with respect to
$(\cS, H \cF^{\text{env}}_\cS, \tilde{C}, \{ \tilde{R}_L \}_L)$,
there are canonical fully faithful functors
\begin{align} \label{eq:can}
H \cW_{P, \tilde{C}} \hookrightarrow H \cW_{\DF, \cS, \tilde{C}} \hookleftarrow H \cW_{P^\prime, \tilde{C}}
\end{align}
compatible with the one
$H \cW_{P, \tilde{C}} \hookrightarrow H \cW_{P^\prime, \tilde{C}}$.
\end{lemma}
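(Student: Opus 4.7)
The plan is to construct, for each sufficiently wrapped decorated poset $P$, a functor $\Phi_P \colon H \cW_{P, \tilde{C}} \to H \cW_{\DF, \cS, \tilde{C}}$ sending $p \mapsto L_p$ on objects, show it is fully faithful, and then verify that when $P \subset P'$ is a downward closed inclusion the triangle $\Phi_{P'} \circ \iota = \Phi_P$ commutes, where $\iota \colon H \cW_{P, \tilde{C}} \to H \cW_{P', \tilde{C}}$ is the canonical functor induced by the inclusion.

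On a morphism complex, for $p, q \in P$ I would first pick a $P$-wrapping sequence $p = p_0 < p_1 < \cdots$ starting at $p$, which exists since $P$ is sufficiently wrapped. In $H \cW_{P, \tilde{C}}$ the transition maps $p_{i+1} \to p_i$ lie in the inverted set, so they become isomorphisms and pre-composition with their inverses gives canonical isomorphisms $H \cW_{P, \tilde{C}}(p, q) \xrightarrow{\sim} H \cW_{P, \tilde{C}}(p_i, q)$; passing to the colimit yields $H \cW_{P, \tilde{C}}(p, q) \xrightarrow{\sim} \varinjlim_i H \cW_{P, \tilde{C}}(p_i, q)$. Lemma~\pref{lem:2.16} provides an isomorphism $\varinjlim_i H \cO_P(p_i, q) \xrightarrow{\sim} \varinjlim_i H \cW_{P, \tilde{C}}(p_i, q)$, and since $H \cO_P(p_i, q) = H \cF^{\text{env}}_\cS(L_{p_i}, L_q)$, the defining property of a $P$-wrapping sequence gives one final isomorphism $\varinjlim_i H \cF^{\text{env}}_\cS(L_{p_i}, L_q) \xrightarrow{\sim} HW_{\cS, \tilde{C}}(L_p, L_q) = H \cW_{\DF, \cS, \tilde{C}}(L_p, L_q)$. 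Composing this chain defines $\Phi_P(p, q)$ and simultaneously proves that $\Phi_P$ is fully faithful, since every link is an isomorphism.

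There are two technical points to discharge. First, well-definedness: the composite must be independent of the chosen wrapping sequence. Given two $P$-wrapping sequences starting at $p$, I would interleave them into a single cofinal increasing sequence in $P$ carrying compatible elements of $I_{P, \tilde{C}}$; cofinal subsequences of a wrapping sequence remain wrapping sequences (both the localization colimit of Lemma~\pref{lem:2.16} and the wrapping-sequence colimit are invariant under passage to cofinal subsystems), so comparing each original sequence to the interleaved one forces the two candidate morphism maps to agree. Second, functoriality: compatibility with composition follows by choosing wrapping sequences at each source and using that composition in $H \cW_{\DF, \cS, \tilde{C}}$ is inherited from $H \cF^{\text{env}}_\cS$ together with the right locality property of $HW_{\cS, \tilde{C}}$, which matches the composition structure on the colimit side of Lemma~\pref{lem:2.16}.

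For compatibility under the inclusion $P \subset P'$, since the inclusion is downward closed the $A_\infty$-structure on $\cO_P$ is the restriction of that on $\cO_{P'}$, hence $\iota$ sends each morphism class to one represented by the same underlying element of $H \cF^{\text{env}}_\cS(L_{p_i}, L_q)$. Picking the same $P$-wrapping sequence viewed inside $P'$ is not necessarily cofinal in $P'$, so I would instead argue by extending any $P$-wrapping sequence at $p$ to a $P'$-wrapping sequence at $p$ using the sufficient-wrapping hypothesis for $P'$ and dominating both by a common refinement in $P'$; the resulting colimits agree with those of the original $P$-sequence, yielding the triangle identity. The main obstacle I anticipate is the bookkeeping around independence of wrapping sequence and compatibility of the interleaving with the $I_{P, \tilde{C}}$ and $\tilde{C}$ decorations; once this is settled, fully faithfulness and the comparison under $P \subset P'$ are formal consequences of the isomorphism chain.
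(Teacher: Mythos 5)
Your computational backbone is the right one (and matches the source the paper defers to: the paper itself does not reprove this statement but cites \cite[Prop.~2.17]{GPS2}): for a $P$-wrapping sequence $p=p_0<p_1<\cdots$ one has the chain
$H \cW_{P, \tilde{C}}(p,q) \cong \varinjlim_i H \cW_{P, \tilde{C}}(p_i,q) \cong \varinjlim_i H \cO_P(p_i,q) \cong HW_{\cS,\tilde{C}}(L_p,L_q)$,
using \pref{lem:2.16} and the defining property of wrapping sequences, and full faithfulness is then immediate once the maps are known to assemble into a functor. The genuine gap is precisely in that assembly, i.e.\ in the two points you defer to ``bookkeeping.'' A $P$-wrapping sequence is not just a cofinal chain of poset elements: it carries \emph{chosen} elements of $I_{P,\tilde{C}}$ between consecutive terms, and these choices are the transition maps of your directed systems. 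Your interleaving argument therefore needs (a) elements of $I_{P,\tilde{C}}$ (equivalently, classes lying in $\tilde{C}$) between terms of the two different sequences, and (b) that their composites reproduce, in $H^0\cF^{\text{env}}_\cS$ and not merely after localization, the originally chosen transition elements, so that each original system is a cofinal subsystem of the interleaved one. Neither is guaranteed by Definition~\pref{dfn:2.11}: $\tilde{C}$ is only required to be closed under composition, and the factorization axiom concerns factorizations inside a single wrapping category $\tilde{R}_L$ against prescribed composable tuples, not comparisons between two arbitrary continuation chains. The same objection applies to your treatment of $P\subset P'$, where ``extending a $P$-wrapping sequence to a $P'$-wrapping sequence and dominating both by a common refinement'' again presupposes continuation elements and strict compatibilities that the axioms do not provide.

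Relatedly, functoriality is not a formal consequence of the isomorphism chain. To compute $\Phi$ on a composite you must rewrite expressions of the form $\overline{\delta}^{\,-1}\circ \bar{b}$ (a class from $H\cO_P$ precomposed/postcomposed with an inverted continuation map) back into the normal form ``class from $H\cO_P$ composed with the inverse of a continuation composite,'' which is a calculus-of-fractions manipulation; at this level of generality there is no Ore-type condition, and attempting to transport relations from $H\cW_{P,\tilde{C}}$ to $H\cW_{\DF,\cS,\tilde{C}}$ presupposes the functor you are constructing, so the argument as sketched is circular. A correct proof has to either construct the functor by a universal property at the localized ($A_\infty$ or module-category) level, or carry out the colimit comparison in a way that treats all objects and compositions simultaneously; your final claim that these points are ``formal consequences of the isomorphism chain'' is where the proof is missing its main content.
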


Due to the following result,
if there are countably many isomorphism classes of Lagrangians in
$H \cW_{\DF, \cS, \tilde{C}}$,
then
one can construct a sufficiently wrapped decorated poset
$\eta \colon P \to \cF^{\text{pre}}_\cS$
with respect to
$(\cS, H \cF^{\text{env}}_\cS, \tilde{C}, \{ \tilde{R}_L \}_L)$
for which the canonical functor
\pref{eq:can}
becomes an equivalence.

\begin{lemma}[{\cite[Lem. 2.18]{GPS2}}] \label{lem:2.18}
Every countable cofinite decorated poset
$P$
for
$\cS$
admits a downward closed inclusion into a countable cofinite decorated poset
$P^\prime$
which is sufficiently wrapped with respect to
$(\cS, H \cF^{\text{env}}_\cS, \tilde{C}, \{ \tilde{R}_L \}_L)$.
\end{lemma}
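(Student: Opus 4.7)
The plan is to enlarge $P$ inductively by grafting one countable wrapping chain above each of its elements in turn. Enumerate $P = \{p_1, p_2, \ldots\}$ and construct a nested family of countable cofinite decorated posets $P = P_0 \subset P_1 \subset P_2 \subset \cdots$ via downward closed inclusions, with the property that $P_n$ equips $p_n$ with an ascending chain above it whose decorating Lagrangians form a cofinal sequence in $\tilde{R}_{L_{p_n}}$. Setting $P^\prime = \bigcup_n P_n$ will then make every $p_n$ the base of a $P^\prime$-wrapping sequence, so $P^\prime$ is sufficiently wrapped.

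The crux is the construction at a single stage $n$, assuming $P_{n-1}$ is given. By the countable cofinality hypothesis in \pref{dfn:2.11}(iii), pick a cofinal functor $A_1 \to A_2 \to \cdots$ in $\tilde{R}_{L_{p_n}}$; its image under $\tilde{R}_{L_{p_n}} \to ((H^0 \cF^{\text{env}}_\cS)_{/_{\tilde{C}} L_{p_n}})^{op}$ supplies continuation maps $\cdots \to L^{A_2} \to L^{A_1} \to L_{p_n}$ in $\tilde{C}$. I would build the new elements recursively: having produced $q^{(n)}_{j-1}$ with $L_{q^{(n)}_{j-1}} = L^{H_{j-1}}$ for some $H_{j-1}$ sitting between $A_j$ and $A_{j-1}$ in $\tilde{R}_{L_{p_n}}$, I apply the factorization axiom \pref{dfn:2.11}(iv) to the morphism $A_{j+1} \to H_{j-1}$ and the \emph{finite} family $\bfK$ of all composable tuples in the downward-closed lower set of $q^{(n)}_{j-1}$ (finite by cofiniteness of the poset so far). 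This produces $H_j$ with $A_{j+1} \to H_j \to H_{j-1}$, so that $(L^{H_j}, K_1, \ldots, K_a) \in \cL_a$ for every $(K_1, \ldots, K_a) \in \bfK$. I then add a new maximum $q^{(n)}_j$ above $q^{(n)}_{j-1}$ (comparable to nothing else), set $L_{q^{(n)}_j} := L^{H_j}$, and record the chosen continuation map $L^{H_j} \to L^{H_{j-1}}$ as part of the decoration of the pair $(q^{(n)}_j, q^{(n)}_{j-1})$. Floer data on all freshly created composable tuples ending in $q^{(n)}_j$ exist by contractibility of Floer data \pref{dfn:cS}(v), and can be chosen inductively over tuple length to be compatible with those restricted from $P_{n-1}$.

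The resulting $P_n$ is countable and cofinite (each element still has a finite lower set, since each $q^{(n)}_j$ has lower set $P_{n,j-1}^{\leq q^{(n)}_{j-1}} \cup \{q^{(n)}_j\}$), and the inclusion $P_{n-1} \subset P_n$ is downward closed because only new maxima were adjoined. The sequence $H_1, H_2, \ldots$ is cofinal in $\tilde{R}_{L_{p_n}}$: it is obtained from the cofinal sequence $A_j$ by inserting intermediate objects via factorization and then passing to the $H_j$-subsequence, and insertions/selections within the original cofinal stream remain cofinal in a filtered category. Consequently
\begin{align*}
\varinjlim_j H \cF^{\text{env}}_\cS(L_{q^{(n)}_j}, K) = \varinjlim_j H \cF^{\text{env}}_\cS(L^{H_j}, K) = HW_{\cS, \tilde{C}}(L_{p_n}, K)
\end{align*}
for every $K \in \cL$, which is exactly the wrapping-sequence condition at $p_n$. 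Passing to the union $P^\prime = \bigcup_n P_n$ preserves countability and cofiniteness, is downward closed over each $P_n$, and endows every element $p_n$ of the original enumeration with a $P^\prime$-wrapping sequence.

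The main obstacle is a bookkeeping one: the factorization axiom only accepts a \emph{finite} family $\bfK$, so I must organize the induction so that each invocation of factorization faces only the finitely many composable tuples of the current cofinite piece below the previous chain element. The diagonal scheme above ensures this, while simultaneously preserving cofinality inside each wrapping category; no other axiom plays a non-routine role.
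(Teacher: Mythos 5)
Your stage-by-stage use of the factorization axiom against the finite lower set of the previous chain element is sound bookkeeping, and it does produce, over each $p_n$, a chain whose Lagrangians $H_1, H_2, \ldots$ are cofinal in $\tilde{R}_{L_{p_n}}$. But your final inference ``so $P^\prime$ is sufficiently wrapped'' does not follow from what you verified: being sufficiently wrapped requires \emph{every} element of $P^\prime$ to belong to a $P^\prime$-wrapping sequence, including the newly adjoined vertices $q^{(n)}_j$, and you only check the original elements $p_n$. Because you made each grafted chain ``comparable to nothing else,'' the only candidate at $q^{(n)}_j$ is the tail $q^{(n)}_j < q^{(n)}_{j+1} < \cdots$, and to see that this tail is a wrapping sequence you need an extra argument: the tail of the $H_i$ is still cofinal in $\tilde{R}_{L_{p_n}}$, so its colimit computes $HW_{\cS,\tilde{C}}(L_{p_n}, K)$, and one must then identify this with $HW_{\cS,\tilde{C}}(L^{H_j}, K)$ via the continuation map $L^{H_j} \to L_{p_n}$ becoming an isomorphism in $H \cW_{\DF, \cS, \tilde{C}}$ (this is exactly the remark preceding \pref{lem:2.16}). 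This is a fixable but genuinely missing step; the paper's proof disposes of it explicitly (``Similarly, every $i \in \bZ_{\geq 0}$ belongs to some $P^\prime$-wrapping sequence'').

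There is also a structural clash with the definition as stated in this paper: a $P^\prime$-wrapping sequence is required to be a \emph{cofinal} sequence in $P^\prime$. In your $P^\prime$ the chains grafted over distinct elements of $P$ are mutually incomparable, so as soon as $P$ has two elements no increasing sequence in $P^\prime$ is cofinal at all, and the definition as written is never satisfied. This is precisely why the paper's construction adjoins a \emph{single} chain, $P^\prime = P \sqcup \bZ_{\geq 0}$, with $i$ dominating a finite exhaustion piece $S_i$, and uses a map $\bZ_{\geq 0} \to P$ with infinite fibers to interleave the wrapping of all $p \in P$ into that one chain: this yields sequences that are cofinal in $P^\prime$ and simultaneously provides wrapping sequences for the new vertices for free. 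If one adopts the original formulation of \cite[Def. 2.15]{GPS2} (no cofinality in $P$ required), your per-element grafting plus the patch above would go through, but measured against this paper's definitions and its proof, these two points are genuine gaps, not mere stylistic differences. (Minor: the continuation map $L^{H_j} \to L^{H_{j-1}}$ is part of the wrapping-sequence data in $I_{P^\prime, \tilde{C}}$, not part of the decoration of the pair, which is a Floer datum chosen by contractibility, as you do elsewhere.)
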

\begin{proof}
Let
$P^\prime = P \sqcup \bZ_{\geq 0}$
be a countable cofinite poset defined as follows.
The ordering on
$P$
is the given one
and
that on
$\bZ_{\geq 0}$
is the usual one.
Choose a sequence of finite downward closed subposets
$S_0 \subset S_1 \subset \cdots P$
whose union is
$P$.
Add an ordering
such that
each
$i \in \bZ_{\geq 0}$
is larger than all elements in
$S_i$.
This additional ordering guarantees that
$P^{\prime \leq i}$
are finite for all
$i \in \bZ_{\geq 0}$.
For each
$p \in P$
choose a cofinal sequence
$L^0_p \leadsto L^1_p \leadsto \cdots$
in
$\tilde{R}_{L_p}$.
Choose a map
$\bZ_{\geq 0} \to P$
whose fiber
$i_{p, 0} < i_{p, 1} < \cdots$
over every
$p \in P$
is infinite.
Assign to
$i_{p, k}$
the Lagrangian
$H^k_p$
arising from a choice of factorization
$L^k_p \leadsto H^k_p \leadsto L^{k+1}_p$
in
$\tilde{R}_{L_p}$.
One chooses these factorizations by induction on
$i \in \bZ_{\geq 0}$,
using datum
(iv)
in Definition
\pref{dfn:2.11}
so that
all totally ordered tuples of
$P^\prime$
correspond to composable Lagrangians.
Now,
every
$p \in S_j \subset P \subset P^\prime$
belongs to a sequence
$p < i_{p, k} < i_{p, k+1} < \cdots$
in
$P^\prime$
for some
$k \in \bZ_{\geq 0}$
with
$j < i_{p, k}$,
which is cofinal.
Together with elements in
$H^0 \cO_P(i_{p, k}, p) = H^0 \cF^{\text{env}}_\cS(H^k_p, L_p)$
and
$H^0 \cO_P(i_{p, k+1}, i_{p, k}) = H^0 \cF^{\text{env}}_\cS(H^{k+1}_p, H^k_p)$
associated with the sequence
$L_p \leadsto H^k_p \leadsto H^{k+1}_p \leadsto \cdots$
in
$\tilde{R}_{L_p}$,
it defines a $P^\prime$-wrapping sequence.
Similarly,
every
$i \in \bZ_{\geq 0}$
belongs to some
$P^\prime$-wrapping sequence.
Finally,
choose additional Floer data for all totally ordered tuples of
$P^\prime$
by induction on the skeleta of the nerve of
$P^\prime$.
\end{proof}

\begin{remark}
As mentioned in
\cite[Rem. 2.19]{GPS2},
condition
(iv)
in Definition
\pref{dfn:2.11}
often holds for countable
$\bfK$.
Then from
\pref{lem:2.18}
one can remove the assumption on
$P$
to be cofinite.
\end{remark}

\begin{remark}
When
$P$
has no duplicates,
one can make
$P^\prime$
do so.
Indeed,
take
$H^k_P$
by induction on
$i \in \bZ_{\geq 0}$
so that
it differs from all the Lagrangians associated with
$P \sqcup \{ 0, \ldots, i_{p, k} - 1 \}$.
Note that
there are uncountably many choices of each
$H^k_P$
by datum
(iv)
in Definition
\pref{dfn:2.11}.
\end{remark}

\begin{caution} \label{ctn:2}
Geometrically,
the nonnegatively wrapping category
$\tilde{R}^{(X, \frakf)}_L$
consists of uncountably many distinct objects.
However,
as written,
Definition
\pref{dfn:2.11}
does not exclude the case
where
$\tilde{R}_L$
consists of a single element
$\id_L$.
In this case,
one cannot find the Lagrangians
$H^k_p$
with the preferred property in the proof of
\pref{lem:2.18}.
If condition
(iv)
in Definition
\pref{dfn:2.11}
allowed the case
where
$\bfK$
consists of a single Lagrangian
$K \in \cL$,
then
$\id_L$
factorizes as
$L \leadsto L^w \leadsto L$
for uncountably many distinct
$L^w \in \cL$
with
$(L^w, K) \in \cL_1$.
But still Definition
\pref{dfn:2.11}
does not tell us
whether we have
$(L^w, L) \in \cL_1$.
Hence we do not know
whether the corresponding elements in
$P$
can have the desired order relation.
\end{caution}

Note that
$P_{\text{univ}}(\cS)$
is the colimit of the filtered system of its sufficiently wrapped subposets with no duplicates
and
downward closed inclusions.
As taking
colimit 
and
directed categories
intertwine,
from
\pref{lem:2.17}
it follows

\begin{lemma}[{\cite[Lem. 2.21]{GPS2}}] \label{lem:2.21}
There exists a canonical equivalence
\begin{align} \label{eq:2.21}
H \cW_{P_{\text{univ}}(\cS), \tilde{C}}
\simeq
H \cW_{\DF, \cS, \tilde{C}}.
\end{align}
\end{lemma}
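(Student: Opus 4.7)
The plan is to realize $P_{\text{univ}}(\cS)$ as a filtered colimit of sufficiently wrapped subposets, identify $H\cW_{P_{\text{univ}}(\cS), \tilde C}$ with the corresponding colimit of the $H\cW_{P, \tilde C}$, and then use \pref{lem:2.17} to glue the resulting fully faithful embeddings into $H\cW_{\DF, \cS, \tilde C}$.

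First I would observe that $P_{\text{univ}}(\cS)$ is the filtered colimit, taken along downward closed inclusions, of its sufficiently wrapped countable cofinite subposets with no duplicates. Every $p \in P_{\text{univ}}(\cS)$ lies in the downward closed subposet $P^{\leq p} \in \Pos_\cS$, to which \pref{lem:2.18} (with the no-duplicates refinement noted in the subsequent remark) associates a sufficiently wrapped extension $P' \in \Pos_\cS$; the universal property of $P_{\text{univ}}(\cS)$ realizes $P'$ as a downward closed subposet containing $p$. Filteredness follows by applying the same procedure to $P_1 \cup P_2$ for any two such subposets.

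Next I would argue that the formation of $H\cW_{P, \tilde C}$ commutes with this filtered colimit. Since hom complexes and $A_\infty$-operations in $\cO_P$ are defined entrywise, one has $\cO_{P_{\text{univ}}(\cS)} = \varinjlim_P \cO_P$ in $A_\infty Cat$ and $I_{P_{\text{univ}}(\cS), \tilde C} = \bigcup_P I_{P, \tilde C}$. To check that localization at these growing sets is compatible with the colimit, I would compute morphism complexes in $\cW_{P, \tilde C}$ via \pref{thm:5.14} applied to a $P$-wrapping sequence and use \pref{lem:2.16} to identify the resulting colimits with those already computed in $H\cF^{\text{env}}_\cS$. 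The compatible fully faithful embeddings $H\cW_{P, \tilde C} \hookrightarrow H\cW_{\DF, \cS, \tilde C}$ of \pref{lem:2.17} then assemble into a functor out of $H\cW_{P_{\text{univ}}(\cS), \tilde C}$; fully faithfulness is preserved by a filtered colimit of compatible fully faithful functors into a fixed target, and essential surjectivity holds because every $L \in \cL$ arises as the decoration of the singleton poset $\{\ast\} \in \Pos_\cS$, which embeds into a sufficiently wrapped downward closed subposet of $P_{\text{univ}}(\cS)$ by \pref{lem:2.18}.

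The hardest step will be this interchange: one must verify that the $A_\infty$-localization along the enlarging sets $I_{P, \tilde C}$ genuinely commutes with the filtered colimit over $P$ at the level of morphism complexes, not merely on isomorphism classes of objects. This is precisely where sufficient wrapping of each $P$ in the system enters, since it is the hypothesis that makes \pref{thm:5.14} applicable and forces the colimit of hom complexes to stabilize to the expected wrapped hom.
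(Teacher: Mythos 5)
Your proposal is correct and follows essentially the same route as the paper: the paper's (very terse) argument is exactly that $P_{\text{univ}}(\cS)$ is the filtered colimit of its sufficiently wrapped, no-duplicate subposets along downward closed inclusions (guaranteed pointwise by \pref{lem:2.18}), that formation of $H\cW_{P,\tilde C}$ commutes with this directed colimit, and that the compatible fully faithful functors of \pref{lem:2.17} then assemble into the stated equivalence. Your additional details on the interchange step via \pref{lem:2.16} and \pref{thm:5.14}, and the essential surjectivity via singleton posets, are exactly the content the paper leaves implicit.
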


\subsection{Functoriality}
Recall that
a morphism of abstract wrapped Floer setups
\begin{align} \label{eq:morphism}
(\cS, H \cF^{\text{env}}_\cS, \tilde{C}, \{ \tilde{R}_L \}_L)
\to
(\cS^\prime, H \cF^{\text{env}}_{\cS^\prime}, \tilde{C}^\prime, \{ \tilde{R}^\prime_L \}_L)
\end{align}
is given by a commutative diagram
\begin{align*}
\begin{gathered}
\xymatrix{
\cF^{\text{pre}}_\cS \ar@{^{(}->}[d] & H \cF^{\text{pre}}_\cS \ar@{^{(}->}[r] \ar@{^{(}->}[d] & H \cF^{\text{env}}_\cS \ar[r] \ar@{^{(}->}[d] & H \cW_{\DF, \cS, \tilde{C}} \ar[d] \\
\cF^{\text{pre}}_{\cS^\prime} & H \cF^{\text{pre}}_{\cS^\prime} \ar@{^{(}->}[r] & H \cF^{\text{env}}_{\cS^\prime} \ar[r] & H \cW_{\DF, \cS^\prime, \tilde{C}^\prime},
}
\end{gathered}
\end{align*}
where
the horizontal arrows are canonical ones
and
the vertical arrows are given as follows.
\begin{itemize}
\item
The morphism
$\cF^{\text{pre}}_\cS \hookrightarrow \cF^{\text{pre}}_{\cS^\prime}$
is an inclusion of the associated semisimplicial sets covered by identifications of the graded modules for composable pairs compatible with $A_\infty$-operations.
\item 
The morphism
$H \cF^{\text{pre}}_\cS
\hookrightarrow
H \cF^{\text{pre}}_{\cS^\prime}$
is canonically induced by
$\cF^{\text{pre}}_\cS \hookrightarrow \cF^{\text{pre}}_{\cS^\prime}$.
\item 
The morphism
$H \cF^{\text{env}}_\cS
\hookrightarrow
H \cF^{\text{env}}_{\cS^\prime}$
is induced by
$H \cF^{\text{pre}}_\cS
\hookrightarrow
H \cF^{\text{pre}}_{\cS^\prime}$
and
a choice of envelopes
$H \cF^{\text{env}}_\cS, H \cF^{\text{env}}_{\cS^\prime}$
with
$H \cF^{\text{env}}_{\cS^\prime} |_\cS = H \cF^{\text{env}}_\cS$.
\item 
The morphism
$H \cW_{\DF, \cS, \tilde{C}}
\to
H \cW_{\DF, \cS^\prime, \tilde{C}^\prime}$
is uniquely determined via
\pref{lem:2.14}
by the composition
$H \cF^{\text{env}}_\cS
\hookrightarrow
H \cF^{\text{env}}_{\cS^\prime}
\to
H \cW_{\DF, \cS^\prime, \tilde{C}^\prime}$
up to unique equivalence.
\end{itemize}
Note that
the morphism
\pref{eq:morphism}
is defined without using decorated posets.
We denote by
$\operatorname{AWFS}$
the ordinary category of abstract wrapped Floer setups.
Since the canonical naive inclusion
$\cW_{P_{\text{univ}}(\cS), \tilde{C}} \hookrightarrow \cW_{P_{\text{univ}}(\cS^\prime), \tilde{C}^\prime}$
is compatible with composition,
we obtain a functor 
\begin{align*}
\operatorname{AWFS} \to A_\infty Cat^s
\end{align*}
which induces an $\infty$-functor
\begin{align*}
N(\operatorname{AWFS}) \to \cC at^s_{A_\infty}.
\end{align*}

\begin{remark}
In
\cite[Thm. 1.26]{LST},
the above functor was elaborated to an $\infty$-functor
\begin{align*}
\cL iou^\diamond \to \cC at^s_{A_\infty}
\end{align*}
from the $\infty$-category of stabilized Liouville sectors.
\end{remark}

\section{Modified construction}
In this section,
we modify the abstract wrapped Floer setups.
This amounts to specifying a suitable set of continuation maps.
After introducing the universal envelope,
we define the wrapped Fukaya category as its localization along the continuation maps.

\subsection{Axiomatic composability}
\begin{definition}[cf.{\cite[Def. 2.9]{GPS2}}] \label{dfn:wcS}
A
\emph{weak abstract Floer setup}
$w \cS$
is an abstract Floer setup
$\cS$
with its datum
(ii)
imposed additional axiomatic composability condition.
Namely,
we replace it with the following datum.
\begin{itemize}
\item[(ii)]
Subsets
$\cL_k \subset \cL^{k+1}$
of
\emph{composable tuples}
for integers
$k \geq 1$
which are closed under
passing to subsequences
and
permuting Lagrangians:
if
$(L_0, \ldots, L_k) \in \cL_k$
then
$(L_{i_0}, \ldots, L_{i_l}) \in \cL_l$
and
$(L_{\sigma(0)}, \ldots, L_{\sigma(k)}) \in \cL_k$
for every
$0 \leq i_0 < \cdots <i_l \leq k$
and
$\sigma \in \frakS_k$.
Moreover,
every element
$(L_0, \ldots, L_k) \in \cL_k$
consists of pairwise distinct Lagrangians.
In particular,
$(L, L) \notin \cL_1$
for all
$L \in \cL$.
\end{itemize}
\end{definition}

\begin{remark} \label{rmk:1}
Different from the original definition,
here in datum
(ii)
we impose the additional conditions on the 
closedness under permutations
and
pairwise distinction.
Geometrically,
a tuple
$(L_0, \ldots, L_k)$
is composable
if it is mutually transverse
\cite[Def. 2.25]{GPS2}.
Hence our additional conditions should not affect the theory.
\end{remark}

\begin{remark} \label{rmk:2}
Perhaps one might wonder
why here we add the closedness under permutations,
as technically it will play almost no role in the sequel.
Moreover,
the original
$\cS$
seems more flexible than our
$w \cS$
in the sense that
one can simultaneously have
$(L, K) \in \cL_1, (K, L) \notin \cL_1$
for some Lagrangians
$L, K \in \cL$,
although it is a geometrically imaginary situation.
However,
that actually means
$\cS$
being more directed rather than flexible:
there are no nontrivial representatives of elements in
$I_{P, \tilde{C}}(q, p)$
for
$p, q \in P$
with
$\xi(p) = L, \xi(q) = K$
and
hence
$I_{P, \tilde{C}}(q, p) = 0$
for all
$P$,
$H^0 \cF^{\text{env}}_\cS$
and
$\tilde{C}$.
In particular,
since all morphisms between non-composable pairs (if exist) will be thrown away,
essentially the original
$\tilde{C}$
has more constraints than our
$C$
coming from the geometrically imaginary hypothesis
$(L, K) \in \cL_1, (K, L) \notin \cL_1$
on
$\cS$.
We package the information on such directions into
$C$
together with
the irrelevant choices
and
the factorization axiom
which will be removed from the original
$(\cS, H \cF^{\text{env}}_\cS, \tilde{C}, \{ \tilde{R}_L \}_L )$.
\end{remark}

We denote by
$\cF^{\text{pre}}_{w \cS}$
the $A_\infty$-pre-category specified by data
(i)-(iv)
and
(vi)
in Definition
\pref{dfn:wcS}.
From
$\cF^{\text{pre}}_{w \cS}$
we obtain the
\emph{weak Donaldson--Fukaya pre-category}
$h \cF^{\text{pre}}_{\DF, w \cS}$
of
$w \cS$
in the same way as Definition
\pref{dfn:2.10}.

\subsection{Canonical envelopes}
Fix a compatible collection
$\delta$
of Floer data.
Specifically,
choose one element
$\delta(L, K) \in D(L, K)$
for each
$(L, K) \in \cL_1$.
By datum
(v)
in
Definition
\pref{dfn:wcS}
one can find an element
$\delta(L, K, M) \in D(L, K, M)$
for each
$(L, K, M) \in \cL_2$
such that
$\mu^2_{\delta(L, K, M)}$
satisfies the $A_\infty$-relation with
$\mu^1_{\delta(L, K)}, \mu^1_{\delta(K, M)}$
and
$\mu^1_{\delta(L, M)}$.
Inductively,
choose one element
$\delta(L_0, \ldots, L_k) \in D(L_0, \ldots, L_k)$
for each
$(L_0, \ldots, L_k) \in \cL_k$
such that
$\mu^k_{\delta(L_0, \ldots, L_k)}$
satisfies the $A_\infty$-relations with respect to the restriction maps on
$D$.
The chosen elements form
$\delta$.

\begin{definition}
The
\emph{canonical envelope with respect to
$\delta$}
for
$\cF^{\text{pre}}_{w \cS}$
is an $A_\infty$-category
$\cF_{w \cS, \delta}$
whose objects are Lagrangians
$L \in \cL$
and
for every
$L, K \in \cL$
whose morphisms are
\begin{align*}
\cF_{w \cS, \delta}(L, K) =
\begin{cases}
CF(L, K)  & (L, K) \in \cL_1, \\
\bZ \langle 1_L \rangle & L = K, \\
0 & \text{else}.
\end{cases}
\end{align*}
Here,
the $A_\infty$-operations for composable tuples
$(L_0, \ldots, L_k) \in \cL_k$
are
$\mu^k_{\delta(L_0, \ldots, L_k)}$.
Requiring all
$1_L$
to be a strict unit,
they canonically extend to that for any tuples of Lagrangians in
$\cL$.
\end{definition}

By
construction
and
datum
(vi)
in Definition
\pref{dfn:wcS},
the $A_\infty$-category
$\cF_{w \cS, \delta}$
is
cofibrant strictly unital.
Note that
the underlying graded modules of morphism complexes
$\cF_{w \cS, \delta_1}(L, K), \cF_{w \cS, \delta_2}(L, K)$
coincide for any two compatible collections
$\delta_1, \delta_2$
of Floer data.
They just carry different differentials
$\mu^1_{\delta_1(L, K)}, \mu^1_{\delta_2(L, K)}$.
In fact,
we have

\begin{lemma} \label{lem:can}
For any compatible collections
$\delta_1, \delta_2$
of Floer data,
the canonical envelopes
$\cF_{w \cS, \delta_1}, \cF_{w \cS, \delta_2}$
with respect to them become equivalent in
$\cC at^s_{A_\infty}$.
\end{lemma}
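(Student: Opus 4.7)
The plan is to realize $\cF_{w \cS, \delta_1}$ and $\cF_{w \cS, \delta_2}$ as two full $A_\infty$-subcategories of a single interpolating canonical envelope, and then exhibit both inclusions as strictly unital quasi-equivalences; the resulting zigzag produces the required equivalence in $\cC at^s_{A_\infty}$.

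First I would construct an auxiliary weak abstract Floer setup $w \cS^{\sqcup 2}$ whose Lagrangian set is $\cL \times \{1, 2\}$. A tuple $((L_0, \epsilon_0), \ldots, (L_k, \epsilon_k))$ is declared composable precisely when $(L_0, \ldots, L_k) \in \cL_k$, and the Floer data sets $D^{\sqcup 2}, D'^{\sqcup 2}, D''^{\sqcup 2}, D'''^{\sqcup 2}$ are pulled back from those of $w \cS$. All axioms of Definition \pref{dfn:wcS} are inherited; in particular the pairwise-distinctness requirement within $\cL \times \{1, 2\}$ holds automatically because the labels separate Lagrangians that happen to share an underlying $L \in \cL$. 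Starting from the disjoint union $\delta_1 \sqcup \delta_2$ on the pure-label tuples, I would use datum (v) (contractibility of Floer data) to extend, by induction on the length of mixed-label tuples, to a compatible collection $\delta_{12}$ on all of $w \cS^{\sqcup 2}$; at each stage the extension is possible because the restriction map from $D^{\sqcup 2}(\ldots)$ to the limit over proper subsequences is surjective, and the previously chosen pure-label data provide a valid partial datum to extend.

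The resulting canonical envelope $\cF_{w \cS^{\sqcup 2}, \delta_{12}}$ is cofibrant strictly unital by the same construction as for any canonical envelope, and the assignments $L \mapsto (L, i)$ induce strictly unital inclusions
\[
\iota_i \colon \cF_{w \cS, \delta_i} \hookrightarrow \cF_{w \cS^{\sqcup 2}, \delta_{12}}, \quad i = 1, 2,
\]
which are fully faithful at the chain level by construction. To verify essential surjectivity at the cohomology level it suffices to show that $(L, 1)$ and $(L, 2)$ become isomorphic in $H^0 \cF_{w \cS^{\sqcup 2}, \delta_{12}}$ for every $L \in \cL$. Picking elements of $D'^{\sqcup 2}((L,1), (L,2))$ and $D'^{\sqcup 2}((L,2), (L,1))$ refining the pairs $(\delta_1(L), \delta_2(L))$ and $(\delta_2(L), \delta_1(L))$, the associated $\alpha$-maps from datum (viii) of Definition \pref{dfn:cS} are chain maps whose two-fold compositions are homotopic to the maps $\alpha_{f(\delta_i(L))}$ via the $\beta$-homotopies; datum (ix) then forces these compositions to be homotopic to the identity. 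Hence the $\alpha$-maps are mutually inverse quasi-isomorphisms and each $\iota_i$ is a strictly unital quasi-equivalence.

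The proposed zigzag
\[
\cF_{w \cS, \delta_1} \xrightarrow{\iota_1} \cF_{w \cS^{\sqcup 2}, \delta_{12}} \xleftarrow{\iota_2} \cF_{w \cS, \delta_2}
\]
of strictly unital quasi-equivalences then realizes the desired equivalence in $\cC at^s_{A_\infty}$. The main technical obstacle I anticipate is the careful inductive construction of the interpolating compatible collection $\delta_{12}$ on mixed-label tuples, threading simultaneous compatibility with the restrictions to pure-label sub-tuples at every length; the doubling of the Lagrangian set, rather than working inside $w \cS$ directly, is precisely what makes this extension possible, because otherwise tuples such as $(L, L)$ would be forbidden by datum (ii) of Definition \pref{dfn:wcS} and there would be no room to record the $\alpha$, $\beta$-data between the two choices.
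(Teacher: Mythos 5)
There is a genuine gap, and it sits at the heart of your zigzag. Your own composability rule for $w \cS^{\sqcup 2}$ declares $((L_0,\epsilon_0),\ldots,(L_k,\epsilon_k))$ composable precisely when $(L_0,\ldots,L_k) \in \cL_k$; but datum (ii) of Definition \pref{dfn:wcS} forces $(L,L) \notin \cL_1$, so the mixed pair $((L,1),(L,2))$ is \emph{not} composable in your doubled setup, and the sets $D'^{\sqcup 2}((L,1),(L,2))$ you later invoke do not exist. This is not just a bookkeeping slip that a different composability rule would fix: the abstract setup $w \cS$ contains no graded module $CF(L,L)$, no Floer-datum sets for tuples with repeated underlying Lagrangian, and no ``continuation element'' data whatsoever, so there is literally nothing to pull back to populate a morphism complex from $(L,1)$ to $(L,2)$. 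Consequently the two copies of $L$ have only the zero morphism space between them in $\cF_{w \cS^{\sqcup 2}, \delta_{12}}$, essential surjectivity of $\iota_1, \iota_2$ fails, and the zigzag does not produce an equivalence. A second, related confusion: the maps $\alpha_{\delta'(L,K)}$ of datum (viii) in Definition \pref{dfn:cS} are chain maps $(CF(L,K), \mu^1_{\delta_1(L,K)}) \to (CF(L,K), \mu^1_{\delta_2(L,K)})$, i.e.\ comparison maps between the two differentials on a fixed Hom-module; they are not closed degree-zero elements of any morphism complex between doubled objects, so they cannot witness an isomorphism $(L,1) \cong (L,2)$ in $H^0$ of your interpolating category.

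The comparison data $\alpha, \beta$ are instead exactly what lets one identify the two envelopes \emph{Hom-by-Hom at the cohomology level}: for each $(L,K) \in \cL_1$ they give mutually inverse (up to homotopy) quasi-isomorphisms between $(CF(L,K), \mu^1_{\delta_1(L,K)})$ and $(CF(L,K), \mu^1_{\delta_2(L,K)})$, compatibly with composition via the $\gamma$-homotopies, yielding a canonical equivalence $H \cF_{w \cS, \delta_1} \simeq H \cF_{w \cS, \delta_2}$ of cohomology categories (the two envelopes even share the same object set and underlying graded modules). The remaining issue is precisely the one your construction was trying to sidestep: the setup provides no higher homotopies with which to assemble these chain maps into an $A_\infty$-functor. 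This is resolved not by geometry-style doubling but by cofibrancy: since both canonical envelopes are cofibrant and strictly unital, the cohomology-level equivalence lifts to an equivalence in $\cC at^u_{A_\infty} \simeq \cC at^s_{A_\infty}$ by \pref{lem:lift} and \pref{eq:A}. If you want to keep a zigzag-flavoured argument, you would have to add to the abstract setup genuine morphism complexes and continuation elements between the two copies of each $L$ --- data the axioms deliberately do not supply.
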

\begin{proof}
As explained in Definition
\pref{dfn:2.10},
by data
(vii)-(ix)
in  Definition
\pref{dfn:wcS},
for
$(L, K) \in \cL_1$
there are
$\delta^\prime_{12}(L, K), \delta^\prime_{21}(L, K) \in D^\prime(L, K)$
which determine chain maps
$\alpha_{\delta^\prime_{12}(L, K)}, \alpha_{\delta^\prime_{21}(L, K)}$.
Passing to homotopy,
they become mutual inverses up to unique homotopy.
By construction of
$\cF_{w \cS, \delta_1}, \cF_{w \cS, \delta_2}$
this extends to all their morphism complexes.
Hence we obtain a unique equivalence
$h \cF_{\DF, w \cS, \delta_1} \simeq h \cF_{\DF, w \cS, \delta_2}$
of the canonical envelopes with respect to
$\delta_1, \delta_2$
for
$h \cF^{\text{pre}}_{\DF, w \cS}$,
which in turn induces a unique equivalence
$H \cF_{w \cS, \delta_1} \simeq H \cF_{w \cS, \delta_2}$.
Since
$\cF_{w \cS, \delta_1}, \cF_{w \cS, \delta_2}$
are
cofibrant (strictly) unital,
by
\pref{lem:lift}
it lifts to an equivalence
$\cF_{w \cS, \delta_1} \simeq \cF_{w \cS, \delta_2}$
in
$\cC at^s_{A_\infty}$
via
\pref{eq:A}.
\end{proof}

From the above proof one sees that
the canonical envelope
$H \cF_{w \cS}$
for
$H \cF^{\text{pre}}_{w \cS}$
is well defined up to unique equivalence.

\begin{remark}
In view of
Remark
\pref{rmk:1},
\pref{rmk:2}
we may and will assume
$H \cF_{w \cS} \subset H \cF^{\text{env}}_\cS$
throughout the paper.
Everything will work without this assumption
when one replaces
$\tilde{C} |_{H^0 \cF_{w \cS}}$
with
$\tilde{C} |_{H^0 \cF_{w \cS} \cap H^0 \cF^{\text{env}}_\cS}$.
\end{remark}

\subsection{Weak abstract wrapped Floer setups}
Now,
we are ready to remove
the irrelevant choices of
envelope
and
wrapping categories,
as well as
the factorization axiom
from the original abstract wrapped Floer setups.

\begin{definition}[cf. {\cite[Def. 2.11]{GPS2}}] \label{dfn:wrapped}
A
\emph{weak abstract wrapped Floer setup}
$(w \cS, C)$
is a weak abstract Floer setup
$w \cS$
together with a set
$C \subset H^0 \cF_{w \cS}$
of
\emph{continuation maps}
satisfying the following conditions.
\begin{itemize}
\item[(i)]
All units in
$H \cF_{w \cS}$
are contained in
$C$.
\item[(ii)]
The set
$C$
is closed under composition.
\item[(iii)]
Each diagram
$K \xrightarrow{g} L \xleftarrow{c_{L^w, L}} L^w$
in
$H \cF_{w \cS}$
with
$c_{L^w, L} \in C$
extends to a commutative diagram
\begin{align*}
\begin{gathered}
\xymatrix{
K^w  \ar^{g^w}[r] \ar_{c_{K^w, K}}[d] & L^w \ar^{c_{L^w, L}}[d] \\
K \ar^{g}[r] & L
}
\end{gathered}
\end{align*}
in
$H \cF_{w \cS}$
with
$c_{K^w, K} \in C$.  
\item[(iv)]
If
$g_1, g_2 \colon L \to K$
are morphisms in
$H \cF_{w \cS}$
with
$c_{K, K^\prime} \circ g_1 = c_{K, K^\prime} \circ g_2$
for some 
$c_{K, K^\prime} \colon K \to K^\prime$
in
$C$,
then there exists a map
$c_{L^w, L} \colon L^w \to L$
in
$C$
satisfying
$g_1 \circ c_{L^w, L} = g_2 \circ c_{L^w, L}$.
\item[(v)]
The
\emph{continuation slice category}
$(H^0 \cF_{w \cS})_{/_C L}$,
the full subcategory of the slice category
$(H^0 \cF_{w \cS})_{/ L}$
spanned by continuation maps,
contains at least one object
$L^w \to L$
with
$L^w \neq L$.
\item[(vi)]
The opposite
$((H^0 \cF_{w \cS})_{/_C L})^{op}$
has cofinal countability.
\end{itemize}
\end{definition}

We impose conditions
(i)-(iv)
to make
$C$
a right multiplicative system of
$H \cF_{w \cS}$.
Here,
an analogous result of
\pref{lem:2.14}
immediately follows from
\cite[Prop. 1.3(iv)]{GZ}.
Note that
by Caution
\pref{ctn:2}
a counterpart of condition
(v)
must have been included in Definition
\pref{dfn:2.11}.
The right multiplicative system
$C$
together with condition
(vi)
implies the existence of a
\emph{wrapping category}
$R_L$
for all
$L \in \cL$
in the sense of
\cite[Def. 2.11(iii)]{GPS2},
which exhibits the right locality property.

\begin{lemma}[cf. {\cite[Rmk. 2.12]{GPS2}}] \label{lem:wrapping}
Given any set
$C \subset H^0 \cF_{w \cS}$
of continuation maps satisfying conditions
(i)-(iv),
for all
$L \in \cL$
there exists a canonical filtered category
$R_L$
together with a canonical equivalence
$R_L \to ((H^0 \cF_{w \cS})_{/_C L})^{op}$
which exhibits the right locality property.
If in addition
$C$
satisfies condition
(vi),
then
$R_L$
has countable cofinality.
\end{lemma}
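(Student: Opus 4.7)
The plan is to take $R_L$ to be the opposite slice category $((H^0 \cF_{w \cS})_{/_C L})^{op}$ itself, so that the canonical equivalence is simply the identity; any skeleton would serve equally well. The task then splits into three parts: showing that $R_L$ is filtered, verifying the right locality property, and deducing countable cofinality from condition (vi). The overall strategy is to interpret conditions (i)--(iv) as saying that $C$ forms a right multiplicative (Ore) system in $H \cF_{w \cS}$ and then apply the standard calculus-of-fractions argument.

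First, I would verify that $(H^0 \cF_{w \cS})_{/_C L}$ is cofiltered, equivalently that $R_L$ is filtered. Non-emptiness is immediate from condition (i), since $\id_L \in C$ provides an object. Given two objects $c_i \colon L^{w_i} \to L$ in $C$ for $i = 1, 2$, condition (iii) applied to the diagram $L^{w_1} \xrightarrow{c_1} L \xleftarrow{c_2} L^{w_2}$ produces a commutative square with a continuation map $c \colon L^w \to L^{w_1}$ in $C$; the composite $c_1 \circ c$ lies in $C$ by (ii), giving a common predecessor of $c_1$ and $c_2$ in the slice. For parallel morphisms $g_1, g_2 \colon (L^{w_1}, c_1) \to (L^{w_2}, c_2)$, the relation $c_2 \circ g_1 = c_1 = c_2 \circ g_2$ together with condition (iv) produces a continuation map $L^w \to L^{w_1}$ in $C$ equalizing $g_1$ and $g_2$ after precomposition, completing the cofilteredness check.

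Next, I would establish the right locality property. Let $f \colon K \to K^\prime$ be any continuation map; the claim is that post-composition with $f$ induces an isomorphism
\begin{align*}
\varinjlim_{L^w \in R_L} H \cF_{w \cS}(L^w, K) \xrightarrow{\sim} \varinjlim_{L^w \in R_L} H \cF_{w \cS}(L^w, K^\prime).
\end{align*}
For surjectivity, an element of the target is represented by some $\alpha \colon L^w \to K^\prime$, and condition (iii) applied to $L^w \xrightarrow{\alpha} K^\prime \xleftarrow{f} K$ supplies a continuation map $c \colon (L^w)^\prime \to L^w$ in $C$ together with a morphism $\beta \colon (L^w)^\prime \to K$ satisfying $f \circ \beta = \alpha \circ c$; the composite $(L^w)^\prime \to L^w \to L$ remains in $C$ by (ii), so $[\beta]$ maps to $[\alpha]$ in the colimit. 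For injectivity, if $[f \circ \beta_1] = [f \circ \beta_2]$ in the target then some continuation map $t$ in $C$ yields $f \circ \beta_1 \circ t = f \circ \beta_2 \circ t$, and condition (iv) produces a further continuation map $c^\prime$ in $C$ with $\beta_1 \circ t \circ c^\prime = \beta_2 \circ t \circ c^\prime$, so $[\beta_1] = [\beta_2]$ in the source.

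Finally, condition (vi) asserts exactly that $((H^0 \cF_{w \cS})_{/_C L})^{op} = R_L$ admits a countable cofinal subcategory, giving countable cofinality at once. The main bookkeeping obstacle is keeping track of the direction of arrows and of which maps are required to lie in $C$ when passing to the opposite category; beyond this, no nontrivial categorical machinery is needed, and the proof reduces to a careful unwinding of the Ore-type axioms.
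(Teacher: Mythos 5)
Your proof is correct and follows essentially the same argument as the paper: conditions (i)--(iv) are used exactly as a right Ore system (nonemptiness from (i), directedness from (ii) and (iii), equalization and injectivity of the locality map from (iv), surjectivity from (iii) plus (ii)), and (vi) gives countable cofinality. The only difference is cosmetic: the paper builds $R_L$ as the category whose morphisms are the continuation maps over $L$ and then compares it with $((H^0 \cF_{w \cS})_{/_C L})^{op}$, whereas you take the opposite continuation slice itself (so the comparison is the identity); the verifications are the same either way, and your harmless use of an arbitrary slice morphism $t$ in the injectivity step is fine since condition (iv) does not require it to lie in $C$.
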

\begin{proof}
To each continuation map
$c_{L^w, L} \colon L^w \to L$
we associate a diagram 
$L \leadsto L^w$.
For every such diagrams
$L \leadsto L^w, L \leadsto L^{w^\prime}$
associated with
$c_{L^w, L}, c_{L^{w^\prime}, L} \in C$,
we define a morphism
$(L \leadsto L^w) \to  (L \leadsto L^{w^\prime})$
to be a diagram
$L^w \leadsto L^{w^\prime}$
associated with some
$c_{L^{w^\prime}, L^w} \in C$
satisfying
$c_{L^w, L} \circ c_{L^{w^\prime}, L^w} = c_{L^{w^\prime}, L}$.
Let
$R_L$
be an ordinary category formed by these
diagrams
and
morphisms.
The facts that
every object has unit
and
composition is well defined
respectively follow from conditions
(i)
and
(ii).
Since
$(H^0 \cF_{w \cS})_{/_C L}$
contains
$\id_L$,
the category
$R_L$
is nonempty.
Then by condition
(iii), (iv)
it is filtered.
For any map
$c_{K, K^\prime} \colon K \to K^\prime$
in
$C$,
the induced map
\begin{align*}
HW_{w \cS, C}(L, K)
\coloneqq
\varinjlim_{L^w \in R_L} H \cF_{w \cS}(L^w, K)
\to
HW_{w \cS, C}(L, K^\prime)
\coloneqq
\varinjlim_{L^w \in R_L} H \cF_{w \cS}(L^w, K^\prime)
\end{align*}
is surjective by condition
(iii),
while condition
(iv)
implies its injectivity.
By definition there is a canonical equivalence
$R_L \to ((H^0 \cF_{w \cS})_{/_C L})^{op}$.
Then condition
(vi)
just rephrases that
$R_L$
has countable cofinality.
\end{proof}

\begin{remark} \label{rmk:factorization}
In view of
\pref{lem:wrapping},
the factorization axiom
\cite[Def. 2.11(iv)]{GPS2}
corresponds to the following condition.
\begin{itemize}
\item[(vii)]
Every morphism
$L^{w^\prime} \to L^w$
in
$(H^0 \cF_{w \cS})_{/_C L}$
must satisfy the
\emph{factorization property}:
for all finite sets
$\bfK = \{ (K^j_1, \ldots, K^j_{a_j}) \}_j$
of composable tuples,
there exists a factorization
$L^{w^\prime} \to H \to L^w$
in
$(H^0 \cF_{w \cS})_{/_C L}$
such that
each
$(H, K^j_1, \ldots, K^j_{a_j})$
belongs to
$\cL_{a_j}$.
Moreover,
we require that
there always are uncountably many possibilities for such an
$H$.
\end{itemize}
\end{remark}

\begin{remark}
Here,
from the original definition
we remove the choices of
envelope
$H \cF^{\text{env}}_\cS$
for
$H \cF^{\text{pre}}_\cS$
and
wrapping categories
$R_L$
for all
$L \in \cL$,
as well as the factorization axiom
\cite[Def. 2.11(iv)]{GPS2}.
Note that
what we add to
$H \cF^{\text{pre}}_{w \cS}$
to obtain
$H \cF_{w \cS}$
is minimal requirement to make
$H \cF^{\text{pre}}_{w \cS}$
an ordinary $\bZ$-linear category,
rather than a choice of data.
As mentioned above,
a counterpart of condition
(v)
must have been included in Definition
\pref{dfn:2.11}.
Thus essentially we do not add anything to the original abstract wrapped Floer setups.
In particular,
for the modified axiomatic construction below,
usual geometric data will always give sufficient input to define the wrapped Fukaya categories.
\end{remark}

For
$(w \cS, C)$
we define the
\emph{weak wrapped Donaldson--Fukaya category}
$H \cW_{\DF, w \cS, C}$
in the same way as Definition
\pref{dfn:2.13}.
By definition of
$H \cF_{w \cS}$
the set
$C$
uniquely determines a collection of sets
$C_\delta \subset H^0 \cF_{w \cS, \delta}$
of continuation maps satisfying conditions
(i) - (vi)
for
$H^0 \cF_{w \cS, \delta}$.
We denote by
$\cW_{w \cS, C_\delta}$
the localization
$\cF_{w \cS, \delta}[C^{-1}_\delta]$.
Each
$\cW_{w \cS, C_\delta}$
gives an $A_\infty$-categorical lift of
$H \cW_{\DF, w \cS, C}$.

\begin{lemma} \label{lem:cohomology}
Given any compatible collection
$\delta$
of Floer data,
there is a canonical equivalence
\begin{align*}
H \cW_{w \cS, C_\delta} \simeq H \cW_{\DF, w \cS, C}.
\end{align*}
\end{lemma}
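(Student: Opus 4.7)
The plan is to identify both cohomology categories with the same filtered colimit model $\varinjlim_{L^w \in R_L} H\cF_{w \cS, \delta}(L^w, M)$ of morphism spaces, where $R_L$ is the wrapping category constructed in Lemma \pref{lem:wrapping}.

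For the right-hand side, I would use Lemma \pref{lem:wrapping} to collapse the double limit in Definition \pref{dfn:2.13}: the right locality property forces
\begin{equation*}
H\cW_{\DF, w \cS, C}(L, M) = \varprojlim_{M^w \in R_M} \varinjlim_{L^w \in R_L} H\cF_{w \cS}(L^w, M^w) = \varinjlim_{L^w \in R_L} H\cF_{w \cS}(L^w, M) = HW_{w \cS, C}(L, M).
\end{equation*}
Lemma \pref{lem:can} then identifies $H\cF_{w \cS}$ canonically with $H\cF_{w \cS, \delta}$ for the fixed $\delta$.

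For the left-hand side, I would apply Theorem \pref{thm:5.14} to the cofibrant strictly unital $A_\infty$-category $\cF_{w \cS, \delta}$ together with $C_\delta$. Pick any cofinal sequence $\cdots \to L^{w_1} \to L^{w_0} = L$ in $R_L$; this exists by condition (vi) of Definition \pref{dfn:wrapped}. The hypothesis of Theorem \pref{thm:5.14} demands, for each representative $c \colon K \to K'$ of a class in $C_\delta$, that
\begin{equation*}
\varinjlim_i \cF_{w \cS, \delta}(L^{w_i}, K) \to \varinjlim_i \cF_{w \cS, \delta}(L^{w_i}, K')
\end{equation*}
be a quasi-isomorphism. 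Since cohomology commutes with filtered colimits, this reduces to bijectivity of the induced map on $H\cF_{w \cS, \delta}$. Conditions (i) and (ii) of Definition \pref{dfn:wrapped} ensure that $C_\delta$ is closed under identities and composition, while conditions (iii) and (iv) supply the two halves of the Gabriel--Zisman calculus of right fractions: surjectivity follows from (iii) applied to cospans $L^{w_i} \to K' \xleftarrow{c} K$ combined with cofinality of $\{L^{w_i}\}$ in $R_L$, and injectivity follows from (iv) together with filteredness of $R_L$. Theorem \pref{thm:5.14} then yields a quasi-isomorphism
\begin{equation*}
\varinjlim_i \hom_{\cF_{w \cS, \delta}}(L^{w_i}, M) \xrightarrow{\sim} \varinjlim_i \hom_{\cW_{w \cS, C_\delta}}(L^{w_i}, M) \simeq \hom_{\cW_{w \cS, C_\delta}}(L, M),
\end{equation*}
where the final quasi-isomorphism uses that each $L^{w_i} \to L$ becomes an equivalence in the localization. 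Passing to $H^0$ produces the desired bijection on morphism sets.

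It remains to verify canonicity and compatibility with composition. Independence of the cofinal sequence is automatic, since any two cofinal sequences in the filtered category $R_L$ admit a common refinement, inducing canonical isomorphisms of the associated colimits; functoriality in $M$ is direct. The composition on $H\cW_{\DF, w \cS, C}$ is by construction the one induced on $\varinjlim$ by the right-fraction composition on $H^0 \cF_{w \cS, \delta}$, and the composition on $H\cW_{w \cS, C_\delta}$ inherits the same right-fraction composition through the quasi-isomorphism above. I expect the main obstacle to be spelling out this match diagrammatically, which should reduce to a chase using the representing morphisms supplied by conditions (iii) and (iv).
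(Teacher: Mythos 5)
Your proposal follows essentially the same route as the paper's proof: pick a cofinal sequence in $R_L$ (Lemma \pref{lem:wrapping} together with condition (vi) of Definition \pref{dfn:wrapped}), feed the right locality property into the hypothesis of Theorem \pref{thm:5.14}, and identify the two sides of the resulting quasi-isomorphism with $H \cW_{\DF, w \cS, C}(L, M)$ and $H \cW_{w \cS, C_\delta}(L, M)$, using that continuation maps become isomorphisms in the localization. The only cosmetic deviations are that you re-derive right locality from conditions (iii)--(iv) rather than quoting it from Lemma \pref{lem:wrapping}, and the final identification should be read off on the full graded cohomology rather than only on $H^0$.
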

\begin{proof}
Take any two Lagrangians
$L_\delta, K_\delta \in \cL$.
By
\pref{lem:wrapping}
and
condition
(vi)
in Definition
\pref{dfn:wrapped},
there exists a cofinal functor
$\bZ_{\geq 0} \to R_{L_\delta}$.
Let
$L_\delta
=
L^{w_0}_\delta
\leadsto
L^{w_1}_\delta
\leadsto
\cdots$
be the image of the functor,
which corresponds to a sequence
$\cdots
\xrightarrow{c_2}
L^{w_1}_\delta
\xrightarrow{c_1}
L^{w_0}_\delta$
in
$H^0 \cF_{w \cS, \delta}$.
Since
$R_{L_\delta} \simeq ((H^0 \cF_{w \cS, \delta})_{/_{C_\delta} L_\delta})^{op}$
exhibits the right locality property,
by
\pref{thm:5.14}
the induced morphism
\begin{align*}
\varinjlim_i \hom_{\cF_{w \cS, \delta}}(L^{w_i}_\delta, K_\delta)
\to
\varinjlim_i \hom_{\cW_{w \cS, C_\delta}}(L^{w_i}_\delta, K_\delta)
\end{align*}
is a quasi-isomorphism.
Passing to cohomology,
the left hand side becomes
\begin{align*}
H \cW_{\DF, w \cS, C}(L_\delta, K_\delta)
=
\varinjlim_i H \cF_{w \cS, \delta}(L^{w_i}_\delta, K_\delta)
\end{align*}
since a filtered colimit of cohomology is canonically isomorphic to the cohomology of a filtered homotopy colimit.
Note that
the sequence
$L^{w_0}_\delta
\leadsto
L^{w_1}_\delta
\leadsto
\cdots$
is cofinal
and
$\cF_{w \cS, \delta}$
is cofibrant.
Passing to cohomology,
the right hand side becomes
\begin{align*}
H \cW_{w \cS, C_\delta}(L_\delta, K_\delta)
=
\varinjlim_i H \cW_{w \cS, C_\delta}(L^{w_i}_\delta, K_\delta)
\end{align*}
since all
$c_i$
turn to isomorphisms in
$H \cW_{w \cS, C_\delta}$.
Clearly,
the induced isomorphism
\begin{align} \label{eq:isomorphism}
H \cW_{w \cS, C_\delta}(L_\delta, K_\delta)
\xrightarrow{\sim}
H \cW_{\DF, w \cS, C}(L_\delta, K_\delta)
\end{align}
is functorial.
As
$H \cW_{w \cS, \delta}$
and
$H \cW_{\DF, w \cS, C}$
share the same object set
$\cL$,
the assignments
$L_\delta \mapsto L_\delta$
and
\pref{eq:isomorphism}
define the desired equivalence.
\end{proof}

\subsection{Localization along continuation maps}
\begin{definition}
A
\emph{decorated semisimplicial set}
$\xi \colon E \to \cF^{\text{pre}}_{w \cS}$
for a weak abstract Floer setup
$w \cS$
is a semisimplicial set
$E$
together with a map
$\xi$
from the nerve of
$E$
to the semisimplicial set associated with
$\cF^{\text{pre}}_{w \cS}$.
The map
$\xi$
sends each simplex
$p_k > \cdots > p_0$
in
$E$
to a pair of a composable tuple
$(L_{p_k}, \ldots, L_{p_0}) \in \cL_k$
and
an element
$\xi(p_k, \ldots, p_0) \in D(L_{p_k}, \ldots, L_{p_0})$.
An
\emph{inclusion}
$E \subset E^\prime$
of decorated semisimplicial sets is an inclusion of underlying semisimplicial sets
such that
the decoration
$\xi$
on
$E$
is the restriction of that
$\xi^\prime$
on
$E^\prime$.
\end{definition}

Let
$\xi \colon E \to \cF^{\text{pre}}_{w \cS}$
be any decorated semisimplicial set for
$w \cS$.
We denote by
$\cF_E$
the cofibrant strictly unital $A_\infty$-category 
whose objects are elements
$p \in E$
and
for every
$p, q \in E$
whose morphisms are
\begin{align*}
\cF_E(p, q) =
\begin{cases}
CF(L_p, L_q)  & (L_p, L_q) \in \cL_1, \\
\bZ \langle 1_p \rangle & p = q, \\
0 & \text{else}
\end{cases}
\end{align*}
where
$L_p = \xi(p), L_q = \xi(q)$.
Here,
the $A_\infty$-operations for $k$-simplices
$p_k > \cdots > p_0$
are
$\mu^k_{\xi(p_k, \ldots, p_0)}$
and
we require all
$1_p$
to be a strict unit.
Let
$C_E$
be the set of morphisms in
$H^0 \cF_E$
which is canonically determined by
$C \subset H^0 \cF_{w \cS}$.
We denote by
$\cW_{E, C}$
the localization
$\cF_E[C^{-1}_E]$.

\begin{definition}
Let
$\delta$
be a compatible collection of Floer data for
$w \cS$.
Then the
\emph{canonical decorated semisimplicial set with respect to $\delta$}
for
$w \cS$
is the one
$\xi_\delta \colon E_\delta \to \cF^{\text{pre}}_{w \cS}$
induced by
$\cF_{w \cS, \delta}$.
\end{definition}

Here,
we use the same symbol
$\cF^{\text{pre}}_{w \cS}$
to denote the associated semisimplicial set,
while we introduce a new symbol
$E_\delta$
to denote the one associated with
$\cF_{w \cS, \delta}$.

\begin{example} \label{eg:F}
The $A_\infty$-category
$\cF_{E_\delta}$
can be canonically identified with the canonical envelope
$\cF_{w \cS, \delta}$
with respect to
$\delta$
for
$\cF^{\text{pre}}_{w \cS}$.
\end{example}

\begin{definition}
An
\emph{entanglement}
$\xi_n \colon E_n \to \cF^{\text{pre}}_{w \cS}$
for
$w \cS$
is a decorated semisimplicial set inductively constructed as follows.
\begin{itemize}
\item[(0)]
Consider the disjoint union
$\bigsqcup_{\delta} E_\delta$.
For each pair
$(p_\delta, q_{\delta^\prime})$
with
$\delta \neq \delta^\prime, p_\delta \in E_\delta, q_{\delta^\prime} \in E_{\delta^\prime}$
and
$(\xi_\delta(p_\delta), \xi_{\delta^\prime}(q_{\delta^\prime})) \in \cL_1$,
add two edges connecting
$p_\delta, q_{\delta^\prime}$
which are of mutually inverse directions. 
Add higher simplices spanned by the
original simplices
and
additional edges.
Using datum
(v)
in Definition
\pref{dfn:wcS},
choose inductively compatible Floer data for the additional simplices.
We denote by
$\xi_0 \colon E_0 \to \cF^{\text{pre}}_{w \cS}$
the resulting decorated semisimplicial set
and
call it a
\emph{$0$-entanglement}.
\item[(1)]
Consider the disjoint union
$E_0 \sqcup E^\prime_0$
of two distinct $0$-entanglements.
For each pair
$(p_{\delta}, q^\prime_{\delta^\prime})$
with
$p_\delta \in E_\delta \subset E_0, q^\prime_{\delta^\prime} \in E_{\delta^\prime} \subset E^\prime_0$
and
$(\xi_\delta(p_\delta), \xi^\prime_{\delta^\prime}(q^\prime_{\delta^\prime})) \in \cL_1$,
add two edges connecting
$p_\delta, q^\prime_{\delta^\prime}$
which are of mutually inverse directions.
Here,
we allow the case
$\delta = \delta^\prime$.
Add higher simplices spanned by the
original simplices
and
additional edges.
Using datum
(v)
in Definition
\pref{dfn:wcS},
choose inductively compatible Floer data for the additional simplices.
We denote by
$\xi_1 \colon E_1 \to \cF^{\text{pre}}_{w \cS}$
the resulting decorated semisimplicial set
and
call it a
\emph{$1$-entanglement}.
\item[(n)]
Consider the disjoint union
$\bigsqcup^n_{i=1} E^i_0$
of 
$n$
distinct $0$-entanglements.
For each pair
$(p^i_{\delta}, q^j_{\delta^\prime})$
with
$i \neq j, p^i_\delta \in E^i_\delta \subset E^i_0, q^j_{\delta^\prime} \in E^j_{\delta^\prime} \subset E^j_0$
and
$(\xi^i_\delta(p^i_\delta), \xi^j_{\delta^\prime}(q^j_{\delta^\prime})) \in \cL_1$,
add two edges connecting
$p^i_\delta, q^j_{\delta^\prime}$
which are of mutually inverse directions.
Here,
we allow the case
$\delta = \delta^\prime$.
Add higher simplices spanned by the
original simplices
and
additional edges.
Using datum
(v)
in Definition
\pref{dfn:wcS},
choose inductively compatible Floer data for the additional simplices.
We denote by
$\xi_n \colon E_n \to \cF^{\text{pre}}_{w \cS}$
the resulting decorated semisimplicial set
and
call it an
\emph{$n$-entanglement}.
\end{itemize}
\end{definition}

\begin{definition}
A
\emph{morphism}
$E_m \to E_n$
of entanglements for
$w \cS$
is the inclusion
$E_m \subset E_n$
induced by that
$\bigsqcup^m_{i = 1} E^i_0 \subset \bigsqcup^n_{j = 1} E^{\prime j}_0$
of underlying disjoint unions of $0$-entanglements,
which preserves
$\sqcup_\delta E^1_\delta, \ldots, \sqcup_\delta E^m_\delta$
for all compatible collection
$\delta$
of Floer data in each
$E^1_0, \ldots, E^m_0$.
\end{definition}

By definition morphisms of entanglements can be naively composed to yield another.
Hence
the entanglements for
$w \cS$
and
their morphisms
form a category
$\bfE_{w \cS}$.
Note that
when
$m = n$
the only possible morphism
$E_m \to E_n$
is the identity.
Moreover,
given any two entanglements
$E_m, E_n$,
there is at most one morphism
$E_m \to E_n$.

\begin{remark}
The category
$\bfE_{w \cS}$
is not filtered.
For instance,
take two distinct $2$-entanglements
$E_2, E^\prime_2$
which share a common $1$-subentanglement
$E_1 = E^\prime_1$.
Then by definition there is no entanglement
which contains both
$E_2$
and
$E^\prime_2$.
\end{remark}

\begin{definition}
The
\emph{universal envelope}
$\cF^{\text{univ}}_{w \cS}$
for
$\cF^{\text{pre}}_{w \cS}$
is the colimit for the $\infty$-functor
\begin{align*}
\epsilon_\cF \colon N(\bfE_{w \cS}) \to \cC at^s_{A_\infty}, \
E \mapsto \cF_E.
\end{align*}
\end{definition}

Recall that
the colimit for
$\epsilon_\cF$
is an initial object of the slice $\infty$-category
$(\cC at^s_{A_\infty})_{\epsilon_\cF /}$.
It is well defined up to unique equivalence,
since
$\cC at_{dg} \simeq \cC at^s_{A_\infty}$
admits all colimits.

\begin{remark}
Note that
initial object is unique only up to unique equivalence.
If we choose an initial object
$\cF^{\text{univ}}_{w \cS, \circ}$
of
$(\cC at^s_{A_\infty})_{\epsilon_\cF /}$,
then we obtain the decorated semisimplicial set
\begin{align*}
\xi^\circ_{\text{univ}}(w \cS)
\colon
E^\circ_{\text{univ}}(w \cS)
\to
\cF^{\text{pre}}_{w \cS}
\end{align*}
by forgetting graded modules
$\cF^{\text{univ}}_{w \cS, \circ}(p, q)$.
Then we have
$\cF^{\text{univ}}_{w \cS, \circ} = \cF_{E^\circ_{\text{univ}}(w \cS)}$.
\end{remark}

\begin{definition}
The
\emph{wrapped Fukaya category}
$\cW_{w \cS, C}$
of a weak abstract wrapped Floer setup
$(w \cS, C)$
is the colimit for the $\infty$-functor
\begin{align*}
\epsilon_\cW \colon N(\bfE_{w \cS}) \to \cC at^s_{A_\infty}, \
E \mapsto \cW_{E, C}.
\end{align*}
\end{definition}

By definition of
$\cF^{\text{univ}}_{w \cS}$
the subsets
$C_E \subset H^0 \cF_E$
for all
$E \in \bfE_{w \cS}$
uniquely determines a subset
$C_{\text{univ}} \subset H^0 \cF^{\text{univ}}_{w \cS}$.
The compositions
$\cF_E \to \cW_{E, C} \to \cW_{w \cS, C}$
induce a morphism
$\cF^{\text{univ}}_{w \cS} \to \cW_{w \cS, C}$
from the colimit for
$\epsilon_\cF$,
which sends all representatives of any element in
$C_{\text{univ}}$
to isomorphisms in
$\cW_{w \cS, C}$.
Via universality of localization,
we obtain a morphism
$\cF^{\text{univ}}_{w \cS}[C^{-1}_{\text{univ}}] \to \cW_{w \cS, C}$
which is unique up to homotopy.
On the other hand,
the morphisms
$\cF_E[C^{-1}_E] \to \cF^{\text{univ}}_{w \cS}[C^{-1}_{\text{univ}}]$
induce a morphism
$\cW_{w \cS, C} \to \cF^{\text{univ}}_{w \cS}[C^{-1}_{\text{univ}}]$
from the colimit for
$\epsilon_\cW$.
Hence the wrapped Fukaya category
$\cW_{w \cS, C}$
is the localization of the universal envelope
$\cF^{\text{univ}}_{w \cS}$
along continuation maps.

\begin{lemma} \label{lem:canonical} 
There exists a canonical equivalence in
$\cC at^s_{A_\infty}$
\begin{align} \label{eq:canonical} \cF^{\text{univ}}_{w \cS}[C^{-1}_{\text{univ}}
]
=
\cW_{w \cS, C}.
\end{align}
\end{lemma}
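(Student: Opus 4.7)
The plan is to establish the equivalence \eqref{eq:canonical} by invoking Yoneda in $\cC at^s_{A_\infty}$: I would compute mapping spaces from both sides into an arbitrary test object $\cD$ and show they agree canonically, then observe that the two morphisms constructed in the paragraph preceding the lemma realize this agreement and are therefore mutually inverse.

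First I would unwind the right-hand side. By definition of $\cW_{w\cS, C}$ as the colimit for $\epsilon_\cW$, for any $\cD \in \cC at^s_{A_\infty}$ one has
\[\hom_{\cC at^s_{A_\infty}}(\cW_{w\cS, C}, \cD) \simeq \varprojlim_{E \in \bfE_{w\cS}} \hom_{\cC at^s_{A_\infty}}(\cW_{E, C}, \cD).\]
Each $\cF_E$ is cofibrant by construction, so \pref{thm:5.9}, transported to $\cC at^s_{A_\infty}$ via the canonical equivalence \eqref{eq:A}, identifies the right-hand term with the full subspace of $\hom_{\cC at^s_{A_\infty}}(\cF_E, \cD)$ consisting of functors which send all representatives of elements of $C_E$ to equivalences.

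Next I would perform the analogous unwinding on the left-hand side. The colimit description of $\cF^{\text{univ}}_{w\cS}$ for $\epsilon_\cF$ yields
\[\hom_{\cC at^s_{A_\infty}}(\cF^{\text{univ}}_{w\cS}, \cD) \simeq \varprojlim_{E \in \bfE_{w\cS}} \hom_{\cC at^s_{A_\infty}}(\cF_E, \cD),\]
and applying \pref{thm:5.9} once more, after passing to a cofibrant replacement if necessary, identifies $\hom_{\cC at^s_{A_\infty}}(\cF^{\text{univ}}_{w\cS}[C^{-1}_{\text{univ}}], \cD)$ with the subspace of $\hom_{\cC at^s_{A_\infty}}(\cF^{\text{univ}}_{w\cS}, \cD)$ consisting of functors which send all representatives of $C_{\text{univ}}$ to equivalences. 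Since $C_{\text{univ}}$ is precisely the set uniquely determined by the compatible family $\{C_E\}$ through the canonical cone $\cF_E \to \cF^{\text{univ}}_{w\cS}$, this subspace matches termwise with the one from the first step. The two candidate morphisms from the discussion preceding the lemma manifestly induce this common identification on mapping spaces, so by Yoneda they are mutually inverse equivalences in $\cC at^s_{A_\infty}$, giving the canonical equivalence \eqref{eq:canonical}.

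I expect the main obstacle to be the compatibility check in the second step: verifying that an $A_\infty$-functor out of $\cF^{\text{univ}}_{w\cS}$ sends every representative of every element of $C_{\text{univ}}$ to an equivalence if and only if each restriction to $\cF_E$ sends representatives of $C_E$ to equivalences. The ``if'' direction is clear from the construction of $C_{\text{univ}}$, while the ``only if'' direction requires that $C_{\text{univ}}$ be generated, in the appropriate sense, by the union of the images of the $C_E$; since $\bfE_{w\cS}$ is not filtered this is not wholly automatic, but the very definition of $C_{\text{univ}}$ as the set uniquely determined by $\{C_E\}$ is tailored precisely to make this pass.
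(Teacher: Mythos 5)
Your proposal is correct, and it runs on exactly the ingredients the paper itself uses --- the universal property of the colimits over $\bfE_{w \cS}$ for $\epsilon_\cF$ and $\epsilon_\cW$, and the localization universal property of \pref{thm:5.9} transported to $\cC at^s_{A_\infty}$ via \pref{eq:A} --- but it packages them differently. The paper's own treatment is the paragraph preceding \pref{lem:canonical}: it constructs a morphism $\cF^{\text{univ}}_{w \cS}[C^{-1}_{\text{univ}}] \to \cW_{w \cS, C}$ (cone out of the colimit for $\epsilon_\cF$ inverts $C_{\text{univ}}$, then universality of localization) and a morphism back (the maps $\cF_E[C^{-1}_E] \to \cF^{\text{univ}}_{w \cS}[C^{-1}_{\text{univ}}]$ induce one out of the colimit for $\epsilon_\cW$), and then asserts the equivalence, leaving implicit that the two maps are mutually inverse. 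Your corepresentability argument supplies precisely that missing verification: both sides corepresent, naturally in $\cD$, the subfunctor of $\varprojlim_{E} \hom_{\cC at^s_{A_\infty}}(\cF_E, \cD)$ spanned by families inverting the $C_E$, the identifications being pullbacks along the very morphisms of the paper, so Yoneda forces them to be inverse equivalences; what the paper's formulation buys instead is brevity and explicit comparison functors used later in the proof of the comparison theorem. Two remarks on the step you flag as the main obstacle. The interchange of essential images with the limit is unproblematic: each pullback along a localization is an inclusion of connected components, hence pulled back along $\pi_0$, so a point of $\varprojlim_E \hom(\cF_E, \cD)$ lies in the limit of these subspaces if and only if each of its projections inverts $C_E$; the non-filteredness of $\bfE_{w \cS}$ plays no role here. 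Moreover, since $C_{\text{univ}}$ is by definition the set determined by the images of the $C_E$ under $H^0 \cF_E \to H^0 \cF^{\text{univ}}_{w \cS}$, and inverting a morphism in $\cD$ is a property of its cohomology class, both implications of your equivalence of conditions are immediate (your labelling of which direction needs which containment is swapped, but harmlessly so).
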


More explicitly,
the wrapped Fukaya category
$\cW_{w \cS, C}$
is the localization of the canonical envelope
$\cF_{w \cS, \delta}$
with respect to
$\delta$
for
$\cF^{\text{pre}}_{w \cS}$
along continuation maps.

\begin{lemma} \label{lem:bridge1}
Any compatible collection
$\delta$
of Floer data for
$w \cS$
canonically defines a contractible edge
$\cW_{E_\delta, C} \to \cW_{w \cS, C}$
in
$\cC at^s_{A_\infty}$.
\end{lemma}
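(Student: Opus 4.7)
The plan is to construct the edge $\cW_{E_\delta, C} \to \cW_{w \cS, C}$ as a two-step composite and then verify independence from auxiliary choices.

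First I would observe that every $0$-entanglement $E_0 \in \bfE_{w \cS}$ contains $E_\delta$ as a sub-decorated-semisimplicial-set, since by definition a $0$-entanglement starts from $\bigsqcup_{\delta'} E_{\delta'}$ and only adds further simplices. Fixing any such $E_0$, the inclusion $E_\delta \hookrightarrow E_0$ induces a fully faithful strictly unital $A_\infty$-functor $\cF_{E_\delta} \hookrightarrow \cF_{E_0}$ sending $C_{E_\delta}$ into $C_{E_0}$, and hence by the universal property of localization (\pref{thm:5.9}) descends to a morphism $\cW_{E_\delta, C} \to \cW_{E_0, C}$ in $\cC at^s_{A_\infty}$. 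Composing with the colimit cocone $\cW_{E_0, C} \to \cW_{w \cS, C}$ then supplies a candidate edge. Equivalently, one can package $\cF_{E_\delta} \hookrightarrow \cF_{E_0} \to \cF^{\text{univ}}_{w \cS}$ into a single morphism to the universal envelope and invoke \pref{lem:canonical} to localize.

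Next I would argue canonicality up to contractible choice. Given two $0$-entanglements $E_0, E'_0$ both containing $E_\delta$, I would form a $1$-entanglement $E_1$ whose constituent $0$-entanglements are $E_0$ and $E'_0$. Both $E_0 \hookrightarrow E_1$ and $E'_0 \hookrightarrow E_1$ are morphisms in $\bfE_{w \cS}$, so the cocone of the colimit defining $\cW_{w \cS, C}$ supplies coherent homotopies between the two candidate composites and the common composite $\cW_{E_\delta, C} \to \cW_{E_1, C} \to \cW_{w \cS, C}$. Iterating with higher entanglements organizes all such comparisons into a contractible simplicial space of choices, making the edge well-defined.

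The main obstacle is precisely this coherence step: since $\bfE_{w \cS}$ is not filtered, I must verify that the various factorizations through different $0$-entanglements assemble into a single homotopy class rather than producing genuinely distinct morphisms in $\cW_{w \cS, C}$. This requires careful bookkeeping at the level of the induced $A_\infty$-functors, using the explicit structure of morphisms in $\bfE_{w \cS}$ and the fact that the added edges at each stage come equipped with compatible Floer data chosen inductively via datum (v) of Definition \ref{dfn:wcS}.
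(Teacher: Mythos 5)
Your construction of the candidate edge (factor through an entanglement containing $E_\delta$, compose with the colimit cocone) is the right general shape, but the canonicity step --- which is the entire content of the lemma --- has a genuine gap, and your sketch of how to close it does not work as stated. Given two $0$-entanglements $E_0, E'_0$ and a $1$-entanglement $E_1$ built from $E_0 \sqcup E'_0$, the two composites $\cW_{E_\delta, C} \to \cW_{E_0, C} \to \cW_{E_1, C}$ and $\cW_{E_\delta, C} \to \cW_{E'_0, C} \to \cW_{E_1, C}$ are \emph{not} the same functor: they send the vertices of $E_\delta$ to the two disjoint copies of $E_\delta$ sitting inside $E_0$ and $E'_0$ respectively (and since $(L, L) \notin \cL_1$ by Definition \ref{dfn:wcS}, no edges are even added between corresponding vertices of these two copies). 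So there is no ``common composite'' for the cocone to mediate between, and since $E_\delta$ itself is not an object of $\bfE_{w \cS}$ (every $0$-entanglement contains all of $\bigsqcup_{\delta'} E_{\delta'}$ plus added simplices), the universal property of the colimit gives no information at all about morphisms out of $\cW_{E_\delta, C}$. The identification of the two routes has to be proved by hand, at the level of the localized categories.

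What actually makes that identification possible --- and what is entirely absent from your proposal --- is that all the relevant functors are quasi-equivalences. The paper's proof shows: (a) each transition functor $\cW_{E_n, C} \hookrightarrow \cW_{E_{n+1}, C}$, and likewise $\cW_{E_\delta, C} \to \cW_{E_n, C}$, is fully faithful on cohomology, because a cofinal wrapping sequence for a vertex of $E_n$ (which exists by Lemma \ref{lem:wrapping} and condition (vi), and computes wrapped homs by the right locality property together with Theorem \ref{thm:5.14}) remains cofinal in the continuation slice category of the larger entanglement, so the colimits computing $H\cW(p, q)$ do not change; and (b) it is essentially surjective, because by condition (v) of Definition \ref{dfn:wrapped} every vertex of $E_{n+1}$ receives a continuation map from some $L^w \neq L$ with $(L^w, L) \in \cL_1$, which already labels a vertex of $E_n$ and becomes an isomorphism after localization. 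Only with these quasi-equivalences in hand can one compare the different embeddings by mutually inverse natural equivalences compatibly with morphisms of entanglements, conclude that every cocone leg $\cW_{E_n, C} \to \cW_{w \cS, C}$ is a quasi-equivalence despite $\bfE_{w \cS}$ not being filtered, and thereby get a homotopically unique edge. You correctly flag this coherence issue as the main obstacle, but the wrapping--cofinality argument and the use of condition (v) are precisely the missing ideas needed to resolve it, so the proof as proposed is incomplete.
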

\begin{proof}
Let
$E_n \to E_{n+1}$
be any morphism of entanglements.
The functor
$\epsilon_\cF$
sends it to a naive inclusion
$\cF_{E_n} \hookrightarrow \cF_{E_{n+1}}$.
For every
$p_n, q_n \in E_n$
with
$\xi_n(p_n) = L_{p_n}, \xi_n(q_n) = L_{q_n}$,
the induced map
\begin{align*}
\varinjlim_{p^w_n \in 
((H^0 \cF_{E_n})_{/_{C_{E_n}}} p_n)^{op}} H \cF_{E_n}(p^w_n, q_n)
\to
\varinjlim_{p^w_n \in 
((H^0 \cF_{E_{n+1}})_{/_{C_{E_{n+1}}}} p_n)^{op}}H \cF_{E_{n+1}} (p^w_n, q_n)
\end{align*}
is the identity
$H \cW_{E_n, C}(p_n, q_n) = H \cW_{E_{n+1}, C}(p_n, q_n)$.
Indeed,
let
$L_{p_n} = L^{w_0}_{p_n} \leadsto L^{w_1}_{p_n} \leadsto \cdots$
be the image of the cofinal functor
$\bZ_{\geq 0} \to R_{L_{p_n}}$,
which corresponds to a sequence
$\cdots \xrightarrow{c_2} L^{w_1}_{p_n} \xrightarrow{c_1} L^{w_0}_{p_n}$
in
$H^0 \cF_{w \cS}$.
By construction of entanglements,
it comes from a cofinal sequence
$p_n = p^{w_0}_n < p^{w_1}_n < \cdots$
in
$((H^0 \cF_{E_n})_{/_{C_{E_n}}} p_n)^{op}$,
which in turn is also a cofinal sequence in
$((H^0 \cF_{E_{n+1}})_{/_{C_{E_{n+1}}}} p_n)^{op}$.
Then it follows
\begin{align*}
H \cW_{E_n, C}(p_n, q_n) = \varinjlim_{L^w_{p_n} \in R_{L_{p_n}}} H \cF_{w \cS}(L^w_{p_n}, L_{q_n}) = H \cW_{E_{n+1}, C}(p_n, q_n).
\end{align*}
Hence
$\epsilon_\cW$
sends
$E_n \to E_{n+1}$
to a naive inclusion
$\cW_{E_n, C} \hookrightarrow \cW_{E_{n+1}, C}$.
Let
$p_{n+1} \in E_{n+1}$
be any vertex
whose image
$\xi_{n+1}(p_{n+1}) = L_{p_{n+1}}$
is not a zero object.
By condition
(v)
in Definition
\pref{dfn:wrapped},
there exists at least one map
$c_{L^w_{p_{n+1}}, L_{p_{n+1}}} \colon L^w_{p_{n+1}} \to L_{p_{n+1}}$
in
$C$
with
$(L^w_{p_{n+1}}, L_{p_{n+1}}) \in \cL_1$.
Then by construction of
$E_n$
one always finds some
$p^w_n \in E_n$
with
$\xi_n(p^w_n) = L^w_{p_{n+1}}$.
In particular,
via the map
$c_{p^w_n, p_{n+1}} \colon p^w_n \to p_{n+1}$
in
$C_{E_{n+1}}$
corresponding to
$c_{L^w_{p_n}, L_{p_{n+1}}}$,
two vertices
$p^w_n, p_{n+1}$
become isomorphic in
$\cW_{E_{n+1}, C}$.
Hence the naive inclusion
$\cW_{E_n, C} \hookrightarrow \cW_{E_{n+1}, C}$
is a quasi-equivalence.
By the same argument as above,
one can show that
there is a canonical quasi-equivalence
$\cW_{E_\delta, C} \to \cW_{E_n, C}$
whenever we make a choice of an inclusion into some $n$-entanglement
$E_n$.
Between two such inclusions
$E_\delta \subset E_n, E_\delta \subset E^\prime_n$,
we have the naive natural transformations
which are mutually inverse.
This is compatible with any morphism of entanglements.
It follows that
the canonical functor
$\cW_{E_n, C} \to \cW_{w \cS, C}$
is a quasi-equivalence for all $n$-entanglements.
Hence we obtain a homotopically unique contractible edge
$\cW_{E_\delta, C} \to \cW_{w \cS, C}$
in
$\cC at^s_{A_\infty}$.
\end{proof}

\subsection{Functoriality}
\begin{definition}[cf. {\cite[Eq. (2.19)]{GPS2}}] \label{dfn:morphism}
A
\emph{morphism of weak abstract wrapped Floer setups}
$(w \cS, C) \to (w \cS^\prime, C^\prime)$
is a commutative diagram
\begin{align*}
\begin{gathered}
\xymatrix{
\cF^{\text{pre}}_{w \cS} \ar@{^{(}->}[d] & C \ar@{^{(}->}[r] \ar@{^{(}->}[d] & H^0 \cF_{w \cS} \ar@{^{(}->}[d] \\
\cF^{\text{pre}}_{w \cS^\prime} & C^\prime \ar@{^{(}->}[r] & H^0 \cF_{w \cS^\prime},
}
\end{gathered}
\end{align*}
where
the horizontal arrows are canonical ones
and
the vertical arrows are given as follows.
\begin{itemize}
\item
The morphism
$\cF^{\text{pre}}_{w \cS} \hookrightarrow \cF^{\text{pre}}_{w \cS^\prime}$
is an inclusion of the associated semisimplicial sets covered by identifications of the graded modules for composable pairs compatible with $A_\infty$-operations.
\item 
The morphism
$H \cF_{w \cS}
\hookrightarrow
H \cF_{w \cS^\prime}$
is canonically induced by
$\cF^{\text{pre}}_{w \cS} \hookrightarrow \cF^{\text{pre}}_{w \cS^\prime}$.
\item 
The morphism
$C \hookrightarrow C^\prime$
is an inclusion with
$C^{\prime} |_{H^0 \cF_{w \cS}} = C$.
\end{itemize}
\end{definition}

Any morphism
$(w \cS, C) \to (w \cS^\prime, C^\prime)$
of weak abstract wrapped Floer setups induces a canonical naive inclusion
$\cW_{E_\delta, C} \hookrightarrow \cW_{E^\prime_{\delta^\prime}, C^\prime}$
for each canonical
decorated semisimplicial set 
$E^\prime_{\delta^\prime}$
with respect to
$\delta^\prime$
for
$w \cS^\prime$,
where
$\delta^\prime$
is a compatible collection of Floer data extending
$\delta$.
Between two such inclusions
$\cW_{E_\delta, C} \hookrightarrow \cW_{E^\prime_{\delta^\prime}, C^\prime}, \cW_{E_\delta, C} \hookrightarrow \cW_{E^{\prime}_{\delta^{\prime \prime}}, C^\prime}$,
we have the naive natural transformations
which are mutually inverse.
This is compatible with the contractible edges
$\cW_{E^\prime_{\delta^\prime}, C^\prime} \to \cW_{w \cS^\prime, C^\prime},  \cW_{E^{\prime}_{\delta^{\prime \prime}}, C^\prime} \to \cW_{w \cS^\prime, C^\prime}$
from
\pref{lem:bridge1}.
Hence we obtain a homotopically unique edge
$\cW_{E_\delta, C} \to \cW_{w \cS^\prime, C^\prime}$
in
$\cC at^s_{A_\infty}$,
which in turn induces via
\pref{lem:bridge1}
an edge
$\cW_{w \cS, C} \to \cW_{w \cS^\prime, C^\prime}$.
Since this is compatible with composition,
we obtain an $\infty$-functor
\begin{align} \label{eq:functorial}
N(\operatorname{wAWFS}) \to \cC at^s_{A_\infty}, \ (w \cS, C) \mapsto \cW_{w \cS, C}
\end{align}
for the ordinary category
$\operatorname{wAWFS}$
of weak abstract wrapped Floer setups.

\subsection{Proof of \pref{thm:main}}
As a summary of this section,
we write down a proof of
\pref{thm:main}.
The colimit
$\cF^{\text{univ}}_{w \cS}$
for the $\infty$-functor
$\epsilon_\cF \colon N(\bfE_{w \cS}) \to \cC at^s_{A_\infty}$
is a cofibrant strictly unital $A_\infty$-category.
Since it is determined by only
$w \cS$,
we obtain the assignments
\pref{eq:assignF}.

(i)
A weak abstract wrapped Floer setup
$(w \cS, C)$
determines
the category
$\bfE_{w \cS}$
of entanglements for
$w \cS$
and
the compatible subsets
$C_E \subset H^0 \cF_E$
for all
$E \in \bfE_{w \cS}$.
By definition of
$\cF^{\text{univ}}_{w \cS}$
the latter uniquely determine a subset
$C_{\text{univ}} \subset H^0 \cF^{\text{univ}}_{w \cS}$.
Hence the assignments
\pref{eq:assignF}
extend to ones
\pref{eq:assignW}.

(ii)
Via
\pref{lem:canonical}
the assignments
\pref{eq:assignW}
define an $\infty$-functor
\pref{eq:functorial}.

(iii)
Fix a compatible collection
$\delta$
of Floer data for
$w \cS$.
By
\pref{lem:cohomology}
there is a canonical equivalence
$H \cW_{E_\delta, C} \xrightarrow{\sim} H \cW_{\DF, w \cS, C}$.
From the proof of
\pref{lem:bridge1}
we obtain a canonical equivalence
$H \cW_{E_\delta, C} \xrightarrow{\sim} H \cW_{w \cS, C}$.
Let
$\delta^\prime$
be any compatible collection of Floer data for
$w \cS^\prime$
extending
$\delta$.
Via the canonical equivalences,
the inclusion
$H \cW_{E_\delta, C} \hookrightarrow H \cW_{E^\prime_{\delta^\prime}, C^\prime}$
induces ones
$H \cW_{w \cS, C} \hookrightarrow H \cW_{w \cS^\prime, C^\prime}, \
H \cW_{\DF, w \cS, C} \hookrightarrow H \cW_{\DF, w \cS^\prime, C^\prime}$.
From construction of the $\infty$-functor
\pref{eq:functor},
it follows that
$H \cW_{E_\delta, C} \hookrightarrow H \cW_{E^\prime_{\delta^\prime}, C^\prime}$
and
$H \cW_{w \cS, C} \hookrightarrow H \cW_{w \cS^\prime, C^\prime}$
respectively coincide with the cohomology of
the inclusion
$\cW_{E_\delta, C} \hookrightarrow \cW_{E^\prime_{\delta^\prime}, C^\prime}$
and
the edge
$\cW_{w \cS, C} \to \cW_{w \cS^\prime, C^\prime}$.

\section{Proof of \pref{thm:comparison}}
Let
$(w \cS, C)$
be
a weak abstract wrapped Floer setup
and 
$(\cS, H \cF^{\text{env}}_\cS, \tilde{C}, \{ \tilde{R}_L \}_L)$
an abstract wrapped Floer setup with
$\tilde{C} |_{H^0 \cF_{w \cS}} = C$.
For any compatible collection
$\delta$
of Floer data for
$w \cS$,
consider all
$P_\delta \in \Pos_\cS$
which are sufficiently wrapped with respect to
$(\cS, H \cF^{\text{env}}_\cS, \tilde{C}, \{ \tilde{R}_L \}_L)$
and
whose associated Floer data can be obtained from
$\delta$.
Each 
$P_\delta$
defines a canonical strictly unital $A_\infty$-functor
\begin{align*}
\iota_{P_\delta} \colon \cO_{P_\delta} \to \cF_{E_{\delta}}
\end{align*}
sending a vertex to the one corresponding to the same Lagrangian in
$\cL$.
Note that
in general there is no $A_\infty$-functor of opposite direction.
Since
$\iota_{P_\delta}$
sends
$I_{P_\delta, \tilde{C}}$
to
$C_{E_\delta}$,
it induces a canonical strictly unital $A_\infty$-functor
\begin{align*}
\tau_{P_\delta} \colon \cW_{P_\delta, \tilde{C}} \to \cW_{E_\delta, C}.
\end{align*}
Let
$P_{\text{univ}, \delta}(\cS)$
be the filtered union of all
$P_\delta$
with respect to downward closed inclusions.
Then
$\iota_{P_\delta}$
for all
$P_\delta$
induce
\begin{align} \label{eq:iota}
\iota_\delta \colon \cO_{P_{\text{univ}, \delta}(\cS)} \to \cF_{E_\delta},
\end{align}
which in turn induces
\begin{align} \label{eq:tau}
\tau_\delta \colon \cW_{P_{\text{univ}, \delta}(\cS), \tilde{C}} \to \cW_{E_\delta, C}.
\end{align}

\begin{lemma} \label{lem:bridge2}
The strictly unital $A_\infty$-functor
\pref{eq:tau}
is a quasi-equivalence.
\end{lemma}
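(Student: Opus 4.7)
The plan is to verify that $\tau_\delta$ is surjective on objects and induces a quasi-isomorphism on each morphism complex, and hence is a quasi-equivalence of strictly unital $A_\infty$-categories.

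For surjectivity on objects, every $L \in \cL$ already appears as the vertex of $E_\delta$ labeled by $L$, and we need to exhibit $p \in P_{\text{univ}, \delta}(\cS)$ with $\tau_\delta(p) = L$. Starting from the singleton decorated poset $\{ p_L \}$ labeled by $L$ (which lies in $\Pos_\cS$ trivially), one applies \pref{lem:2.18} to embed it into a countable cofinite decorated poset $P_\delta \in \Pos_\cS$ which is sufficiently wrapped with respect to $(\cS, H\cF^{\text{env}}_\cS, \tilde{C}, \{\tilde{R}_L\}_L)$. By datum (v) in Definition~\pref{dfn:wcS}, all Floer data required in the construction can be chosen from $\delta$, so $P_\delta$ contributes to $P_{\text{univ}, \delta}(\cS)$ and $p_L \in P_{\text{univ}, \delta}(\cS)$. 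Since $\iota_\delta$ sends $p_L$ to the vertex $L$ of $E_\delta$, we have $\tau_\delta(p_L) = L$ on the nose.

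For quasi-full-faithfulness, fix $p, q \in P_{\text{univ}, \delta}(\cS)$. Sufficient wrappedness provides a $P_\delta$-wrapping sequence $p = p_0 < p_1 < \cdots$ through $p$ in some $P_\delta \subset P_{\text{univ}, \delta}(\cS)$, and this serves equally as a sequence for both the $\cO_{P_{\text{univ}, \delta}(\cS)}$-side and its image $\cdots \to L_{p_1} \to L_{p_0}$ on the $\cF_{E_\delta}$-side. I would apply \pref{thm:5.14} to both $\cO_{P_{\text{univ}, \delta}(\cS)}$ with localizing set $I_{P_{\text{univ}, \delta}(\cS), \tilde{C}}$ and to $\cF_{E_\delta}$ with localizing set $C_{E_\delta}$. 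In either case the hypothesis---that for every representative of a continuation map $K \to K'$ the induced map $\varinjlim_i \cA(p_i, K) \to \varinjlim_i \cA(p_i, K')$ is a quasi-isomorphism---reduces on cohomology, via the wrapping sequence property
\begin{align*}
\varinjlim_i H\cF^{\text{env}}_\cS(L_{p_i}, K) \xrightarrow{\sim} HW_{\cS, \tilde{C}}(L_p, K),
\end{align*}
to the right-locality isomorphism $HW_{\cS, \tilde{C}}(L_p, K) \xrightarrow{\sim} HW_{\cS, \tilde{C}}(L_p, K')$; since filtered colimits commute with cohomology, this lifts to the required chain-level quasi-isomorphism.

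Combining, \pref{thm:5.14} yields quasi-isomorphisms
\begin{align*}
\varinjlim_i \hom_{\cO_{P_{\text{univ}, \delta}(\cS)}}(p_i, q) \xrightarrow{\sim} \hom_{\cW_{P_{\text{univ}, \delta}(\cS), \tilde{C}}}(p, q), \quad \varinjlim_i \hom_{\cF_{E_\delta}}(L_{p_i}, L_q) \xrightarrow{\sim} \hom_{\cW_{E_\delta, C}}(L_p, L_q),
\end{align*}
where the outer identifications use that each map $p_i \to p_0$ (resp.\ $L_{p_i} \to L_{p_0}$) becomes an isomorphism after localization, so the filtered colimit of quasi-isomorphic complexes recovers $\hom(p, q)$. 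The two left-hand colimits are identified termwise by $\iota_\delta$, since both $\cO_{P_{\text{univ}, \delta}(\cS)}(p_i, q)$ and $\cF_{E_\delta}(L_{p_i}, L_q)$ are just $CF(L_{p_i}, L_q)$ equipped with the same $\delta$-Floer data. Hence $\tau_\delta$ induces a quasi-isomorphism on $\hom$, as desired.

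The main obstacle is the bridging step between the two setups when verifying the hypothesis of \pref{thm:5.14}: the wrapping sequence property is phrased in terms of $H\cF^{\text{env}}_\cS$ and $HW_{\cS, \tilde{C}}$ coming from the original abstract wrapped Floer setup, whereas the instance of \pref{thm:5.14} on the $E_\delta$-side must be applied relative to continuation maps in $C \subset H^0\cF_{w\cS}$. The compatibility $\tilde{C}|_{H^0\cF_{w\cS}} = C$ and the identification $H\cF_{w\cS} \subset H\cF^{\text{env}}_\cS$ (equal on composable pairs of $w\cS$) are precisely what force the two colimits to agree and transport the right-locality argument cleanly from the $\cS$-setup to the $w\cS$-setup.
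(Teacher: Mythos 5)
Your overall toolkit (wrapping sequences, right locality, \pref{thm:5.14}) is the same as the paper's, and your essential-surjectivity argument is acceptable, but the full-faithfulness step has a genuine gap: you invoke \pref{thm:5.14} directly for $\cA = \cO_{P_{\text{univ},\delta}(\cS)}$ with localizing set $I_{P_{\text{univ},\delta}(\cS),\tilde{C}}$, using a sequence $p = p_0 < p_1 < \cdots$ that is cofinal only in one sufficiently wrapped subposet $P_\delta$, not in the universal poset. The hypothesis of \pref{thm:5.14} quantifies over all representatives $r \to r'$ of all elements of $I_{P_{\text{univ},\delta}(\cS),\tilde{C}}$, and in the universal poset one has $r < p_i$ only when $r \in P_\delta^{\leq p_i}$ (the inclusions are downward closed), so $\cO_{P_{\text{univ},\delta}(\cS)}(p_i,r) = 0$ for every $i$ whenever $r \notin P_\delta$. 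Hence your identification of $\varinjlim_i H\cO(p_i,K)$ with $\varinjlim_i H\cF^{\text{env}}_\cS(L_{p_i},L_K)$ breaks down for such $K$, and the hypothesis itself actually fails: taking a continuation element $r \to r'$ with $r' = p$ but with $r$ realized inside a different sufficiently wrapped poset (such $r$ exist in $P_{\text{univ},\delta}(\cS)$, e.g.\ via \pref{lem:2.18}), the induced map $\varinjlim_i \cO(p_i,r) \to \varinjlim_i \cO(p_i,r')$ is the zero map into a complex whose cohomology is $HW_{\cS,\tilde{C}}(L_p,L_p) \neq 0$. So \pref{thm:5.14} cannot be applied at the level of the universal poset with your sequence.

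This is precisely why the paper argues in two stages. First it shows each $\tau_{P_\delta} \colon \cW_{P_\delta,\tilde{C}} \to \cW_{E_\delta,C}$ is fully faithful: there the wrapping sequence is cofinal in $P_\delta$, so the colimit comparison $\varinjlim_i H\cW_{P_\delta,\tilde{C}}(p_i,q) \cong \varinjlim_i H\cO_{P_\delta}(p_i,q) = \varinjlim_i H\cF_{E_\delta}(\iota_{P_\delta}(p_i),\iota_{P_\delta}(q))$ is legitimate (via \pref{lem:2.16}), and \pref{thm:5.14} is applied only on the $E_\delta$-side after checking, using $\tilde{C}|_{H^0\cF_{w\cS}} = C$ and \pref{lem:wrapping}, that the image sequence is cofinal in $R_{L_p}$. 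Then it deduces full faithfulness of $\tau_\delta$ by writing $P_{\text{univ},\delta}(\cS)$ as the filtered union of the $P_\delta$ along downward closed inclusions, using the compatibility of the $\tau_{P_\delta}$ under such inclusions together with \pref{lem:2.17}. Your proposal is missing this passage-to-the-colimit step, and without it the direct application to $\cO_{P_{\text{univ},\delta}(\cS)}$ is unjustified. A smaller point: on the $\cF_{E_\delta}$-side the wrapping-sequence property concerns $H\cF^{\text{env}}_\cS$, whereas $\cF_{E_\delta}(L_{p_i},K)$ vanishes unless $(L_{p_i},K)$ is composable in $w\cS$, so the termwise identification you assert needs the bridge through cofinality in $R_{L_p}$ rather than a literal equality of colimit terms.
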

\begin{proof}
First,
we show that
the induced maps
\begin{align} \label{eq:want}
H \cW_{P_\delta, \tilde{C}}(p, q) \to H \cW_{E_\delta, C}(\tau_{P_\delta}(p), \tau_{P_\delta}(q))
\end{align}
by
$\tau_{P_\delta}$
are isomorphisms for all
$p, q \in \cW_{P_\delta, \tilde{C}}$.
Let
$p = p_0 < p_1 < \cdots$
be a $P_\delta$-wrapping sequence of
$p$.
From the proof of
\pref{lem:2.18}
one sees that
it corresponds to some cofinal sequence
$L_p = L_{p_0} \leadsto L_{p_1} \leadsto$
in
$\tilde{R}_{L_p}$.
Note that
$L_p = L_{p_0} \leadsto L_{p_1} \leadsto \cdots$
in turn corresponds to a sequence
$\cdots L_{p_1} \to L_{p_0} = L_p$
of maps in
$I_{P_\delta, \tilde{C}}$.
By
$\tilde{C} |_{H^0 \cF_{w \cS}} = C$,
$\iota_{P_\delta}(I_{P_\delta, \tilde{C}}) \subset C_{E_\delta}$
and
\pref{lem:wrapping}
the cofinal sequence defines one in
$R_{L_p}$.
Since we have
\begin{align*}
\cF_{E_\delta}(\iota_{P_\delta}(p), \iota_{P_\delta}(q)) = CF(L_p, L_q) = \cO_{P_\delta}(p, q)
\end{align*}
as cochain complexes for
$p > q$,
the maps
\pref{eq:want}
factor through the composition of isomorphisms
\begin{align*}
\varinjlim_i H \cW_{P_\delta, \tilde{C}}(p_i, q)
\xrightarrow{\sim}
\varinjlim_i H \cO_{P_\delta}(p_i, q)
=
\varinjlim_i H \cF_{E_\delta}(\iota_{P_\delta}(p_i), \iota_{P_\delta}(q)).
\end{align*}
Here,
the first isomorphism follows from
\pref{lem:2.16}.
Combining with
\pref{thm:5.14},
one sees that
$\tau_{P_\delta}$
is fully faithful.
If there is a downward closed inclusion
$P_\delta \subset P^\prime_\delta$,
then the composition of
$\tau_{P^\prime_{\delta^\prime}}$
with the naive inclusion
$\cW_{P_\delta, \tilde{C}} \hookrightarrow \cW_{P^\prime_{\delta^\prime}, \tilde{C}}$
coincides with
$\tau_{P_\delta}$.
From
\pref{lem:2.17}
it follows that
$\tau_{\delta}$
is fully faithful,
as taking
colimit
and
directed categories
intertwine.
Next,
we show that
$\tau_\delta$
is essentially surjective.
For all object
$p \in \cW_{E_\delta, C}$
consider any wrapping sequence of
$p$
obtained by using Floer data from
$\delta$.
Such a wrapping sequence defines an object
$P_{p, \delta} \in \Pos_\cS$
which are sufficiently wrapped.
Then we have
$\tau_{P_{p, \delta}}(p) = p$.
\end{proof}

Choose an inclusion
$E_\delta \subset E$
to some entanglement.
Then
\pref{eq:iota}
extends to a strictly unital $A_\infty$-functor
\begin{align} \label{eq:iotabar}
\bar{\iota}_\delta \colon \cO_{P_{\text{univ}, \delta}(\cS)} \to \cF_{E_\delta} \to \cF^{\text{univ}}_{w \cS}.
\end{align}
Under the choice
$E_\delta \subset E$,
via the canonical quasi-equivalence
$\cF_{E_\delta}[C^{-1}_{E_\delta}] \to \cF_E[C^{-1}_E]$
it induces a quasi-equivalence
\begin{align} \label{eq:taubar}
\bar{\tau}_\delta \colon \cW_{P_{\text{univ}, \delta}(\cS), \tilde{C}} \to \cW_{E_\delta, C} \to \cF^{\text{univ}}_{w \cS}[C^{-1}_{\text{univ}}]
\end{align}
extending
\pref{eq:tau}.
In other words,
the localizations
$\cO_{P_{\text{univ}, \delta}(\cS)} \to \cW_{P_{\text{univ}, \delta}(\cS), \tilde{C}}, \cF^{\text{univ}}_{w \cS} \to \cF^{\text{univ}}_{w \cS}[C^{-1}_{\text{univ}}]$
fit into a commutative diagram in
$A_\infty Cat^s$
\begin{align*}
\begin{gathered}
\xymatrix{
\cO_{P_{\text{univ}, \delta}(\cS)} \ar^{\bar{\iota}_\delta}[r] \ar[d] & \cF^{\text{univ}}_{w \cS} \ar[d] \\
\cW_{P_{\text{univ}, \delta}(\cS), \tilde{C}} \ar^{\bar{\tau}_\delta}[r] & \cF^{\text{univ}}_{w \cS}[C^{-1}_{\text{univ}}].
}
\end{gathered}
\end{align*}

On the other hand,
by
\pref{thm:5.9}
the localizations satisfy the universal property in the sense of
\cite[Prop. 5.1(a)]{OT25}.
Since all $A_\infty$-categories are cofibrant (strictly) unital,
this means that
for any unital $A_\infty$-category
$\cD$
the pullbacks along the localizations
\begin{align*}
\Fun^u_{A_\infty}(\cW_{P_{\text{univ}, \delta}(\cS), \tilde{C}}, \cD) \to \Fun^u_{A_\infty}(\cO_{P_{\text{univ}, \delta}(\cS)}, \cD), \
\Fun^u_{A_\infty}(\cF^{\text{univ}}_{w \cS}[C^{-1}_{\text{univ}}], \cD) \to \Fun^u_{A_\infty}(\cF^{\text{univ}}_{w \cS}, \cD)
\end{align*}
are fully faithful
and
their essential images consist of those functors
which send all representatives of any elements in 
$I_{P_{\text{univ}, \delta}(\cS), \tilde{C}}, C_{\text{univ}}$
to
isomorphisms in
$\cD$.
Hence we obtain a commutative diagram
\begin{align*}
\begin{gathered}
\xymatrix{
\Fun^u_{A_\infty}(\cF^{\text{univ}}_{w \cS}, \cD) \ar^{\bar{\iota}^*_\delta}[r] & \Fun^u_{A_\infty}(\cO_{P_{\text{univ}, \delta}(\cS)}, \cD) \\
\Fun^u_{A_\infty}(\cF^{\text{univ}}_{w \cS}[C^{-1}_{\text{univ}}], \cD) \ar^{\bar{\tau}^*_\delta}[r] \ar@{^{(}-_{>}}[u] & \Fun^u_{A_\infty}(\cW_{P_{\text{univ}, \delta}(\cS), \tilde{C}}, \cD) \ar@{^{(}-_{>}}[u]
}
\end{gathered}
\end{align*}
in
$A_\infty Cat^u$,
where the bottom horizontal arrow is a quasi-equivalence by
\pref{lem:3.46}.

By
\pref{lem:lift}
the inverse
$H \cF^{\text{univ}}_{w \cS}[C^{-1}_{\text{univ}}] \to H \cW_{P_{\text{univ}, \delta}(\cS), \tilde{C}}$
of
$H \bar{\tau}_\delta$
admits a lift to a unital $A_\infty$-functor
$\cF^{\text{univ}}_{w \cS}[C^{-1}_{\text{univ}}] \to \cW_{P_{\text{univ}, \delta}(\cS), \tilde{C}}$,
which is unique up to homotopy.
We denote by
$\iota \colon \cF^{\text{univ}}_{w \cS} \to \cW_{P_{\text{univ}, \delta}(\cS), \tilde{C}}$
the composition with the localization.
The pullback 
\begin{align*}
\iota^* \colon \Fun^u_{A_\infty}(\cW_{P_{\text{univ}, \delta}(\cS), \tilde{C}}, \cD) \hookrightarrow \Fun^u_{A_\infty}(\cF^{\text{univ}}_{w \cS}, \cD) 
\end{align*}
is fully faithful
and
its essential image consists of those functors
which send all representatives of any elements in $C_{\text{univ}}$
to isomorphisms in
$\cD$.
By
\pref{lem:3.46}
the canonical quasi-equivalence
$\cW_{P_{\text{univ}, \delta}(\cS), \tilde{C}} \xrightarrow{\text{GPS}} \cW_{P_{\text{univ}(\cS)}, \tilde{C}}$
induces a quasi-equivalence
\begin{align*}
\Fun^u_{A_\infty}(\cW_{P_{\text{univ}(\cS)}, \tilde{C}} ,\cD) \xrightarrow{\text{GPS}^*} \Fun^u_{A_\infty}(\cW_{P_{\text{univ}, \delta}(\cS), \tilde{C}} ,\cD).
\end{align*}
Now,
the pullback
\begin{align*} 
\bar{\iota}^* \colon \Fun^u_{A_\infty}(\cW_{P_{\text{univ}}(\cS), \tilde{C}}, \cD) \hookrightarrow \Fun^u_{A_\infty}(\cF^{\text{univ}}_{w \cS}, \cD) 
\end{align*}
for
$\bar{\iota} = \text{GPS} \circ \iota$
gives a unital $A_\infty$-functor with universal property.
Choose a quasi-equivalence
$\cF^{\text{univ}}_{w \cS}[C^{-1}_\text{univ}] \to \cW_{w \cS, C}$
defining the homotopy class of
\pref{eq:canonical}.
Then one can run the same argument as above,
replacing
$\cF^{\text{univ}}_{w \cS}[C^{-1}_{\text{univ}}]$
with
$\cW_{w \cS, C}$.
In particular,
we obtain the desired quasi-equivalence
$\bar{\tau} \colon \cW_{P_{\text{univ}}(\cS), \tilde{C}} \to \cW_{w \cS, C}$
by composing a chosen inverse
$\text{GPS}^{-1}$
of
$\text{GPS}$.

\begin{remark}
Passing to
$\cC at^u_{A_\infty}$,
the above choices of
$E_\delta \subset E$,
$\cF^{\text{univ}}_{w \cS}[C^{-1}_\text{univ}] \to \cW_{w \cS, C}$,
$\tau^{-1}_\delta$
and
$\text{GPS}^{-1}$
become canonical.
This is because different choices of each define the same edge up to homotopy by
\pref{lem:canonical},
\pref{lem:bridge1}
and
the basic property of
$\cC at^s_{A_\infty}$.
\end{remark}


\end{document}